\documentclass[11pt]{article}

\usepackage{setspace}
\def\spacingset#1{\renewcommand{\baselinestretch}%
{#1}\small\normalsize} \spacingset{1}
\spacingset{1.5}

\newcommand{\tightdisplayskip}{%
  \setlength{\abovedisplayskip}{6pt plus 2pt minus 2pt}%
  \setlength{\belowdisplayskip}{6pt plus 2pt minus 2pt}%
}

\AtBeginEnvironment{equation}{\tightdisplayskip}
\AtBeginEnvironment{equation*}{\tightdisplayskip}

\newlength{\originaljot}
\AtBeginEnvironment{aligned}{%
  \setlength{\originaljot}{\jot}%
  \setlength{\jot}{2pt}%
}
\AtEndEnvironment{aligned}{%
  \setlength{\jot}{\originaljot}%
}

\usepackage{indentfirst,csquotes}
\usepackage{natbib}
\usepackage{color}
\usepackage{hyperref}

\usepackage{amsthm,amsmath,amssymb}
\usepackage{xcolor,paralist,hyperref,titlesec,fancyhdr,etoolbox}

\newtheoremstyle{myremark}% <name>
  {.5\baselineskip}% Space above
  {.5\baselineskip}% Space below
  {}% Body font
  {}% Indent amount
  {\itshape}% Theorem head font
  {.}% Punctuation after theorem head
  {.5em}% Space after theorem head
  {}% Theorem head spec

\theoremstyle{myremark}
\newtheorem{theorem}{Theorem}[]
\newtheorem{definition}{Definition}
\newtheorem{example}{Example}
\newtheorem{remark}{Remark}
\newtheorem{lemma}{Lemma}[]
\newtheorem{proposition}[theorem]{Proposition}
\newtheorem{corollary}[theorem]{Corollary}
\newtheorem{assumption}{Assumption}[]

\usepackage{authblk} % 
\newcommand{\keywords}[1]{\textbf{Keywords:} #1}

\usepackage{algorithm,algorithmic} % code
\usepackage{booktabs}
\usepackage{multirow}
\hypersetup{
    colorlinks=true,  
    linkcolor=blue,   
    citecolor=blue,   
    urlcolor=magenta, 
    pdfborder={0 0 0} 
}
\usepackage[table]{xcolor}
\usepackage{array}
\usepackage{makecell} 
\usepackage{multirow}

\definecolor{lightgreen}{RGB}{204, 255, 204}
\definecolor{lightpink}{RGB}{255, 204, 204}
\definecolor{lightyellow}{RGB}{255, 255, 204}

\newcommand{\pr}{{\rm pr}}
\newcommand{\sgn}{{\rm sgn}}

\newcommand{\fdr}{{\rm FDR}}
\newcommand{\fdp}{{\rm FDP}}
\newcommand{\fcr}{{\rm FCR}}

\newcommand{\fsr}{{\rm FSR}}

\newcommand{\ca}{{\rm \scriptscriptstyle cal}}
\newcommand{\tr}{{\rm \scriptscriptstyle tr}}
\newcommand{\te}{{\rm \scriptscriptstyle te}}
\newcommand{\mix}{{\rm \scriptscriptstyle mix}}
\newcommand{\cp}{{\rm \scriptscriptstyle CP}} % conformal prediction
\newcommand{\oc}{{\rm \scriptscriptstyle OC}} % conformal prediction
\newcommand{\jomi}{{\rm \scriptscriptstyle JOMI}} % JOint Mondrian Conformal Inference
\newcommand{\isp}{{\rm \scriptscriptstyle ISP}} % Informative selective conformal prediction
\newcommand{\ora}{{\rm \scriptscriptstyle ora}} % optimal

\newcommand{\ps}{{\rm \scriptscriptstyle GC}} % Pseudo
\newcommand{\jc}{{\rm \scriptscriptstyle JC}} % Jin and Candes (2023) selection by prediction
\newcommand{\mfcr}{{\rm mFCR}} % marginal FCR

\newcommand{\cpow}{{\rm cPOW}}

\newcommand{\bh}{{\rm \scriptscriptstyle bh}}
\newcommand{\pshm}{{\rm \scriptscriptstyle GC,hm}}

\newcommand{\cphm}{{\rm \scriptscriptstyle CP,hm}}
\newcommand{\homo}{{\rm \scriptscriptstyle hm}}

\usepackage{xspace, mathtools}
\newcommand{\infosp}{\texttt{InfoSP}\xspace}
\newcommand{\infoscop}{\texttt{InfoSCOP}\xspace}
\newcommand{\infospp}{\texttt{InfoSP+}\xspace}
\newcommand{\infosppp}{\texttt{InfoSP++}\xspace}
\newcommand{\cfbh}{\texttt{cfBH}\xspace}
\newcommand{\cfbhp}{\texttt{cfBH+}\xspace}
\newcommand{\cfbhpp}{\texttt{cfBH++}\xspace}
\newcommand{\byfcr}{\texttt{BY-FCR}\xspace}
\newcommand{\bhfdr}{\texttt{BH}\xspace}
\newcommand{\scip}{\texttt{SCIP}\xspace}

\newcommand{\E}{\mathbb{E}}
\newcommand{\bigmid}{\bigm|}
\newcommand{\rmd}{{\rm d}}

\newcommand{\cC}{\mathcal{C}}
\newcommand{\cD}{\mathcal{D}}

\newcommand{\cH}{\mathcal{H}}
\newcommand{\cI}{\mathcal{I}}

\newcommand{\cL}{\mathcal{L}}

\newcommand{\cR}{\mathcal{R}}
\newcommand{\cS}{\mathcal{S}}

\newcommand{\cX}{\mathcal{X}}
\newcommand{\cY}{\mathcal{Y}}

\newcommand{\bp}{\mathbf{p}}
\newcommand{\bP}{\mathbf{P}}

\newcommand{\bbI}{\mathbb{I}}

\newcommand{\bbR}{\mathbb{R}}

\newcommand{\hk}{\hat{k}}
\newcommand{\hY}{\widehat{Y}}

\newcommand{\htau}{\hat{\tau}}

\newcommand{\hfdp}{\widehat{\fdp}}

\newcommand{\hcL}{\widehat{\cL}}
\newcommand{\hmu}{\widehat{\mu}}
\newcommand{\hpr}{\widehat{\pr}}
\newcommand{\hnu}{\hat{\nu}}

\newcommand{\tp}{\tilde{p}}
\newcommand{\tq}{\tilde{q}}

\newcommand{\tW}{\widetilde{W}}

\newcommand{\tT}{\widetilde{T}}

\newcommand{\tnu}{\tilde{\nu}}

\newcommand{\barp}{\bar{p}}
\newcommand{\bA}{\bar{A}}

\newcommand{\iid}{{i.i.d.\@ }}

\usepackage{pifont}
\newcommand{\yesgreen}{{\cellcolor{lightgreen} \ding{51}}}
\newcommand{\nonopink}{{\cellcolor{lightpink} \ding{55}}}
\newcommand{\partyelw}{{\cellcolor{lightyellow}Partially}}

\begin{document}
\title{Conformalized Large-Scale Selective Inference with Informative and Trustworthy Prediction Sets}

\author[1]{Wangcheng Li}
\author[2]{Guanlan Zhao}
\author[1]{Xu Guo}
\author[2]{Wenguang Sun
\footnote{Email for correspondence: \href{Email:wgsun@zju.edu.cn}{wgsun@zju.edu.cn}}}

\affil[1]{School of Statistics, Beijing Normal University}

\affil[2]{Center for Data Science, Zhejiang University}

\maketitle

\begin{abstract}
    In large-scale prediction problems, exhaustively following up on all test units is often impractical and inefficient, motivating a selective reporting strategy that fulfills the dual requirements of informativeness and trustworthiness. Within the InfoFCR (Informative prediction with False Coverage Rate control) framework, we propose \scip (Selective Conformal Inference for Informative Predictions), a procedure built on three key components: (i) an \emph{informative set constructor} that tailors prediction sets to individual test units according to user‑specified informativeness constraints; (ii) a \emph{trust score} that provides a principled quantification of the trustworthiness of candidate informative sets; and (iii) \emph{generalized conformal p‑values} that are used to perform FCR analysis for selecting the most promising candidates. We establish that SCIP guarantees finite‑sample FCR control and is asymptotically anti‑conservative, achieving higher statistical power than existing methods. The framework is highly versatile, accommodating a wide range of error metrics across both regression and classification tasks. Extensive numerical experiments on simulated and real data demonstrate the effectiveness of our approach.
\end{abstract}
\keywords{
    conformal selection,
    false coverage rate,
    false selection rate,
    generalized conformal $p$-value,
    selective prediction
}

\newpage

\section{Introduction}

In large-scale predictive inference, particularly in genomics and drug discovery, researchers routinely screen thousands of candidates, rendering exhaustive reporting and follow-up on every test unit impractical and inefficient. To extract meaningful insights from such massive datasets, an effective strategy is to selectively identify candidates that are both scientifically relevant and statistically reliable, ensuring that limited resources are efficiently devoted to actionable and high-impact candidates. This selective prediction paradigm has recently attracted considerable attention \citep{jin2023selection, bao2024selective, gazin2025selecting, jin2025confidence}. Our objective is to develop conformal inference methods \citep{vovk2005algorithmic} that jointly guarantee (i) scientific relevance, by imposing strict informativeness criteria on every reported prediction set, and (ii) statistical trustworthiness, by controlling the false coverage rate (FCR; \citealp{benjamini2005false}) at a pre‑specified level.

\subsection{The InfoFCR framework}

We first introduce the InfoFCR framework \citep{gazin2025selecting}, which formalizes FCR control under informativeness constraints.  
Consider a random pair $(X,Y) \in \mathbb{R}^d \times \mathcal{Y}$ drawn from an unknown distribution $P_{XY}$, where $X$ is the feature vector and $Y$ is the associated response. We set $\mathcal{Y} = \mathbb{R}$ for regression and $\mathcal{Y} = [K]$ with $K \ge 2$ for classification. The labeled data are split into a training set $\mathcal{D}^{\mathrm{tr}}$ and a calibration set $\{(X_i, Y_i)\}_{i=1}^n$. Our objective is to predict the unknown labels in a test set comprising $m$ feature vectors $\{X_{n+j}\}_{j=1}^{m}$.  

In large-scale studies, we employ a \emph{selective prediction procedure}, formalized as
$$
\cR = \{\cC_{n+j} : j \in \cS\},
$$
where $\cS \subseteq [m]$ denotes the set of selected indices, and $\cC_{n+j}$ is a prediction set---an interval in regression or a subset of class indices in classification.

Under the InfoFCR framework, inference is guided by two key desiderata: scientific relevance and statistical trustworthiness. To ensure scientific relevance, every reported prediction set must satisfy a user-specified \emph{informativeness constraint} as $\cC_{n+j} \in \mathcal{I}$, where $\mathcal{I}$ is a context-specific collection of admissible sets. Meanwhile, to guarantee statistical trustworthiness, we propose to control the FCR at a pre‑specified level $\alpha \in (0,1)$. The FCR is defined as the expectation of the false coverage proportion (FCP): 
\begin{equation}\label{def:FCR}
\text{FCP}(\cS) = \frac{\sum_{j \in \cS} \mathbb{I}(Y_{n+j} \notin \cC_{n+j})}{\max(1, |\cS|)}, \qquad 
\text{FCR} = \mathbb{E}[\text{FCP}(\cS)],
\end{equation}
where $\mathbb{I}(\cdot)$ is the indicator function, and $|\cS|$ denotes the cardinality of $\cS$.

By integrating scientific relevance with statistical trustworthiness, InfoFCR provides a flexible and unified framework that encompasses diverse application scenarios and naturally recovers various error metrics as special cases. We illustrate this versatility with concrete examples in Section~\ref{sec:infofcrframework}.

\subsection{An illustrative example}\label{sec:illustrativeexample}

Consider a classification task on the CIFAR-10 dataset, where the goal is to predict the labels of images belonging to three categories: Vehicle (`1'), Aircraft (`2'), and Bird (`3').
Two desiderata in our InfoFCR analysis include:
(a) \emph{informativeness}, requiring that each reported prediction set contains at most two classes (since returning a trivial prediction set with all classes provides little insight for decision-making), and (b) \emph{trustworthiness}, requiring that the FCR is controlled below $0.1$. Implementation details are presented in Appendix~\ref{assec:detailillustrativeexample}.

\begin{figure}[!t]
    \centering
    \includegraphics[width=\linewidth]{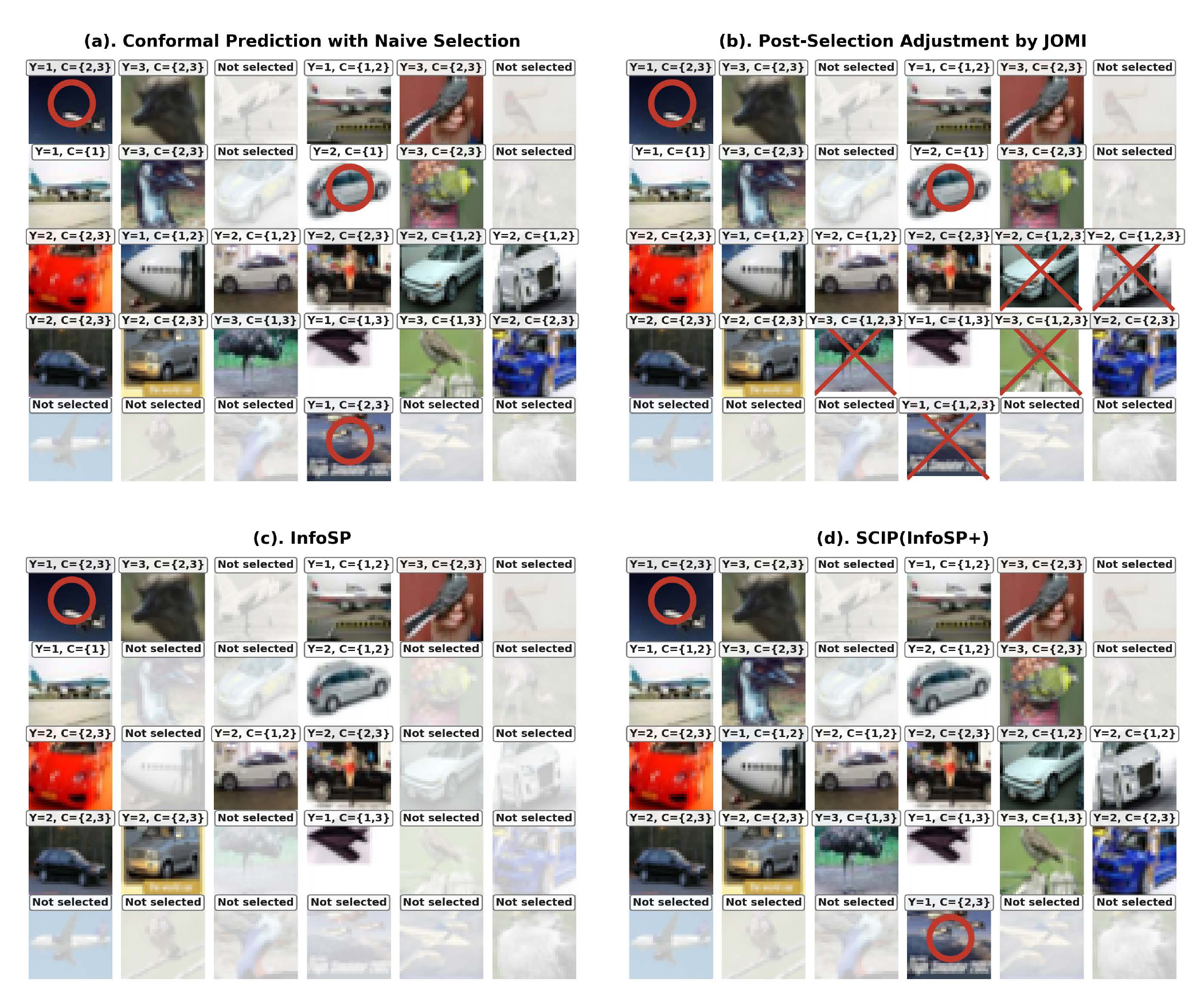}
    \caption{
        \small
        Three-class selective classification with size-two constraint. 
        (a): standard conformal prediction with uninformative sets excluded. 
        (b): \texttt{JOMI} adjustment. 
        (c): \infosp procedure.
        (d): \infospp, an instantiation of \scip.
        Red circles (`${\color{red} \bigcirc}$') indicate noncoverage; red crosses (`${\color{red} \bigtimes}$') indicate non-informative sets.
        }
    \label{fig:cifar10_onlyab}
\end{figure}

Figure~\ref{fig:cifar10_onlyab}(a) shows standard conformal prediction sets constructed to achieve $90\%$ marginal coverage. Among the 30 prediction sets, 3 fail to cover the true responses, leading to an FCP level of $0.1$%
\footnote{For visualization purposes, we use the FCP as a proxy for the FCR. The general trends and conclusions revealed here are rigorously corroborated by simulation results presented in Section~\ref{sec:numericexperiment}.}.
To ensure informativeness, 9 non-informative sets of size 3 are excluded, reducing the number of reported sets to 21. Unfortunately, this operation leads to an inflated FCP level of $0.143$ (3 errors among 21 selections).
To restore trustworthiness, we apply the \texttt{JOMI} method \citep{jin2025confidence} to correct for selection bias [Figure~\ref{fig:cifar10_onlyab}(b)].
While this correction reduces the FCP below $0.1$ (2 errors among 21 sets), 5 of the adjusted sets are no longer informative.

Panels (a) and (b) in Figure~\ref{fig:cifar10_onlyab} collectively reveal a fundamental dilemma for the InfoFCR task: unadjusted selection preserves informativeness at the expense of trustworthiness, whereas adjusted selection ensures trustworthiness at the cost of informativeness. To address this dilemma, \citet{gazin2025selecting} proposed an innovative and principled approach, termed \infosp, which simultaneously satisfies both desiderata. As shown in Figure~\ref{fig:cifar10_onlyab}(c), \infosp produces 12 informative sets with 1 noncovering, resulting in an FCP of $0.083$. Nevertheless, \infosp is inherently conservative, reporting fewer informative sets than potentially achievable.

This paper develops \scip (Selective Conformal Inference for Informative Predictions, pronounced ``skip''), a method designed to satisfy the dual requirements of informativeness and trustworthiness in selective predictions. As illustrated in Figure~\ref{fig:cifar10_onlyab}(d), \scip reports 21 informative sets with 2 noncoverings. We establish the validity and anti-conservativeness of \scip in Section~\ref{ssec:prdsproperty} and demonstrate that \infospp, as a concrete instantiation of \scip, improves \infosp; this improvement is analyzed theoretically in Section~\ref{ssec:proposedinfospp} and confirmed empirically in Section~\ref{sec:numericexperiment}.

\subsection{Preview of \scip and contributions}

The \scip method, presented in Section~\ref{sec:methodology}, comprises three main steps. First, an \emph{informative set constructor} maps each test feature $X_{n+j}$ to an informative prediction set $\cC_{n+j}$ admissible for reporting. Second, a \emph{trust score} is computed for each $\cC_{n+j}$ to quantify its reliability, with higher scores indicating a greater likelihood of coverage. Finally, a \emph{generalized conformal $p$-value} is computed for each $\cC_{n+j}$, and the Benjamini–Hochberg (\bhfdr) procedure is applied to select informative sets. The theoretical properties of \scip are investigated in Section~\ref{ssec:prdsproperty}. Section~\ref{sec:applicationofinfospp} illustrates \scip instantiations across various tasks, including conformal selection, selective informative prediction, and diversified selection. Section~\ref{sec:trust-score-cons} describes the derivation of task-specific trust scores, further enhancing the efficiency of \scip. Finally, Section~\ref{sec:numericexperiment} presents numerical results evaluating the empirical performance of \scip.

Our work makes several contributions to selective conformal inference. First, the \scip framework provides an alternative perspective on the fundamental dilemma previously identified and addressed by \infosp \citep{gazin2025selecting}. We integrate the dual requirements of informativeness and trustworthiness into selective informative prediction through the new concepts of trust scores and generalized conformal $p$-values. This reframing leads to the \scip method, which we show to guarantee finite‑sample FCR control and to be asymptotically anti‑conservative (Section~\ref{ssec:prdsproperty}).

Second, this new perspective complements and enhances existing works. In Section~\ref{ssec:proposedinfospp}, we show that \infospp, a specific instantiation of \scip, can be seen as an enhancement of the elegant \infosp that enables more efficient use of the FCR budget. This analysis clarifies why the intrinsic conservativeness of \infosp can be mitigated within the \scip framework. Furthermore, Section~\ref{ssec:diversifiedselection} demonstrates how the \scip framework strengthens the diversified selection theory \citep{wu2024optimal} by achieving finite‑sample guarantees.

Finally, the \scip framework provides a flexible yet solid foundation for future methodological innovation. Our initial exploration in Sections~\ref{sec:applicationofinfospp} and~\ref{sec:trust-score-cons} suggests that existing methods relying on fixed, pre‑trained nonconformity scores can be improved within the \scip framework by constructing data‑adaptive informative sets and more powerful trust scores, thereby further enhancing the efficiency, reliability, and interpretability of selective inference in practice.

\subsection{Related work}
Our work connects several key lines of research, including FCR analysis \citep{benjamini2005false,weinstein2013selection}, conformal selection \citep{jin2023selection,jin2023modelfree}, outlier detection \citep{bates2023testing,marandon2024adaptive}, semi-supervised multiple testing \citep{mary2022semisupervised}, selective classification \citep{rava2021burden,zhao2023controlling}, and selective conformal inference \citep{bao2024selective,jin2025confidence}. 
Due to page limitations, we briefly discuss related work in the main text and defer a detailed comparison to Appendix~\ref{asec:literaturereview}.

\section{Preliminaries: the InfoFCR Framework and Examples}
\label{sec:infofcrframework}

The InfoFCR framework \citep{gazin2025selecting} effectively integrates informativeness constraints with FCR control, providing a unified approach for large-scale selective inference and an important avenue for methodological innovation. To provide the necessary background for subsequent developments, we illustrate the versatility of this framework through several concrete examples, demonstrating how a range of classical error metrics naturally arise as special cases within this coherent formulation. 

\begin{example}[\textbf{Outlier Detection}, motivated by \citet{guan2022prediction}]
    \label{exa:outlierdetection}
    Consider a classification setting with $K$ null classes, where the objective is to simultaneously predict class labels and detect outliers. Let $(\delta_{j})_{j=1}^m \in \{0,1\}^m$ denote a decision rule, where $\delta_{j} = 0$ indicates assignment to a null class and $\delta_{j} = 1$ denotes outlier abstention (i.e., abstaining from assigning the test point to any null class). Two relevant error rates are the false discovery rate (FDR; \citealp{benjamini1995controlling}) and the false labeling rate (FLR; \citealp{guan2022prediction}): 
    \small
    \begin{equation}\label{def:FDR}
        \fdr = 
        \E \Bigg\{
            \frac{
                \sum_{j = 1}^m \bbI(Y_{n+j} \in [K],\delta_j = 1)
            }{
                1 \vee \sum_{j = 1}^m \bbI(\delta_j = 1)
            }
        \Bigg\},
        \;\;
        \text{FLR} = 
        \E \Bigg\{
            \frac{
                \sum_{j = 1}^m \bbI(Y_{n+j} \notin [K],\delta_j = 0)
            }{
                1 \vee \sum_{j = 1}^m \bbI(\delta_j = 0)
            }
        \Bigg\}.
    \end{equation}
    \normalsize 
    Define $(\cC_{n+j})_{j=1}^m \in \{\emptyset, [K]\}^m$, with $\cC_{n+j} = [K]$ ($\cC_{n+j} = \emptyset$) corresponding to assignment to null class (outlier). The InfoFCR framework accommodates both the FDR and FLR with suitable informativeness constraints. 
    \begin{itemize}
        \item[(a)] 
        Let $\cI = \{\emptyset\}$. Output $\cC_{n+j} = \emptyset$ if and only if (iff) $\delta_j = 1$, and define $\cS \coloneqq \{j \in [m]: \delta_j = 1\}$. 
        Then, the FCR defined in \eqref{def:FCR} is equivalent to the FDR in \eqref{def:FDR}.
    
        \item[(b)] Let $\cI = \{[K]\}$ and output $\cC_{n+j} = [K]$ iff $\delta_j = 0$. Define $\cS \coloneqq \{j \in [m]: \delta_j = 0\}$. Then, the FCR in \eqref{def:FCR} recovers the FLR in \eqref{def:FDR}.
    \end{itemize}
\end{example}

\begin{example}[\textbf{Conformal Selection}, motivated by \cite{jin2023selection}]
    \label{exa:conformalselection}
    Consider a regression-based conformal selection task, where a test unit $j$ is deemed ``interesting'' if its response $Y_{n+j}$ lies outside a pre-specified indifference region. Two common forms of indifference regions are considered: $(-\infty, c_0]$ for one-sided alternatives and $[c_l, c_u]$ for two-sided alternatives. 
    In practice, one-sided alternatives are typically used when interest focuses on extreme responses, such as identifying top-performing candidates in drug discovery \citep{laghuvarapu2023codrug} and candidate screening \citep{shehu2016adaptive}.
    Two-sided alternatives are appropriate when deviations in either direction are relevant, for instance, detecting anomalies in both directions in demand prediction \citep{levitskaya2023how}. 

    For one-sided alternatives, the decision rule is $(\delta_j)_{j=1}^m \in \{0,1\}^m$, where $\delta_j = 1$ indicates rejection of the null $(-\infty, c_0]$ and $\delta_j = 0$ otherwise. For two-sided alternatives, the directional rule extends to $(\delta_j^{\text{d}})_{j=1}^m \in \{-1,0,1\}^m$, where $\delta_j^{\text{d}} = -1$ ($\delta_j^{\text{d}} = 1$) corresponds to a significantly negative (positive) effect, while $\delta_j^{\text{d}} = 0$ denotes absence of evidence for deviation from the indifference region.  

 Let $(\theta_j)_{j=1}^m\in\{-1,0,1\}^m$ denote the true states, with
    $\theta_j = -1$ if $Y_{n+j} < c_l$, $\theta_j = 1$ if $Y_{n+j} > c_u$, and $\theta_j = 0$ otherwise. To control decision errors, we adopt the FDR for one-sided tests and the directional FDR (dFDR; \citealp{shaffer2002multiplicity,benjamini2005false}) for two-sided tests:
    \small 
    \begin{equation}\label{def:FDRs} 
        \fdr = \E \Bigg\{ 
            \frac{
                \sum_{j = 1}^m \bbI (Y_{n+j} \le c_0, \delta_j = 1)
            }{
                1 \vee \sum_{j = 1}^m \bbI (\delta_j = 1)
            }
        \Bigg\}, 
        \text{dFDR} = \E \Bigg\{ 
            \frac{
                \sum_{j = 1}^m \bbI (\delta_j^{\rm d} \ne \theta_j, \delta_j^{\rm d} \ne 0)
            }{
                1 \vee 
                \sum_{j = 1}^m \bbI (\delta_j^{\rm d} \ne 0)
            }
        \Bigg\}, 
    \end{equation}
    \normalsize

    Within the InfoFCR framework:
    \begin{itemize}
        \item[(a)] Let $\cI = \{(c_0, \infty)\}$, and set $\cC_{n+j} = (c_0, \infty)$ iff $\delta_j = 1$. Define $\cS=\{j\in [m]: \delta_j = 1\}$.
        Then, the resulting FCR reduces to the FDR in \eqref{def:FDRs}.
        \item[(b)] Let $\cI = \{ (-\infty, c_l), (c_u, \infty)\}$. 
        Set $\cC_{n+j}=(-\infty, c_l)$ iff $\delta_j^d=-1$ and $\cC_{n+j} = (c_u, \infty)$ iff $\delta_j^d=1$.
        Defining $\cS = \{j\in[m]: \delta_j^d\neq 0\}$, the FCR recovers the dFDR in \eqref{def:FDRs}.
    \end{itemize}
\end{example}

\begin{example}[\textbf{Selective Classification}, motivated by \cite{gazin2025selecting}]
    \label{exa:selectiveclassification}
    Consider two closely related large-scale classification problems.

    The first task involves identifying test units belonging to a pre-specified class $y_0 \in [K]$. For instance, in credit risk assessment, a bank may wish to identify applicants who are likely to default (class $y_0$) \citep{ahmed2016survey}. The decision rule is $\{\hY_{n+j} \in \{y_0, 0\} : j \in [m]\}$, where $\hY_{n+j} = 0$ indicates classification abstention. To aggregate errors, the false selection rate (FSR) is adopted \citep{rava2021burden}:
    \begin{equation}\label{def:FSR-k0}
        \fsr (\{y_0\}) = 
        \E \Bigg\{ 
            \frac{
                \sum_{j = 1}^m \bbI (Y_{n+j} \ne y_0, \hY_{n+j} = y_0)
            }{
                1 \vee \sum_{j = 1}^m \bbI (\hY_{n+j} = y_0)
            }
        \Bigg\}.
    \end{equation} 

    The second, more general task extends selection to all $K$ classes. 
    For example, researchers may wish to determine the disease status of a patient and withhold a risky decision when the evidence is insufficient \citep{olsson2022estimating}. 
    The decision rule becomes $\{\hY_{n+j} \in \{0, 1, \ldots, K\} : j \in [m]\}$, where $\hY_{n+j}=k \in [K]$ indicates that the patient is predicted to be status $k$, and $\hY_{n+j}=0$ denotes that existing evidence fails to support a decision (for any status). The corresponding FSR is
    \begin{equation}\label{def:FSR-K}
        \fsr([K]) = 
        \E \Bigg\{
            \frac{
                \sum_{j=1}^m \bbI (Y_{n+j} \ne \hY_{n+j}, \hY_{n+j} \neq 0)
            }{
                1 \vee \sum_{j = 1}^m \bbI (\hY_{n+j} \neq 0)
            } 
        \Bigg\}.
    \end{equation}
    
    We reformulate the above problems within the InfoFCR framework:
    \begin{itemize}
        \item[(a)] Let $\cI = \{\{y_0\}\}$ and set $\cC_{n+j} = \{y_0\}$ iff $\hY_{n+j} = y_0$. With $\cS = \{j \in [m] : \hY_{n+j} = y_0\}$, the FCR reduces to $\fsr (\{y_0\})$.
        \item[(b)] For selection across all classes, we set $\cI = \{C \subset [K] : |C| = 1\}$, requiring each prediction set to be a singleton. 
        Defining $\cC_{n+j} = \{k\}$ iff $\hY_{n+j} = k$ for $k \in [K]$, and $\cS = \{j \in [m] : \hY_{n+j} \ne 0\}$, the resulting FCR exactly recovers $\fsr([K])$.
    \end{itemize} 
\end{example}

Finally, beyond the error metrics above, the InfoFCR framework accommodates a rich, more complex class of informativeness constraints. In regression, such constraints include: (a) length-restricted prediction intervals; (b) intervals that exclude uninteresting null values; (c) unions of pre-specified informative intervals. In classification, constraints may include: (a) prediction sets of bounded size; (b) sets that exclude scientifically irrelevant categories. The FCR control under such constraints falls outside the scope of standard error metrics yet arises naturally in many practical applications. We will revisit these scenarios in later methodology developments and numerical experiments.
 
\section{The \scip Procedure}\label{sec:methodology}

We now introduce the \scip procedure for the InfoFCR task. Before diving into the technical details, we provide an overview of the \scip procedure, which relies on three innovative concepts:
\begin{itemize}
    \item An \emph{informative set constructor} (Section~\ref{ssec:isc}) that tailors user-specified informativeness constraints to generate adaptive, data-driven prediction sets for individual units. 

    \item A \emph{trust score} (Section~\ref{subsec:trust-score}) assigned to each informative set, quantifying its trustworthiness and serving as a building block for subsequent inference. 

    \item \emph{Generalized conformal $p$-values} (Section~\ref{subsec:gcp}), computed from the trust scores, which are used to rank and select informative prediction sets. 
\end{itemize}

\subsection{Informative set constructor}\label{ssec:isc}

The {informative set constructor} is a mapping $\cC^{\cI} : \cX \to \cI$ that assigns each feature vector to a prediction set admissible for reporting. We require this mapping to be \emph{permutation-invariant} with respect to the combined sample $\{Z_i := (X_i, Y_i)\}_{i=1}^{n+m}$, as formally defined below. 

\begin{definition}\label{def:rigorousper-inv}
    Let $G$ be a mapping indexed by $(Z_1, \ldots, Z_{n+m}, \mathbb{D})$, where $\mathbb{D}$ denotes auxiliary data (e.g., training or additional calibration data) used to construct $G$. Then $G$ is said to be \emph{permutation-invariant} with respect to $\{Z_i\}_{i=1}^{n+m}$ if: (a) for any fixed $\mathbb{D}$ and any permutation $\varsigma$ on $[n+m]$, 
    \begin{equation*}        
        G(\cdot\,; Z_1,\dots,Z_{n+m}, \mathbb{D}) = G(\cdot\,; Z_{\varsigma(1)},\dots,Z_{\varsigma(n+m)}, \mathbb{D});
    \end{equation*}
    and (b) If the random variables $\{Z_i\}_{i=1}^{n+m}$ are exchangeable, they remain exchangeable conditional on $\mathbb{D}$.
\end{definition}

The most natural approach to designing a valid $\cC^\cI$ is to pre-specify a constructor that directly reflects the inferential goal. For example, in one‑sided conformal selection with an indifference region $(-\infty, c_0]$ (Example~\ref{exa:conformalselection}-a), one may take $\cC^{\cI}(X) = (c_0, \infty)$. Similarly, in selective classification aimed at identifying units from a target class $y_0$ (Example~\ref{exa:selectiveclassification}-a), one may set $\cC^{\cI}(X) = \{y_0\}$. Such pre‑specified constructors clearly satisfy the permutation‑invariance condition.

However, pre-specifying an informative set constructor can be inefficient or infeasible in many applications. In such cases, \emph{data-driven} constructors can be designed to adapt to evidence in the test sample. Consider two-sided conformal selection (Example~\ref{exa:conformalselection}-b). Discarding directional information, the full set $(-\infty,c_l) \cup (c_u, \infty)$ does not fulfill the informativeness constraint $\cI = \{(-\infty, c_l), (c_u, \infty)\}$. One could resort to a random guess to narrow down to a single direction, but a more intuitive and effective approach is to tailor the informative set adaptively to each test unit based on its estimated signal direction. Specifically, let $\hpr$ denote the estimated probability of an event. We first form a \emph{preliminary directional decision}
\begin{equation}\label{def:delta-j}
    \delta_j = 
    \sgn \Big( 
        \hpr\big\{ 
            Y_{n+j} \in (c_u, \infty) \mid X_{n+j}
        \big\}
        - 
        \hpr\big\{ 
            Y_{n+j} \in (-\infty, c_l) \mid X_{n+j}
        \big\}
    \Big),
\end{equation}
and then let $\cC^{\cI}(X_{n+j}) = (c_u, \infty)$ if $\delta_j = 1$ and $\cC^{\cI}(X_{n+j}) = (-\infty, c_l)$ if $\delta_j = -1$. This strategy similarly applies to selective classification under a size-one constraint (Example~\ref{exa:selectiveclassification}-a), where a natural data-driven constructor is given by
$\cC^{\cI}(X) = \{\operatorname{argmax}_{k}\; \hpr(Y = k \mid X)\}$, in line with the proposal of \citet{zhao2023controlling}.
When the probability estimate is derived from an independent training dataset, both $\cC^{\cI}$ constructors satisfy the permutation-invariance condition.

The design of $\cC^\cI$ is context-specific and often depends on the complexity of $\cI$. When $\cI$ is a singleton, the pre‑specified approach suffices. For discrete $\cI$ with small cardinality, one may consider strategies based on estimated probabilities (cf.~\eqref{def:delta-j}). If $\cI$ is more complex (e.g. involving a large number of categories in classification or interval constraints in regression that cannot be easily reduced to a simple classification problem), one can adopt a general data‑driven construction based on $\cI$-adjusted $p$-values \citep{gazin2025selecting}; we elaborate on this approach and discuss a concrete instantiation of \scip in Section~\ref{ssec:proposedinfospp}.

\subsection{Trust score}\label{subsec:trust-score}

The \emph{trust score} is a mapping $T: \cX \times \cI \to \mathbb{R}^+$ that quantifies the reliability of an informative set associated with a given feature $X$. 
Higher trust scores indicate more trustworthy prediction sets (thus stronger candidates for selection).
Similar to the informative set constructor, the trust score $T(\cdot,\cC^\cI(\cdot))$ is required to be permutation‑invariant with respect to the combined sample $(Z_i)_{i=1}^{n+m}$ [c.f Definition~\ref{def:rigorousper-inv}].
We first present several illustrative examples, deferring the complex issue of optimal trust score selection to Section~\ref{sec:trust-score-cons}.

A simple way to construct a trust score (for a set $\cC$) is to tailor a monotone
non‑conformity score $V(x,y)$ (for a value $y$). For example, in regression with one-sided indifference region $(-\infty, c_0]$ (Example~\ref{exa:conformalselection}-a,  \citealp{jin2023selection}), we take $\cC^{\cI}(X) = (c_0, \infty)$. If $V(x,y)$ is monotone, the trust score can be defined as $T(x,\cC^{\cI}) = \exp\big[\inf \{-V(x,y) : y \notin (c_0, \infty)\}\big] = \exp\{-V(x,c_0)\}$.

For data-driven constructors, this approach may not be applicable as a suitable non-conformity score may not exist. Consider two-sided conformal selection (Example~\ref{exa:conformalselection}-b). Using the preliminary directional decision $\delta_j$ from~\eqref{def:delta-j}, we define
\begin{equation}\label{trust-two-sided} 
    T(X_{n+j},\cC^{\cI}) = 
    \begin{cases}
    \hpr \{Y_{n+j} \in (c_u, \infty) \mid X_{n+j}\}, & \delta_j = 1,\\
    \hpr \{Y_{n+j} \in (-\infty, c_l) \mid X_{n+j}\}, & \delta_j = -1.
    \end{cases}
\end{equation}
This trust score directly estimates the probability that $Y_{n+j}$ falls within the informative set. Similarly, for selective classification under a size-one constraint (Example~\ref{exa:selectiveclassification}-a), where a suitable non-conformity score is also unavailable, the estimated probability serves as a natural trust score: 
$T(X_{n+j},\cC^{\cI}) = \hpr(Y_{n+j} = \hY_{n+j} \mid X_{n+j}).$ For informative sets containing multiple classes, we define
$
T(X_{n+j}, \cC^\cI_{n+j}) = \sum_{k = 1}^K \bbI(k \in \cC^\cI_{n+j}) \cdot \hpr(Y_{n+j} = k \mid X_{n+j})
$.
In all the above cases, when the monotone score or probability estimate is derived from an independent training set, the trust score is permutation-invariant with respect to $(Z_i)_{i=1}^{n+m}$. 
 
\begin{remark}
The trust score differs from the non-conformity score in two fundamental respects:
(a) A non‑conformity score $V(x,y)$ measures the discrepancy between a hypothesized point $(X_{n+j},y)$ and the trend in the calibration data, effectively testing a simple null hypothesis of the form $Y_{n+j}=y$ \citep{angelopoulos2024theoretical}. In contrast, the trust score evaluates the reliability of an entire prediction set, targeting the composite null defined in~\eqref{eq:originalhypothesis}.
(b) The two scores differ in their benchmarks: the non‑conformity score compares the test point against a fixed trend (calibrated from labeled data) that is independent of the test point. By construction, however, the trust score is a function of the informative set, which may depend explicitly on the test data.
These two properties together make the trust score a more adaptive and suitable building block for informative selective prediction.  
\end{remark}

\subsection{Generalized conformal \textit{p}-value}\label{subsec:gcp}

We now turn to the problem of determining the selection set $\cS$. A foundational tool for uncertainty quantification in conformal inference is the conformal $p$-value, which arises naturally from a hypothesis-testing perspective. Motivated by this idea, we consider a multiple-testing problem induced by the informative sets:
\begin{equation}\label{eq:originalhypothesis}
    H_{0,j}: Y_{n+j} \notin \cC^\cI(X_{n+j})
    \quad \text{versus} \quad 
    H_{1,j}: Y_{n+j} \in \cC^\cI(X_{n+j}),
    \qquad j \in [m].
\end{equation}
We emphasize that \eqref{eq:originalhypothesis} is not a standard hypothesis-testing problem, in which the null hypothesis should be pre-specified before observing the data.
Instead, the null hypotheses in~\eqref{eq:originalhypothesis} are adaptive, depending on both the informativeness constraints and the test data. 
Nevertheless, this perspective provides a valuable conceptual device, inspiring a natural strategy: construct a quantity analogous to a conformal $p$-value using the trust scores 
$\{T_i \coloneqq T(X_i,\cC^{\cI}(X_i)) : i \in [n+m]\}$, just as standard conformal inference builds $p$-values from non-conformity scores. Specifically, for $j \in [m]$, we define
\begin{equation}\label{eq:pseudopvalue}
    p_j^{\ps} = 
    \frac{
        \sum_{i = 1}^n 
        \bbI \big( Y_i \notin \cC_i^\cI,\, T_i > T_{n+j} \big)
        + 
        \big\{ 
            1 + 
            \sum_{i = 1}^n 
            \bbI \bigl(Y_i \notin \cC_i^\cI,\, T_i = T_{n+j}\bigr)
        \bigr\}
        \cdot U_j
    }{1+n},
\end{equation}
where $\{U_j\}_{j=1}^m$ are independent $\mathrm{Unif}(0,1)$ variables used to break ties. 
We refer to $p_j^{\ps}$ as a \emph{generalized conformal $p$-value}, which is \emph{not} a $p$-value in the classical sense because (a) the hypotheses in \eqref{eq:originalhypothesis} are non-standard, and (b) $p_j^{\ps}$ does not generally follow a super-uniform distribution under the null. The theoretical properties of  $p_j^{\ps}$ are established in Section~\ref{ssec:prdsproperty}. 

Intuitively, if $\cC^{\cI}(X_{n+j})$ is trustworthy (i.e., associated with a high trust score), it receives a small generalized conformal $p$-value and is thus prioritized for selection. Consequently, the \scip procedure selects those sets whose generalized $p$-values fall below a threshold determined by a \emph{self-consistent} reformulation of the \bhfdr procedure \citep{blanchard2009adaptive}; see Appendix~\ref{subsec:self-cons} for details. Specifically, let 
\begin{equation}\label{eq:qbh}
    \cS = 
    \big\{ 
        j \in [m] : p_j^{\ps} \le \widehat{\alpha}
    \big\},
    \quad 
    \widehat{\alpha} = 
    \max \Bigg\{
        a \in [0,1] : 
        \frac{\alpha}{m} \sum_{j = 1}^m \bbI (p_j^{\ps} \le a) \ge a
    \Bigg\},
\end{equation}
with the convention $\max \emptyset = 0$. 
The final output of \scip is 
$
\cR^{\cI} = \{\cC_{n+j}^{\cI} : j \in \cS\}.
$
The procedure is summarized in Algorithm~\ref{alg:baseprocedure}.

\section{Theory}\label{ssec:prdsproperty}

Our theoretical analysis first establishes key properties of the generalized conformal $p$-values (Section~\ref{ssec:gcp-theory}) and then provides theoretical guarantees for \scip (Section~\ref{subsec:theory-scip}).

\subsection{Properties of generalized conformal \textit{p}-values}\label{ssec:gcp-theory}

We begin with a standard exchangeability condition in conformal inference.
\begin{assumption}\label{ass:fullexchangeability}
    The data points in $\cD^\mix = \{(X_i, Y_i): i \in [n+m]\}$ are exchangeable conditional on training data $\cD^\tr$.
\end{assumption}

Then, we establish the first key property of the generalized conformal $p$-values.

\begin{proposition}\label{the:pseudopvalueproperty}
    Under Assumption~\ref{ass:fullexchangeability}, and assuming that both the informative set constructor and the trust score function are permutation-invariant to $\{Z_i\}_{i = 1}^{n+m}$ [c.f Definition~\ref{def:rigorousper-inv}], we have, for any prescribed level $\alpha \in (0,1)$,
    \begin{equation}\label{eq:gs-unif}
        \pr\bigl(H_{0,j},\, p_j^{\ps} \le \alpha\bigr) \le \alpha.
    \end{equation}
\end{proposition}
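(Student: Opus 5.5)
The plan is the classical exchangeability argument for conformal $p$-values. We augment the calibration set with the $j$-th test point and observe that, on the event $H_{0,j}$, the test point is just one more ``null'' unit (a unit whose label falls outside its informative set), exchangeable with the calibration nulls. Fix $j\in[m]$ and let $\cE$ be the unordered multiset $\{Z_1,\dots,Z_n,Z_{n+j}\}$, together with the remaining test points $\{Z_{n+l}: l\neq j\}$ and $\cD^\tr$.

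\textbf{Step 1: the scores and null indicators are determined by $\cE$.} By Definition~\ref{def:rigorousper-inv}(a), the maps $\cC^\cI(\cdot)$ and $T(\cdot,\cC^\cI(\cdot))$ do not change when $Z_{n+j}$ is swapped with any $Z_i$, $i\in[n]$. Hence, conditional on $\cE$, each of the $n+1$ points $z_k$ carries a fixed trust score $T_k$ and a fixed null indicator $\eta_k=\bbI(y_k\notin \cC^\cI(x_k))$. Neither depends on which of these points happens to occupy the test position. Let $N=\sum_{k}\eta_k$ be the number of nulls among the $n+1$ points.

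\textbf{Step 2: the test point sits in a uniformly random position.} By Assumption~\ref{ass:fullexchangeability} and Definition~\ref{def:rigorousper-inv}(b), conditional on $\cE$ the index $k^\ast$ of the point sitting in the test position is uniform on the $n+1$ positions (counted with multiplicity). If $k^\ast=k$, then the event $H_{0,j}$ is $\{\eta_k=1\}$, and $p_j^{\ps}$ equals
\[
p^{(k)}=\frac{\sum_{l\neq k}\eta_l\,\bbI(T_l>T_k)+\bigl\{1+\sum_{l\neq k}\eta_l\,\bbI(T_l=T_k)\bigr\}U_j}{n+1}.
\]
The numerator counts only other null units, because every calibration point $i$ with $Y_i\notin\cC_i^\cI$ is exactly such a unit. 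Therefore
\[
\pr(H_{0,j},\,p_j^{\ps}\le\alpha\mid\cE)=\frac{1}{n+1}\sum_{k:\eta_k=1}\pr_{U}\bigl\{(n+1)p^{(k)}\le(n+1)\alpha\bigr\},
\]
where the probability is only over $U_j\sim\mathrm{Unif}(0,1)$, which is independent of everything else.

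\textbf{Step 3: a tiling argument bounds the sum.} This is the step needing the most care, because of ties. For each null $k$, the quantity $(n+1)p^{(k)}$ is uniform on an interval $[a_k,a_k+g_k]$. Here $a_k$ is the number of nulls with strictly larger score, and $g_k\ge 1$ is the size of the tie group of $k$ among the nulls.

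Group the nulls by tie classes of $T$, ordered from the largest score down. The $g$ nulls in a tie class share the same interval $[a,a+g]$. Their combined contribution to the sum is therefore
\[
g\cdot\frac{|[a,a+g]\cap[0,(n+1)\alpha]|}{g}=|[a,a+g]\cap[0,(n+1)\alpha]|.
\]
Successive tie classes give consecutive, non-overlapping intervals that tile $[0,N]$. Summing over classes, the total equals $|[0,N]\cap[0,(n+1)\alpha]|\le (n+1)\alpha$.

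Dividing by $n+1$ gives $\pr(H_{0,j},p_j^{\ps}\le\alpha\mid\cE)\le\alpha$. Taking expectations over $\cE$ yields \eqref{eq:gs-unif}.

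The main obstacle is Step 1. We must justify that the null indicators attach to points rather than positions, even though $\cC^\cI$ may depend on all test features and the hypothesis $H_{0,j}$ is data-dependent. This is exactly what permutation invariance provides. It is also why the bound is on the joint probability $\pr(H_{0,j},\,p_j^{\ps}\le\alpha)$ rather than on a conditional probability: conditional on $H_{0,j}$, the test point is no longer uniformly placed, so $p_j^{\ps}$ need not be super-uniform given the null.
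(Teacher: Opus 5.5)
Your proof is correct, but it takes a different route from the paper's. The paper does not condition on the multiset and count directly. Instead, it introduces the composite score $\tT(x,y)=\bbI\{y\notin\cC^\cI(x)\}\,T(x,\cC^\cI(x))$. Because $T>0$, on $H_{0,j}$ the generalized $p$-value $p_j^{\ps}$ coincides exactly with the oracle conformal $p$-value $p_j^{\cp}(Y_{n+j};\tT)$ built from $\tT$: covered calibration units have $\tT_i=0<\tT_{n+j}$ and never enter the counts. Since $\tT$ is permutation-invariant, the scores $\tT_1,\dots,\tT_{n+m}$ are exchangeable. Hence $p_j^{\cp}(Y_{n+j};\tT)$ is exactly $\mathrm{Unif}(0,1)$, with ties handled by the appendix proposition on conformal $p$-values. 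The paper then simply drops the null event: $\pr(H_{0,j},p_j^{\ps}\le\alpha)=\pr(H_{0,j},p_j^{\cp}\le\alpha)\le\pr(p_j^{\cp}\le\alpha)=\alpha$.

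That reduction is shorter given standard conformal facts. More importantly, it sets up the object $p_j^{\oc}$ that the paper reuses in the PRDS result, the FCR theorem, and the anti-conservativeness bound; in the last of these, the discarded term $\pr(H_{1,j},p_j^{\cp}\le\alpha)$ is exactly what gets controlled.

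Your argument, by contrast, is self-contained and never compares null units with covered units. It therefore does not need strict positivity of $T$, which the paper's identification uses to rule out ties between a null test point and covered calibration points with $\tT_i=0$. Your tiling also gives more than the stated inequality: it yields the exact conditional identity $\pr(H_{0,j},p_j^{\ps}\le\alpha\mid\mathcal{E})=\min\{N,(n+1)\alpha\}/(n+1)$. Equivalently, given the null and $N$, $p_j^{\ps}$ is uniform on $[0,N/(n+1)]$. This is essentially the content of part (c) of the paper's PRDS lemma, which the paper uses later for the PU-learning result.
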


\begin{remark}\label{rem:generalizedsuperuniformity}
    The property in \eqref{eq:gs-unif} is known as \emph{generalized super‑uniformity} \citep{jin2023selection}. It is particularly appealing because it not only provides finite-sample guarantees for FCR control, but also adapts to unknown sparsity in the test data, allowing the nominal FCR level to be asymptotically exhausted. 
\end{remark}

The next proposition concerns the positive regression dependence on a subset (PRDS) property of the generalized conformal $p$-values \citep{benjamini2001control}; see Appendix~\ref{ssec:prooftheprdspvalue} for a formal definition. Establishing this property requires the notion of an oracle conformal $p$-value (so called because it depends on the unobserved label $Y_{n+j}$):
\begin{equation*}
    p_j^{\oc} =
    \frac{
        \sum_{i = 1}^n
        \bbI \big\{ 
            \delta_i T_i > \delta_{n+j} T_{n+j} 
        \big\} 
        + 
        \Big[ 
            1 + 
            \sum_{i = 1}^n 
            \bbI \big\{ 
                \delta_i T_i = \delta_{n+j} T_{n+j} 
            \big\} 
        \Big]
        \cdot U_j
    }{
        n+1
    }
\end{equation*}
for $j \in [m]$, where $\delta_i = \bbI (Y_i \notin \cC_i^\cI)$ for $i \in [n+m]$.

\begin{proposition}\label{the:prdspvalue} 
    Let $\bp^\ps_{j \to \oc}$ be the sequence obtained by replacing the $j$-th element of $\bp^\ps = (p_j^\ps,\, j \in [m])$ with $p_j^\oc$.
    If Assumption~\ref{ass:fullexchangeability} holds and both the informative set constructor and the trust score function are permutation-invariant to $\{Z_i\}_{i = 1}^{n+m}$, then $\bp_{j \to \oc}^\ps$ is PRDS on $\{j\}$.
\end{proposition}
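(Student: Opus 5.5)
The plan follows the conditioning argument used for PRDS of conformal $p$-values in conformal selection \citep{jin2023selection}, adapted to trust scores and data-driven constructors. Fix $j\in[m]$ and write $W_i=\delta_iT_i$ for $i\in[n+m]$. Let $\mathcal{E}$ be the $\sigma$-field generated by $\cD^\tr$, the unordered multiset $[Z_1,\dots,Z_n,Z_{n+j}]$, the test points $\{Z_{n+l}:l\neq j\}$, and the tie-breakers $\{U_l: l\ne j\}$.

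\textbf{Step 1: scores are fixed given $\mathcal{E}$.} By Definition~\ref{def:rigorousper-inv}, $\cC^\cI$ and $T$ are invariant under permutations of $\{Z_i\}_{i=1}^{n+m}$. Hence, conditional on $\mathcal{E}$, the following are deterministic: the informative sets, the trust scores, the indicators $\delta_i$, and therefore the multiset $\{W_i: i\in[n]\cup\{n+j\}\}$ and every $T_{n+l}$ for $l\neq j$.

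\textbf{Step 2: the only randomness is the label of the test point.} By Assumption~\ref{ass:fullexchangeability} and part (b) of the definition, the position of $Z_{n+j}$ among the $n+1$ elements of the multiset is uniform given $\mathcal{E}$. Combined with the independent $U_j$, this makes $p_j^\oc$ exactly $\mathrm{Unif}(0,1)$ given $\mathcal{E}$. Moreover, $p_j^\oc$ is a nonincreasing function of the realized value $W_{n+j}$, with $U_j$ spreading the mass uniformly within a tie block.

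\textbf{Step 3: monotonicity of the other $p$-values.} For $k\neq j$, the numerator of $p_k^\ps$ counts calibration points with $\delta_i=1$ and $T_i>T_{n+k}$, plus a tie term. Given $\mathcal{E}$, the calibration set is the fixed multiset with the element playing the role of $Z_{n+j}$ removed. So $(n+1)p_k^\ps$ equals a constant determined by $\mathcal{E}$ minus the contribution of the removed element, namely $\bbI(\delta_{n+j}=1,\,T_{n+j}>T_{n+k})$, with the analogous adjustment in the tie term. This contribution is nondecreasing in $W_{n+j}$:
\begin{itemize}
\item if the removed element is non-null, then $W_{n+j}=0$ and it contributes nothing;
\item if it is null, then $W_{n+j}=T_{n+j}$, and a larger trust score is more likely to be counted.
\end{itemize}
Consequently each $p_k^\ps$ is a nonincreasing function of $W_{n+j}$ with $U_k$ held fixed. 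Since $p_j^\oc$ is also nonincreasing in $W_{n+j}$, each $p_k^\ps$ is a nondecreasing function of $p_j^\oc$ given $\mathcal{E}$.

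\textbf{Step 4: conclude PRDS.} From Step 3, the whole vector $\bp^\ps_{j\to\oc}$ can be written as a coordinatewise nondecreasing function of $p_j^\oc$ given $\mathcal{E}$. For any increasing set $D$, it follows that $t\mapsto \pr(\bp^\ps_{j\to\oc}\in D\mid p_j^\oc=t,\mathcal{E})$ is nondecreasing in $t$. Because $p_j^\oc$ is uniform given $\mathcal{E}$, integrating over $\mathcal{E}$ transfers this monotonicity to the unconditional statement, which is PRDS on $\{j\}$.

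The main obstacle is the treatment of ties in Steps 2 and 3. The function mapping $p_j^\oc$ to $W_{n+j}$ is well defined only up to the $U_j$-randomization within a tie block, so one must check that elements with equal $W$ yield identical $p_k^\ps$. This holds because $T$ takes values in $\mathbb{R}^+$: $W=0$ occurs exactly for non-null points, and equal positive $W$ values mean equal $\delta$ and equal $T$. I would settle this first. A secondary technicality is the measure-theoretic conditioning on a multiset, which I would handle by conditioning on the empirical measure and on the order statistics of the $W$'s.
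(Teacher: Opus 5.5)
Your proposal is correct and is essentially the paper's argument. The paper proves Proposition~\ref{prop:keypropertyconformalpvalue} for generic conformal $p$-values by conditioning on the multiset $\{\tT_1,\dots,\tT_n,\tT_{n+j}\}$, the other test scores $T_{n+j'}$ and the other tie-breakers, shows that $p_j^\oc$ is uniform and independent of this algebra and that each $p_{j'}^\ps$ is a nondecreasing function of $p_j^\oc$ given it, and then integrates. These are exactly your Steps 2--4, and your data-level $\mathcal{E}$ is just a finer version of the same conditioning. The only fix needed is to include the auxiliary data $\mathbb{D}$ of Definition~\ref{def:rigorousper-inv} (e.g.\ $\cD^\ca_0$ in \infospp) in $\mathcal{E}$, not just $\cD^\tr$, so that $\cC^\cI$ and $T$ are genuinely fixed given $\mathcal{E}$.
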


Given that $p_j^{\ps} = p_j^{\oc}$ under the null $H_{0,j}$, the PRDS property established in Proposition~\ref{the:prdspvalue} enables the standard leave‑one‑out technique for proving the finite‑sample validity of \scip for FCR control.

\subsection{Properties of the \scip procedure}\label{subsec:theory-scip}

We first establish a finite-sample theoretical guarantee for the \scip procedure. 

\begin{theorem}\label{the:fcrcontrolfinitesample}
    If Assumption~\ref{ass:fullexchangeability} holds and both the informative set constructor and the trust score function are permutation-invariant to $\{Z_i\}_{i = 1}^{n+m}$, then the FCR of \scip satisfies $\fcr \le \alpha$.
\end{theorem}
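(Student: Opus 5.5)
Write $R=|\cS|$ and decompose the FCR over the test units:
\[
\fcr=\E\Bigl[\frac{\sum_{j\in[m]}\bbI(H_{0,j},\,j\in\cS)}{1\vee R}\Bigr]=\sum_{j=1}^m \E\Bigl[\frac{\bbI(H_{0,j})\,\bbI(p_j^{\ps}\le \widehat\alpha)}{1\vee R}\Bigr].
\]
A unit $j\in\cS$ is a false coverage exactly when $Y_{n+j}\notin \cC^\cI_{n+j}$, i.e.\ when $H_{0,j}$ holds. By self-consistency of \eqref{eq:qbh}, $\widehat\alpha\le \alpha R/m$, so on the event $\{j\in\cS\}$ we have $\widehat\alpha=\alpha R/m$ with $R\ge1$.

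The plan is the standard leave-one-out argument. On $H_{0,j}$ we have $\delta_{n+j}=1$, and on the calibration side $\delta_iT_i=T_i$ exactly when $Y_i\notin\cC_i^\cI$. Hence $p_j^{\ps}=p_j^{\oc}$ on $H_{0,j}$, and the $j$-th summand equals
\[
\E\Bigl[\frac{\bbI(H_{0,j})\,\bbI(p_j^{\oc}\le \alpha R_j/m)}{1\vee R_j}\Bigr],
\]
where $R_j$ is the number of rejections of BH applied to $\bp^\ps_{j\to\oc}$. Here we use that BH is a function of the whole $p$-vector, and that the vector equals $\bp^\ps_{j\to\oc}$ on $H_{0,j}$. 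Dropping the indicator $\bbI(H_{0,j})$ gives an upper bound. Next I decompose over $k=R_j$:
\[
\sum_{k=1}^m \frac1k\,\pr\bigl(p_j^{\oc}\le \alpha k/m,\; R_j=k\bigr).
\]

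Writing $q_k=\alpha k/m$ and $D_k=\{R_j\le k\}$, which is a nondecreasing family, Abel summation turns this sum into
\[
\sum_k \Bigl[\frac{\pr(p_j^\oc\le q_k, D_k)}{k}-\frac{\pr(p_j^\oc\le q_{k}, D_{k-1})}{k}\Bigr].
\]
The usual Blanchard--Roquain bound then gives at most $\sum_k\bigl[\pr(D_k\mid p_j^\oc\le q_k)-\pr(D_{k-1}\mid p_j^\oc\le q_{k-1})\bigr]\,\pr(p_j^\oc\le q_k)/k$. This requires two inputs. The first is that $p_j^\oc$ is super-uniform, $\pr(p_j^\oc\le t)\le t$. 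This follows from exchangeability (Assumption~\ref{ass:fullexchangeability}) together with the permutation invariance of $\cC^\cI$ and $T$: the scores $\{\delta_iT_i\}_{i\in[n]\cup\{n+j\}}$ are exchangeable, and the randomized rank gives an exactly uniform $p_j^\oc$. The second is that $t\mapsto \pr(D_k\mid p_j^\oc\le t)$ is nondecreasing, where $D_k$ is an increasing set in $\bp^\ps_{j\to\oc}$. This is precisely the PRDS-on-$\{j\}$ property in Proposition~\ref{the:prdspvalue}, in its conditioning-on-$\{p_j\le t\}$ form, which follows from the conditional-on-$\{p_j=t\}$ form. Then $\pr(p_j^\oc\le q_k)/k\le \alpha/m$, and the telescoping sum is at most $\alpha/m$ times $\pr(D_m\mid\cdot)\le1$. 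Summing over $j$ yields $\fcr\le\alpha$.

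The main obstacle is making the replacement step rigorous. BH uses the observed $\bp^\ps$, while the dependence argument needs $\bp^\ps_{j\to\oc}$, and the two coincide only on $H_{0,j}$. I handle this by noting that the summand carries the factor $\bbI(H_{0,j})$, so swapping in $\bp^\ps_{j\to\oc}$ changes nothing inside it; only then is $\bbI(H_{0,j})\le1$ dropped. I must also check that the PRDS of Proposition~\ref{the:prdspvalue} holds for $p_j^\oc$ jointly with the others, including the auxiliary $U$'s and the data-dependent constructor, rather than only for a fixed null. The super-uniformity of $p_j^\oc$ is routine given permutation invariance.
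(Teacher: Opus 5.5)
Your proposal is correct and follows the paper's proof essentially step for step. You decompose the FCR over $j$, replace $p_j^{\ps}$ and $\bp^{\ps}$ by $p_j^{\oc}$ and $\bp^{\ps}_{j\to\oc}$ on $H_{0,j}$, drop the null indicator, and combine exact uniformity of $p_j^{\oc}$ with the PRDS property of Proposition~\ref{the:prdspvalue} to obtain $\sum_{k}\pr(R_j=k \mid p_j^{\oc}\le \alpha k/m)\le 1$; you derive this last bound explicitly via the Blanchard--Roquain telescoping, whereas the paper only cites the standard argument.

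One small point should be stated explicitly. The identity $p_j^{\ps}=p_j^{\oc}$ on $H_{0,j}$ requires $T_{n+j}>0$, so that covered calibration units (which have $\delta_iT_i=0$) can neither exceed nor tie with $T_{n+j}$. This is the positivity of the trust score that the paper invokes.
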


We next investigate the anti‑conservativeness of \scip, which, to our knowledge, has not been studied in the context of FCR analysis. Let $\cH_0 = \{j: Y_{n+j} \notin \cC^\cI(X_{n+j})\}$ and $\pi_0 = |\cH_0|/m$. 
We consider the setting where $\cC^\cI$ is fixed or depends only on the training data (which are treated as fixed in this subsection). Consider the multiple testing problem \eqref{eq:originalhypothesis}. Denote by 
$
\pi_0^* = \mathbb{E}[\pi_0] = \pr\big\{Y_{n+j} \notin \cC^\cI(X_{n+j})\big\}
$
the expected proportion of ``null hypotheses'' in the test sample, where the expectation (probability) is taken over $P_{XY}$, the joint distribution of $(X, Y)$. 

\begin{assumption}\label{ass:independentnullindicator}
 (a) The indicators $\bbI\{Y_i \notin \cC^\cI(X_i)\}$, $i \in [n+m]$, are \iid Bernoulli$(\pi_0^*)$ variables. (b) The pre‑specified FCR level $\alpha$ satisfies $\alpha < n\pi_0^*/(n+1)$. 
\end{assumption}

\begin{remark}
Condition (a) holds when the test data are i.i.d. and independent of $\cC^\cI(\cdot)$. Although the i.i.d. assumption is stronger than the exchangeability condition required for FCR control, it simplifies the theoretical analysis and is commonly adopted in the conformal literature (see, e.g., \citet{lei2018distribution} and \citet{jin2023selection}). Condition (b) is relatively mild when $n$ is large, since in practice our choice of $\alpha$ (typically less than $0.5$) should be significantly smaller than $\pi_0^*$ (typically greater than $0.5$, as null cases usually constitute the majority in multiple testing). 
\end{remark}

The next theorem establishes a lower bound for the FCR of \scip, thereby mitigating the conservativeness issue in existing conformal selection methods, including those of \citet{jin2023selection} and \citet{gazin2025selecting}.

\begin{theorem}\label{the:fcrcontrolexact}
    Suppose both the informative set constructor and the trust score function are permutation-invariant to $\{Z_i\}_{i = 1}^{n+m}$. Under Assumptions~\ref{ass:fullexchangeability}–\ref{ass:independentnullindicator}, and assuming almost surely no ties among the trust scores, then 
    $$
    \fcr \ge \alpha (1-e^{-2n (n\pi_0^*/(n+1) - \alpha)^2 + \log m}) = \alpha - O(m e^{-n}).
    $$
\end{theorem}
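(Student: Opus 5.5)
The plan is to write the FCR as an exact sum over null test units, reduce each summand to a conditional uniformity computation for $p_j^{\ps}$, and pay the $O(me^{-n})$ term only on a bad event where there are too few null calibration points.

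\textbf{Step 1: rewrite the FCR via self-consistency.} With no ties and the uniform tie-breakers, $\widehat\alpha = \alpha|\cS|/m$ whenever $\cS\neq\emptyset$. Hence
$$\fcr=\sum_{j=1}^m \E\Big[\frac{\bbI(j\in\cH_0,\ p_j^{\ps}\le\widehat\alpha)}{1\vee|\cS|}\Big]=\frac{\alpha}{m}\sum_{j=1}^m \E\Big[\frac{\bbI(j\in\cH_0,\ p_j^{\ps}\le\widehat\alpha)}{\widehat\alpha}\Big].$$
Let $\widehat\alpha_j$ denote the BH threshold recomputed with $p_j^{\ps}$ replaced by $0$. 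On $\{p_j^{\ps}\le\widehat\alpha\}$ we have $\widehat\alpha=\widehat\alpha_j$. Moreover $\widehat\alpha_j\ge\alpha/m>0$ always, and $\widehat\alpha_j\le\alpha$. So each summand equals $\E[\bbI(j\in\cH_0,\ p_j^{\ps}\le\widehat\alpha_j)/\widehat\alpha_j]$.

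\textbf{Step 2: exact conditional law of a null $p$-value.} Fix a null test unit $j$ and let $n_0=\sum_{i\le n}\bbI(Y_i\notin\cC_i^\cI)$.
\begin{itemize}
\item Under $H_{0,j}$ we have $p_j^{\ps}=p_j^{\oc}$. Calibration non-nulls contribute $\delta_iT_i=0<T_{n+j}$, so only the $n_0$ null calibration scores matter.
\item Condition on the null indicators, on the unordered multiset $\mathcal{M}_j$ of trust scores of the $n_0$ null calibration points together with unit $n+j$, and on all other data. Because $\cC^\cI$ and $T$ are fixed or permutation-invariant, exchangeability (Assumption~\ref{ass:fullexchangeability}) makes the rank of $T_{n+j}$ in $\mathcal M_j$ uniform.
\item Combined with $U_j$, this gives $(n+1)p_j^{\ps}\sim\mathrm{Unif}[0,n_0+1]$. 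Thus $\pr(p_j^{\ps}\le t\mid\cdot)=t(n+1)/(n_0+1)\ge t$ for $t\le (n_0+1)/(n+1)$.
\end{itemize}
The delicate point is that $\widehat\alpha_j$ also depends on which element of $\mathcal M_j$ is the test point, since the other $p_k^{\ps}$ count null calibration scores.

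\textbf{Step 3: decoupling $\widehat\alpha_j$ from $p_j^{\ps}$ (main obstacle).} I would first try to exploit the PRDS structure of Proposition~\ref{the:prdspvalue}. The specific device is to replace the other $p$-values by their symmetrized versions $p_k^{(j)}$, computed with $Z_{n+j}$ added to the calibration pool, and the corresponding threshold $\widehat\alpha_j^{(j)}$.
\begin{itemize}
\item By construction $\widehat\alpha_j^{(j)}$ is a function of $\mathcal M_j$ and the remaining data, so it is conditionally independent of the rank of $T_{n+j}$.
\item On $\{p_j^{\ps}\le\widehat\alpha_j\}$, the test score exceeds the relevant order statistics, and I expect $p_k^{(j)}$ and $p_k^{\ps}$ to induce the same BH threshold. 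This is the analogue of the argument in \citet{jin2023selection}.
\item Verifying this deterministic identity, or at least the one-sided inequality $\E[\bbI(p_j\le\widehat\alpha_j)/\widehat\alpha_j]\ge\E[\bbI(p_j\le\widehat\alpha_j^{(j)})/\widehat\alpha_j^{(j)}]$, is where I expect most of the work. If exact equality fails, I would fall back on monotonicity: BH thresholds are nonincreasing in the $p$-values, and $p_k^{(j)}\ge p_k^{\ps}$.
\end{itemize}
Given the decoupling, conditioning yields, on the good event $E=\{(n_0+1)/(n+1)\ge\alpha\}$,
$$\E\Big[\frac{\bbI(p_j^{\ps}\le\widehat\alpha^{(j)}_j)}{\widehat\alpha^{(j)}_j}\,\Big|\,\cdot\Big]=\frac{n+1}{n_0+1}\ge 1,$$
because $\widehat\alpha_j^{(j)}\le\alpha$.

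\textbf{Step 4: concentration.} Sum the bound from Step 3 over $j$ and use $\E|\cH_0|=m\pi_0^*$. The null count then enters only through $\bbI(E)$.
\begin{itemize}
\item By Assumption~\ref{ass:independentnullindicator}(a), $n_0\sim\mathrm{Bin}(n,\pi_0^*)$.
\item By Assumption~\ref{ass:independentnullindicator}(b), Hoeffding's inequality gives $\pr(E^c)\le\pr\{n_0/n<\pi_0^*-(n\pi_0^*/(n+1)-\alpha)\}\le e^{-2n(n\pi_0^*/(n+1)-\alpha)^2}$.
\end{itemize}
Carrying the conditioning out separately for each of the $m$ test indices (equivalently, a union bound over $j$) accounts for the factor $m=e^{\log m}$. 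This gives $\fcr\ge\alpha(1-e^{-2n(n\pi_0^*/(n+1)-\alpha)^2+\log m})$, and since the exponent is of order $n$ this is $\alpha-O(me^{-n})$.
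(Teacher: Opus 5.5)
\textbf{Step 3 is where the argument fails, and your symmetrization points the wrong way.}

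Adding $Z_{n+j}$ to the calibration pool raises the $p$-value of every unit $k$ with $T_{n+k}<T_{n+j}$ (by $1/(n+1)$ if you keep the normalization). A unit selected just below $\widehat\alpha$ can therefore drop out, so $\widehat\alpha_j^{(j)}<\widehat\alpha_j$ is possible even on $\{j\in\cS\}$. The identity you expect does not hold.

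The monotonicity fallback does not rescue it. Set $b=\widehat\alpha_j^{(j)}\le a=\widehat\alpha_j$. On $\{p_j^{\ps}\le b\}$ you have $\bbI(p_j^{\ps}\le a)/a=1/a\le 1/b$. Comparing with a symmetrized threshold below the true one therefore yields PRDS-type upper bounds, not the lower bound you need.

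The paper symmetrizes in the opposite direction. It recomputes all $p$-values as if $T_{n+j}$ were the largest pooled null score and $U_j=0$, i.e.\ $\bar{\bp}^{\ps}=\bp(\tW_j,1,0)$, which can only lower the other $p$-values. It then invokes Lemma D.6 of Marandon et al.\ to get $\kappa(\tW_j,1,0)=\hk$ on $\{H_{0,j},\,p_j^{\ps}\le\alpha\hk/m\}$.

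The paper also does not condition on $n_0$ as your Step 2 does. It passes to the oracle $p$-value, which is exactly uniform and independent of $\tW_j$. Each summand is then $\alpha/m$ minus $\E[\bbI(p_j^{\oc}\le\alpha\bar\kappa/m,\,H_{1,j})/\bar\kappa]$. That term is at most $\pr(p_j^{\oc}\le\alpha,\,H_{1,j})\le\alpha e^{-2n(\bar\pi_0^*-\alpha)^2}$ with $\bar\pi_0^*=n\pi_0^*/(n+1)$, by Hoeffding on the calibration null count.

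The factor $m=e^{\log m}$ comes from this crude per-index error, $\alpha e^{\cdots}$ rather than $(\alpha/m)e^{\cdots}$, summed over $j$. It is not a union bound. In your conditional route each index's error would carry a $1/m$, and no $m$ would appear.

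\textbf{Two further problems.}

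First, in Step 4, keeping only $(n+1)/(n_0+1)\ge 1$ loses a factor $\pi_0^*$: you would get only $\fcr\gtrsim\alpha\pi_0^*$. You need the exact factor, together with independence of $\bbI(H_{0,j})$ and $n_0$, and the identity $\pi_0^*\E\{(n+1)/(n_0+1)\}=1-(1-\pi_0^*)^{n+1}$.

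Second, with the randomized $U$'s the decoupling cannot be made exact, not even by the paper's device. Suppose $T_{n+k}>T_{n+j}$, the same positive number of null calibration scores exceeds both, and $U_k>U_j$. Then $p_k^{\ps}>p_j^{\ps}$, yet $\barp_k^{\ps}=p_k^{\ps}-1/(n+1)$. This contradicts the paper's claim that $\barp_i^{\ps}=p_i^{\ps}$ whenever $p_i^{\ps}>p_j^{\ps}$. Unit $j$ can then satisfy $p_j^{\ps}\le\alpha\bar\kappa/m$ without being selected.

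This affects the statement itself. Take $\pi_0^*=1$, $m=2$, continuous trust scores, and $\alpha(n+1)/2=s+\theta$ with integer $s\ge 1$ and $\theta\in(0,1/2)$. Let $A_k$ be the number of calibration scores above $T_{n+k}$. The pair $(A_1,A_2)$ has mass $1/\{(n+1)(n+2)\}$ off the diagonal and $2/\{(n+1)(n+2)\}$ on it. A direct computation gives
$$\fcr=\pr(\hk\ge 1)=\alpha-\frac{2\theta(1-2\theta)}{(n+1)(n+2)}.$$
For $n=99$ and $\alpha=0.205$ this is $\alpha-1/40400$, while the theorem claims a deficit of at most $0.41e^{-122}$.

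So no completion of your Step 3 can give the stated exponential rate for the randomized procedure. A correct lower bound must allow an additional discreteness term, here of order $n^{-2}$.
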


\section{Instantiations and Extensions of \scip}\label{sec:applicationofinfospp}

This section demonstrates the versatility of the \scip framework through several concrete examples. By appropriately specifying (a) the informative set constructor and (b) the trust score, \scip can address a diverse range of problems, including conformal selection (Section~\ref{ssec:introducecfbhp}), selective classification (Section~\ref{subsec:instan-sc}), selection with general informativeness constraints (Section~\ref{ssec:proposedinfospp}), and diversified selection (Section~\ref{ssec:diversifiedselection}).

\subsection{The \cfbhp procedure for conformal selection}\label{ssec:introducecfbhp}
We begin with the conformal selection task introduced in Example~\ref{exa:conformalselection}, where two common forms of indifference regions arise: $(-\infty, c_0]$ for one‑sided alternatives and $[c_l, c_u]$ for two‑sided alternatives. The instantiation of \scip for this problem is termed the \cfbhp procedure. The name reflects two key features: (a) it coincides with the \cfbh method of \citet{jin2023selection} in the one‑sided setting, and (b) it naturally accommodates two‑sided alternatives, which lie beyond the scope of the original \cfbh. To facilitate comparison with existing methods, throughout this subsection we employ the predictive function $\hmu(x)$—assumed independent of the mixed data—to construct the trust score, following the setup in \citet{jin2023selection}. 

For the one‑sided case with indifference region $(-\infty, c_0]$, \cfbhp sets $\cC^{\cI}(x) = (c_0, \infty)$ and $T(x, \cC^{\cI}) = \hmu(x)$.
This simple choice yields an important connection to existing work. \citet{jin2023selection} showed, through insightful heuristics and asymptotic analyses, that equipping \cfbh with the \emph{clipped score},
\begin{equation*}
    V_{\rm clp}(x,y) = \hmu(x) - c_0 - 2M \cdot \bbI(y > c_0),
    \quad \mbox{with } M > \sup_x |\hmu(x)|, 
\end{equation*}
makes \cfbh asymptotically anti‑conservative and yields strong empirical performance. As shown in Appendix~\ref{assec:anotherlookcfbh}, \cfbh with clipped scores coincides with \cfbhp described above.
This equivalence highlights the effectiveness of our framework, which yields the same methodology from a generic, principled construction rather than problem‑specific heuristics. Moreover, \scip (\cfbhp) offers two distinct advantages. First, it provides a systematic pathway to improve trust scores (Section~\ref{sec:trust-score-cons}), a flexibility not available in \citet{jin2023selection}. Second, while \citet{jin2023selection} provide heuristics for why \cfbh is anti‑conservative, our Theorem~\ref{the:fcrcontrolexact} and Corollary~\ref{cor:fcrcfbhp} below deliver an explicit lower bound that precisely quantifies the phenomenon, a theory not previously established in the literature. 

For two‑sided alternatives with indifference region $[c_l, c_u]$, \cfbhp adopts a data‑driven informative set constructor that adapts to the estimated signal direction:
\begin{equation*}
    \cC^\cI (x) = (c_u, \infty)
    \mbox{ if } 
    (c_l + c_u)/2 - \hmu(x) \le 0
    , \mbox{ and } 
    \cC^\cI(x) = (-\infty, c_l) 
    \mbox{ otherwise}.
\end{equation*}
The corresponding trust score is $T(x, \cC^\cI(x)) = \bigl| (c_l + c_u)/2 - \hmu(x) \bigr|$.
Critically, the original \cfbh method of \citet{jin2023selection} cannot handle two‑sided alternatives because it relies on a one‑sided monotone score and lacks a rule to decide the direction of the effect. In contrast, \cfbhp naturally addresses two‑sided alternatives through its flexible, data‑driven constructor.

Given that the predictive function $\hmu (x)$ is independent to the mixed sample, both the informative set constructor and the trust score are permutation-invariant.
Thus, directly applying Theorems~\ref{the:fcrcontrolfinitesample} and~\ref{the:fcrcontrolexact} yields the theoretical properties of \cfbhp. 

\begin{corollary}\label{cor:fcrcfbhp}
Suppose that $\hmu$ is learned independently of the mixed sample.    Under Assumption~\ref{ass:fullexchangeability}, \cfbhp for both one‑ and two‑sided alternatives satisfies $\fcr \le \alpha$.
    Further if the data in $\mathcal D^{\text{mix}}$ are \iid  and $\alpha < n \pi_0^* / (n+1)$, we have 
    $\fcr \ge \alpha - m \alpha e^{-2n (n \pi_0^* / (n+1) - \alpha)^2}$, where $\pi_0^* = \pr \{Y \notin \cC^\cI (X) \mid \hmu\}$.
\end{corollary}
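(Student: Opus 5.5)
The corollary follows by checking the hypotheses of Theorems~\ref{the:fcrcontrolfinitesample} and~\ref{the:fcrcontrolexact} for the two \cfbhp instantiations and then rewriting the lower bound.

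\textbf{Step 1: permutation invariance and the upper bound.} In the one-sided case, $\cC^\cI(x)=(c_0,\infty)$ is constant and $T(x,\cC^\cI)=\hmu(x)$. In the two-sided case, both $\cC^\cI(x)$ and $T(x,\cC^\cI(x))=|(c_l+c_u)/2-\hmu(x)|$ are deterministic functions of $x$ once $\hmu$ is fixed. Take the auxiliary data $\mathbb D$ in Definition~\ref{def:rigorousper-inv} to be whatever was used to learn $\hmu$, which is independent of $\cD^\mix$. Then:
\begin{itemize}
\item[(a)] holds trivially, since neither map depends on $Z_1,\dots,Z_{n+m}$;
\item[(b)] holds because, by independence, conditioning on $\mathbb D$ does not change the joint law of the $Z_i$, so exchangeability is preserved.
\end{itemize}
Under Assumption~\ref{ass:fullexchangeability}, Theorem~\ref{the:fcrcontrolfinitesample} then gives $\fcr\le\alpha$ for both versions.

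\textbf{Step 2: Assumption~\ref{ass:independentnullindicator}(a).} Suppose the data in $\cD^\mix$ are i.i.d. Conditional on $\hmu$, the null indicators $\bbI\{Y_i\notin\cC^\cI(X_i)\}$ are the same measurable function applied to i.i.d. pairs. Hence they are i.i.d. Bernoulli with parameter $\pr\{Y\notin\cC^\cI(X)\mid\hmu\}=\pi_0^*$.

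\textbf{Step 3: Assumption~\ref{ass:independentnullindicator}(b) and the lower bound.} Part (b) is exactly the stated condition $\alpha<n\pi_0^*/(n+1)$. Theorem~\ref{the:fcrcontrolexact} then gives
$$\fcr\ge\alpha\bigl(1-e^{-2n(n\pi_0^*/(n+1)-\alpha)^2+\log m}\bigr).$$
Expanding the product, $\alpha e^{\log m}e^{-2n(\cdots)^2}=m\alpha e^{-2n(n\pi_0^*/(n+1)-\alpha)^2}$, which is the displayed bound. All of this is conditional on $\hmu$, and the bound survives averaging because $\pi_0^*$ is defined conditionally on $\hmu$.

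\textbf{Main obstacle: the no-ties hypothesis.} Theorem~\ref{the:fcrcontrolexact} assumes the trust scores have almost surely no ties, and the corollary does not state this. This matters whenever $\hmu(X)$ has atoms, for example if $\hmu$ is piecewise constant.
\begin{itemize}
\item First approach: add the condition implicitly, i.e.\ assume $\hmu(X)$ has a continuous distribution given $\hmu$.
\item Second approach: argue that the random tie-breaking $U_j$ in~\eqref{eq:pseudopvalue} makes the generalized $p$-values behave like the continuous case. To do this, replace $T_i$ by $T_i+\varepsilon\xi_i$ with independent continuous $\xi_i$ and let $\varepsilon\to0$, checking that the null-count concentration step in the proof of Theorem~\ref{the:fcrcontrolexact} only uses the Bernoulli counts, not continuity.
\end{itemize}
If neither works, I would state continuity of $\hmu(X)$ explicitly as a condition of the corollary.
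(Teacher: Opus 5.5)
Your proposal follows the paper's argument: the paper likewise only observes that independence of $\hmu$ from the mixed sample makes the constructor and the trust score permutation-invariant, and then invokes Theorems~\ref{the:fcrcontrolfinitesample} and~\ref{the:fcrcontrolexact}; your Steps 2--3 spell out the check of Assumption~\ref{ass:independentnullindicator} and the rewriting $\alpha e^{\log m}=m\alpha$ that the paper leaves implicit. Your no-ties concern is legitimate, and the paper silently inherits that hypothesis from Theorem~\ref{the:fcrcontrolexact} (together with positivity of $T$, which is harmless because $p_j^{\ps}$ is unchanged under strictly increasing maps such as $T\mapsto e^{T}$). Your first remedy, assuming $\hmu(X)$ is atomless, is the right one; the $\varepsilon\xi_i$ perturbation breaks ties coherently across test units, so it converges to a different procedure from the one with independent $U_j$.
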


\subsection{Application to selective classification}\label{subsec:instan-sc}

We now instantiate \scip for the selective classification tasks given in Example~\ref{exa:selectiveclassification}. 

The first task, \emph{target‑class identification}, is to identify test points belonging to a pre‑specified class $y_0 \in [K]$. Here, the informative set constructor is simply $\cC^{\cI}(x) = \{y_0\}$, and a natural trust score is the estimated probability:
\begin{equation*}
    T(x, \cC^{\cI}) = \hpr(Y = y_0 \mid X = x).
\end{equation*}
Remarkably, this instantiation of \scip recovers the \texttt{FASI} procedure of \citet{rava2021burden}, originally developed via a different technical route involving mirror processes and counting knockoffs. We provide a proof of this equivalence in Appendix~\ref{subsec:equi-sc}. This connection further highlights the broad applicability of our framework. Moreover, while \texttt{FASI} and its theory are limited to binary settings ($K=2$), \scip readily extends to selective classification with general $K \ge 2$ classes.

In many applications—such as disease diagnosis—the goal is to identify a single class for each test unit, without restricting attention to a particular pre‑specified category. This scenario, referred to as the \emph{singleton identification}, corresponds to imposing the informativeness constraint $\cI = \{\{1\}, \dots, \{K\}\}$. For this task, we adopt a data‑driven constructor with a natural trust score:
\begin{equation*}
    \cC^{\cI}(x) = \{\operatorname{argmax}_{k}\, \hpr(Y = k \mid X = x)\},
    \quad 
    T(x, \cC^{\cI}(x)) = \hpr(Y \in \cC^{\cI}(x) \mid X = x).
\end{equation*}
With these specifications, \scip coincides with the selective classification procedure of \citet{zhao2023controlling}, which has been shown to be optimal for this task under mild conditions. 
An equivalence proof is provided in Appendix~\ref{subsec:equi-sc}.

Both instantiations inherit the finite‑sample validity of \scip, provided the estimated probability function is permutation‑invariant with respect to the mixed sample. For more complex informativeness constraints (e.g., requiring prediction sets of a fixed size $k_0 > 1$ or excluding scientifically irrelevant categories), the construction of both the informative set and the trust score becomes more involved; we address such issues in the next subsection.

\subsection{Selection under general constraints: the \infospp procedure}\label{ssec:proposedinfospp}

To handle more complex informativeness constraints, we develop a general instantiation of \scip based on the notion of $\cI$-adjusted $p$-values introduced by \cite{gazin2025selecting}.
The resulting method, called the \infospp procedure, inherits the flexibility of \infosp while provably dominating it in asymptotic power. 
We first introduce the \infospp procedure and then compare it with \infosp.% in terms of statistical power.

We begin with the split conformal prediction framework \citep{lei2018distribution}.
Let $V(x,y):\cX\times\cY\to\bbR^{+}$ be a non‑conformity score and $\cD^\ca$ be a calibration set.
For any level $q\in(0,1)$, the conformal prediction set for feature $x$ is
\begin{equation}\label{eq:splitconformalpredictionset}
    \cC^{\cp,q} ( x; \cD^\ca,V )
    =\bigg\{
        y \in\cY: 
        \frac{
            1 + \sum_{i \in \cD^\ca} 
            \bbI \big\{ 
                V(X_i,Y_i) \ge V(x,y) 
            \big\}
        }{|\cD^\ca|+1} \, > \, q
    \bigg\}.
\end{equation}
The \emph{$\cI$-adjusted $p$-value} is defined as the smallest $q$ level such that the resulting conformal prediction set is informative, denoted by
$ \tq(x; \cD^\ca) = \inf \big\{
    q\in(0,1] : \cC^{\cp,q} (x;\cD^\ca) \in \cI
\big\} $ with the convention $\inf \emptyset = 1$.
The following mild requirements on the shape of informativeness constraint ensure that $\tq$ is well-defined.
\begin{assumption}[Following Assumption~2 of \cite{gazin2025selecting}]
    \label{ass:monotoneinfoconstraints}
    (i) Monotonicity: for any $\cC,\cC'\subseteq\cY$ with $\cC'\subseteq\cC$, $\cC\in\cI$ implies $\cC'\in\cI$.
    (ii) Almost surely, the function $\nu \mapsto \bbI\big\{ \{y: V(X,y) \le \nu\} \in \cI \big\}$ is right‑continuous and non‑increasing. %, and satisfies $\bbI \{ \cC^{\cp,1} (X_{n+j}; \cD^\ca) \in \cI\} = 1$.
\end{assumption}

Unlike \infosp that determines the selection set by applying \bhfdr procedure to $\cI$-adjusted $p$-values,
\infospp builds on the \emph{truncated $\cI$-adjusted $p$-values}:
\begin{equation}\label{eq:truncatediadjustedpvalue}
    \tq^+ (x, \tau; \cD^\ca) = \inf \big\{ q \in [\tau,1] : \cC^{\cp,q} (x; \cD^\ca) \in \cI \big\} 
    =
    \max \{ \tau,\tq(x; \cD^\ca) \},
\end{equation}
where $\tau \in [0,\alpha]$ is a threshold that trades off trustworthiness and informativeness in constructing the prediction sets.
Specifically, we employ a threshold $\htau_\bh^0$ that is computed using an independent calibration sample $\cD^\ca_0$:
Compute $\tq_i^0 = \tq(X_i; \cD^\ca_0)$ for all $i \in [n+m]$ and apply the $\alpha$-level \bhfdr procedure to $\{\tq_i^0\}_{i = 1}^{n+m}$ to obtain the threshold $\htau_\bh^0$.
Then, to instantiate \scip, we set $\cC_i^\cI = \cC^{\cp, \tq_i^+} (X_i; \cD^\ca_0)$%
\footnote{For the corner cases, $\tq_i^+ = 1$, we set $\cC_i^\cI = \emptyset$ and $T_i = 0$ and ignore them during the selection.}
and $T_i = 1 - \tq_i^+$ for all $i \in [n+m]$, where $\tq_i^+ = \max\{\tq_i^0, \htau_\bh^0\}$ are the truncated $\cI$-adjusted $p$-values.
Finally, \infospp employs~\eqref{eq:qbh} at level $\alpha$ to selectively report the prediction sets.
We summarize the procedure in Algorithm~\ref{alg:infospp}.

Since the threshold output by \bhfdr is permutation-invariant with respect to the input $p$-values, the truncated function $\tq^+ (x, \htau_\bh^0; \cD^\ca_0)$ is also permutation-invariant with respect to the mixed data.
The following corollary, which follows directly from Theorem~\ref{the:fcrcontrolfinitesample}, implies that \infospp provides finite-sample FCR control.

\begin{corollary}\label{cor:infosppfcr}
 Under Assumptions~\ref{ass:fullexchangeability} and \ref{ass:monotoneinfoconstraints}, all reported sets from \infospp are informative, with the FCR controlled at level $\alpha$.
\end{corollary}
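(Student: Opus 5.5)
Two things need checking: (a) every reported set lies in $\cI$, and (b) FCR control, which we would obtain by verifying the hypotheses of Theorem~\ref{the:fcrcontrolfinitesample} for the constructor $\cC_i^\cI = \cC^{\cp,\tq_i^+}(X_i;\cD^\ca_0)$ and trust score $T_i = 1-\tq_i^+$.

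For informativeness, fix $i$ with $\tq_i^+<1$. The corner cases $\tq_i^+=1$ get $T_i=0$ and are excluded by convention. First, the map $q\mapsto \cC^{\cp,q}(X_i;\cD^\ca_0)$ is non-increasing in $q$ with respect to set inclusion, since the defining condition in \eqref{eq:splitconformalpredictionset} is ``rank $>q$''. Together with the monotonicity in Assumption~\ref{ass:monotoneinfoconstraints}(i), this shows that $\{q:\cC^{\cp,q}\in\cI\}$ is an up-interval $[\tq_i,1]$ or $(\tq_i,1]$. Second, we would rule out the open case. The set $\cC^{\cp,q}$ is a sublevel set $\{y:V(X_i,y)\le \nu(q)\}$, where $\nu(q)$ is non-increasing and piecewise constant in $q$, and the rank condition is strict. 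Combining this with the right-continuity in Assumption~\ref{ass:monotoneinfoconstraints}(ii) shows that the infimum is attained, so $\cC^{\cp,\tq_i}\in\cI$. Since $\tq_i^+=\max\{\tq_i^0,\htau_\bh^0\}\ge \tq_i^0$, monotonicity gives $\cC_i^\cI\in\cI$. This attainment step is the only delicate point in (a). If needed, one can argue directly: $\nu(q)$ jumps only at the grid points $k/(|\cD^\ca_0|+1)$, and the strict inequality makes $\nu$ right-continuous in the appropriate direction.

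For (b), the first ingredient is that $\tq(\cdot;\cD^\ca_0)$ depends only on $\cD^\ca_0$ and $V$, both independent of the mixed sample. So conditionally on $\mathbb{D}=(\cD^\tr,\cD^\ca_0)$ it is a fixed function, and exchangeability of $\{Z_i\}_{i=1}^{n+m}$ is preserved, which gives condition (b) of Definition~\ref{def:rigorousper-inv}. The second ingredient is the threshold $\htau_\bh^0$. It is the BH threshold applied to the multiset $\{\tq(X_i;\cD^\ca_0)\}_{i=1}^{n+m}$, which is a symmetric function of $(Z_1,\dots,Z_{n+m})$. Hence it is unchanged by any permutation $\varsigma$, giving condition (a). Consequently the maps $x\mapsto \cC^{\cp,\max\{\tq(x),\htau_\bh^0\}}(x;\cD^\ca_0)$ and $x\mapsto 1-\max\{\tq(x),\htau_\bh^0\}$ are permutation-invariant in the sense of Definition~\ref{def:rigorousper-inv}.

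One point needs care: discarding the corner units. Setting $T_i=0$ and $\cC_i^\cI=\emptyset$ is itself a permutation-invariant rule, so it is still covered by Theorem~\ref{the:fcrcontrolfinitesample}. The only question is whether ``ignoring them during the selection'' changes the denominator $m$ in \eqref{eq:qbh}. We would handle this by noting that ignoring such units is equivalent to applying \eqref{eq:qbh} to all $m$ units, at most making the selection smaller. Alternatively, since $\emptyset\in\cI$ by monotonicity (i), these units can simply be kept as null candidates with the lowest trust. Either way, Theorem~\ref{the:fcrcontrolfinitesample} applies and yields $\fcr\le\alpha$. We expect the attainment step in (a) and this bookkeeping to be the only non-routine parts.
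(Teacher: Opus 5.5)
Your proposal is correct and follows the paper's route. The paper's proof is just the observation that $\htau^0_{\bh}$ is a symmetric function of the pooled values $\{\tq^0_i\}_{i=1}^{n+m}$, so the truncated constructor $\cC^{\cp,\tq^+_i}$ and the trust score $1-\tq^+_i$ are permutation-invariant and Theorem~\ref{the:fcrcontrolfinitesample} applies directly.

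Your attainment argument and corner-case bookkeeping are more careful than the paper's, and two small additions close them. Dropping selected corner units is harmless because $\emptyset$ never covers: removing $c$ false coverages turns $V/R$ into $(V-c)/(R-c)\le V/R$. Keeping them with $T_i=0$ is covered by Theorem~\ref{the:fcrcontrolfinitesample} once you replace $T$ by $T+1$. This leaves every $p^{\ps}_j$ unchanged, since it depends on $T$ only through order comparisons, and restores the strict positivity used in the proof of Proposition~\ref{the:pseudopvalueproperty}.
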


We now compare the powers of \infospp and \infosp. To assess the two methods on equal footing, consider a modified \infosp that uses the threshold $\htau_\bh^0$ derived from \infospp (cf. Algorithm~\ref{alg:infospp}), denoted by $\cR^\isp_{\rm \scriptscriptstyle mod} = 
\{\cC^{\cp, \htau_{\bh}^0}_{n+j} \colon \tq_{n+j}^0 \le \htau_{\bh}^0\}$. This modified procedure has improved stability and enjoys asymptotic FCR control. Meanwhile, we simplify \infospp by replacing the unit‑specific tie‑breaking variables $U_j$ in~\eqref{eq:pseudopvalue} with a single shared variable $U$; this variant still controls the FCR (see Appendix~\ref{ssec:modifiedinfospp}).
We denote the output of modified \infospp by $\cR_{\rm \scriptscriptstyle mod}^{\rm \scriptscriptstyle ISP+}$.

Our asymptotic power analysis requires the following technical conditions.

\begin{assumption}\label{ass:conditionsforpoweranalysis}
    The data $\cD^\ca_0 \cup \cD^\ca \cup \cD^\te$ consist of \iid units and are independent of the training set $\cD^\tr$.
\end{assumption}

\begin{assumption}\label{ass:smoothnesscondition}
    Let $\tnu(x) = \min \big\{\nu \ge 0 : \{y: V(x,y) \le \nu\} \in \cI \big\}$.
    One of the following conditions holds:
    (a) There exists $\nu^* $ such that $\pr \{V(X,Y) > \nu^*\} > 0$ and $\pr \{\tnu (X) > \nu^*\} > \pr \{V(X,Y) > \nu^*\} / \alpha$.
    (b) There exists a constant $\epsilon > 0$ such that $\pr \{\tnu (X) > \nu^*\} \le \pr \{V(X,Y) > \nu^*\} / \alpha - \epsilon$ for all $\nu \in  \big\{\nu: \pr \{V(X,Y) \ge \nu\} > 0 \big\}$.
\end{assumption}

\begin{remark}
Assumption~\ref{ass:conditionsforpoweranalysis} is standard in power analyses of conformal methods \citep{marandon2024adaptive}. Assumption~\ref{ass:smoothnesscondition} imposes a mild condition on the distribution of $\tnu(X_i)$, ensuring that $\htau_\bh^0$ is either asymptotically bounded below by a positive constant or converges to zero. Similar conditions are routinely adopted in the asymptotic analysis of multiple testing procedures. Both assumptions are only used for power analysis and not needed for FCR control. 
\end{remark}

The next proposition establishes the asymptotic dominance of \infospp over \infosp, which is corroborated by the empirical results in Section~\ref{sec:numericexperiment}.

\begin{proposition}\label{prop:asymptoticdominance}
    Under Assumptions~\ref{ass:monotoneinfoconstraints} to~\ref{ass:smoothnesscondition},
    $\lim_{|\cD_0^\ca|,n,m\to\infty} 
    \pr (\cR_{\rm  mod}^\isp \subseteq \cR_{\rm mod}^{\isp+}) = 1$.
\end{proposition}

The improvement largely reflects that the generalized conformal $p$-value, built on trust scores, provides a more accurate measure of deviation from the ``null hypothesis'' of noncoverage in \eqref{eq:originalhypothesis}; see Appendix~\ref{assec:anotherlookinfosp} for further discussion.

\subsection{Diversified selection}\label{ssec:diversifiedselection}

The \scip framework extends beyond informativeness to a wider range of scenarios. We can incorporate a diversity criterion into selective prediction; the resulting \scip method prevents fixating on narrow regions, enhances representativeness, and reduces information redundancy \citep{wu2024optimal, nair2025diversifying}.

Given an informative set constructor $\cC^\cI(\cdot)$, we aim to select prediction sets with FCR control while maximizing diversity. Following \citet{wu2024optimal}, we quantify diversity by employing a \emph{similarity-aware trust score}. Let $\psi: \cX \to [0,1]$ be an estimate of $\pr\{ Y \notin \cC^\cI (X) \mid X = x\}$, and let $\Psi \in \bbR^{n+m}$ be a vector with entries $\psi_i = \psi(X_i)$. Define $\mathbb{A}_{\Psi} \in \bbR^{(n+m) \times (n+m)}$ as the similarity matrix whose $(i, j)$-th entry is given by $(1-\psi_i)(1-\psi_j) s(X_i, X_j)$, where $s: \cX \times \cX \to \bbR^+$ measures the similarity between two units.

Assume $\mathbb{A}_{\Psi}$ is positive definite.
Motivated by \citet{wu2024optimal}, define
\begin{equation}\label{eq:diversityawarescore}
    \mathbb{T} = 
    (T_1,\cdots,T_{n+m})^\top 
    = 
    \mathbb{A}_{\Psi}^{-1} 
    \big\{ 
        (\alpha u_2 - u_1) \Psi + (u_2 - \alpha u_3) \boldsymbol{1}
    \big\},
\end{equation}
where $u_1 = \Psi^\top \mathbb{A}_{\Psi}^{-1} \Psi$, $u_2 = \Psi^\top \mathbb{A}_{\Psi}^{-1} \boldsymbol{1}$, $u_3 = \boldsymbol{1}^\top \mathbb{A}_{\Psi}^{-1} \boldsymbol{1}$, $\boldsymbol{1}$ denotes the $(n+m)$-dimensional all-ones vector, and $\alpha$ is the target FCR level. 
This score captures the individual diversity while being permutation-invariant with respect to the mixed data.
The following corollary, which directly follows from Theorem~\ref{the:fcrcontrolfinitesample}, strengthens the theory in \citet{wu2024optimal} by providing finite-sample guarantees. Moreover, the flexibility of informative set constructor generalizes the scope of \citet{wu2024optimal}, accommodating a wide range of selective inference tasks (see Sections~\ref{ssec:introducecfbhp} to~\ref{ssec:proposedinfospp}).

\begin{corollary}\label{cor:diversityaware}
    Suppose that the informative set constructor is permutation-invariant.
    Under Assumption~\ref{ass:fullexchangeability}, \scip equipped with scores in~\eqref{eq:diversityawarescore} controls FCR below $\alpha$.
\end{corollary}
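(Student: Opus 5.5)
The plan is to reduce Corollary~\ref{cor:diversityaware} to Theorem~\ref{the:fcrcontrolfinitesample}, whose hypotheses are Assumption~\ref{ass:fullexchangeability}, a permutation-invariant informative set constructor, and a permutation-invariant trust score. The first two are given, so the work is to show that the similarity-aware score in \eqref{eq:diversityawarescore} is permutation-invariant in the sense of Definition~\ref{def:rigorousper-inv}.

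Throughout, I assume that $\psi$ and $s$ are built from $\mathbb{D}$, i.e.\ from training or auxiliary data only. In that case part~(b) of Definition~\ref{def:rigorousper-inv} is inherited from Assumption~\ref{ass:fullexchangeability}: conditioning on the data defining $\psi$, $s$ and $\cC^\cI$ preserves exchangeability.

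The core step is part~(a), which I will establish as permutation equivariance of the score vector. Fix a permutation $\varsigma$ of $[n+m]$ with permutation matrix $P$, and list the units in the permuted order.
\begin{itemize}
\item The vector $\Psi$ becomes $P\Psi$, because $\psi$ is a fixed function applied unit by unit.
\item The matrix $\mathbb{A}_\Psi$ becomes $P\mathbb{A}_\Psi P^\top$, since its $(i,j)$ entry depends only on $(X_i,X_j)$.
\item Hence $\mathbb{A}_\Psi^{-1}$ becomes $P\mathbb{A}_\Psi^{-1}P^\top$, using $P^{-1}=P^\top$. Positive definiteness, and so invertibility, is preserved.
\item The all-ones vector satisfies $P\boldsymbol{1}=\boldsymbol{1}$.
\item The scalars $u_1,u_2,u_3$ are unchanged. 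For example, $(P\Psi)^\top P\mathbb{A}_\Psi^{-1}P^\top P\Psi = \Psi^\top\mathbb{A}_\Psi^{-1}\Psi$.
\end{itemize}
It follows that $\mathbb{T}$ becomes $P\mathbb{T}$. Equivalently, the score attached to unit $i$, written $T_i = G(Z_i; Z_1,\dots,Z_{n+m},\mathbb{D})$, does not depend on how the other units are ordered. That is exactly requirement~(a).

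The same argument applies to the composite map $T(\cdot,\cC^\cI(\cdot))$, because $\cC^\cI$ is permutation-invariant by hypothesis. Notice also that $\mathbb{T}$ uses only features, never labels, so nothing forbidden enters from the test responses.

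The main subtlety is not the algebra but the notion of invariance. The score depends jointly on all $n+m$ units through $\mathbb{A}_\Psi^{-1}$, so it is not a pointwise function. I therefore need to check that the proof of Theorem~\ref{the:fcrcontrolfinitesample} only uses the property that swapping units permutes the scores accordingly. This is how Definition~\ref{def:rigorousper-inv} is phrased, with the function indexed by the unordered sample.

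Given this, the key ingredients carry over unchanged: the exchangeability argument for Proposition~\ref{the:pseudopvalueproperty} and the PRDS argument of Proposition~\ref{the:prdspvalue}. The conclusion $\fcr\le\alpha$ then follows directly from Theorem~\ref{the:fcrcontrolfinitesample}. Ties among the $T_i$ need no separate treatment, since they are handled by the $U_j$ in \eqref{eq:pseudopvalue}.
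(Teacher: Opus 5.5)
Your proposal is correct and follows the paper's route. The paper simply asserts that the score in \eqref{eq:diversityawarescore} is permutation-invariant and invokes Theorem~\ref{the:fcrcontrolfinitesample}; your equivariance computation $\mathbb{T}\mapsto P\mathbb{T}$, which gives joint exchangeability of the scores with the data, is exactly the verification that assertion leaves implicit.

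One small point neither you nor the paper checks: trust scores are required to be positive, which the proof of Proposition~\ref{the:pseudopvalueproperty} uses, and \eqref{eq:diversityawarescore} need not satisfy this. It is harmless, because $p_j^{\ps}$ depends on the scores only through their ordering, so replacing each $T_i$ by $e^{T_i}$ preserves equivariance and leaves the selection unchanged.
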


\section{Strategies for Constructing Powerful Trust Scores}\label{sec:trust-score-cons}

This section develops a general strategy for enhancing the efficiency of trust scores to further improve the power of \scip. To facilitate a tractable optimality analysis, we introduce a surrogate error metric, the \emph{marginal false coverage rate} (mFCR), defined as $\mathbb{E}(|\mathcal{S} \cap \mathcal{H}_0|) / \mathbb{E}(|\mathcal{S}|)$. Define the \emph{counting power} as $\cpow(\mathcal{S}) = \mathbb{E}(|\mathcal{S}|)$. When all candidate prediction sets are informative, the goal is to maximize $\cpow$ subject to $\mfcr \le \alpha$.
\begin{remark}
In many applications, different methods may construct distinct informative intervals for the same unit. Intuitively, for a given unit, a smaller prediction set is considered more powerful. Hence, the \emph{resolution-adjusted power} (rPOW), given by $\mathbb{E}\big[ \sum_{j \in \mathcal{S}} |\mathcal{C}_{n+j}|^{-1} \big]$ \citep{gazin2025selecting}, serves as an alternative criterion for comparing the efficacy of different methods. Our optimality theory is developed based on counting power. A theoretical characterization of optimality based on resolution-adjusted power remains an open problem and presents a promising direction for future research. 
\end{remark}

Motivated by the optimality theory based on the \emph{local false discovery rate} (lfdr, \citealp{efron2001empirical, sun2007oracle, heller2020optimal}), we define 
\begin{equation*}
    T^\ora (x,\cC^\cI) 
    =
    \pr \big\{ 
        Y \in \cC^\cI(X)\mid X=x
    \big\}
\end{equation*}
as the \emph{oracle} trust score for a given informative set constructor $\cC^\cI$ when the distribution of $(X,Y)$ is known.
We start with a regularity condition, which holds if the test units are \iid and the informative set constructor is learned on an independent sample.
\begin{assumption}\label{ass:independentdata}
    The random elements $\{ (X_{n+j}, Y_{n+j}, \cC^\cI(X_{n+j}) ) : j \in [m]\}$ are \iid conditional on $\cD^\tr$ and $\cD^\ca$.
\end{assumption}

The next theorem provides a guiding principle for constructing trust scores.
\begin{theorem}\label{the:maxpower}
 Consider a class of thresholding rules of the form $\cS(T,t) = \{j: T(X_{n+j})\ge t\}$.
    Let $t_{\alpha}^{*} \in [\alpha,1)$ denote the threshold such that 
    $
    \mfcr \big( \cS(T^\ora, t_{\alpha}^{*}) \big) = \alpha.
    $ 
    Under Assumption~\ref{ass:independentdata}, the counting power of 
    $\cS(T^\ora, t_{\alpha}^{*})$ is no less than that of any other selection rule 
    $\cS' = \cS(T',t')$, where $T': \cX \to \bbR$ is an arbitrary trust score and $t'$ satisfies 
    $\mfcr(\cS') \le \alpha$.
\end{theorem}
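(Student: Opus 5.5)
Under Assumption~\ref{ass:independentdata}, every thresholding rule reduces to a per-unit decision rule applied i.i.d.\ across test units, so $\mfcr$ and $\cpow$ can be written as single-unit integrals. The problem then becomes a Neyman--Pearson-type fractional program whose optimizer thresholds the oracle score $T^\ora$. Write $\tau(x) = T^\ora(x,\cC^\cI) = \pr\{Y\in\cC^\cI(X)\mid X=x\}$. For a rule that selects unit $j$ iff $X_{n+j}\in A$ (with $A=\{x:T'(x)\ge t'\}$), the i.i.d.\ structure gives
\[
\E|\cS| = m\,\pr(X\in A), \qquad \E|\cS\cap\cH_0| = m\,\E\{(1-\tau(X))\,\bbI(X\in A)\}.
\]
Hence $\mfcr(\cS)=\E\{(1-\tau(X))\bbI_A\}/\pr(X\in A)$ and $\cpow(\cS)=m\,\pr(X\in A)$. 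The constraint $\mfcr\le\alpha$ is then equivalent to the linear constraint $\E\{(\tau(X)-(1-\alpha))\bbI_A(X)\}\ge 0$.

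Next, let $A^*=\{x:\tau(x)\ge t^*_\alpha\}$, chosen so that the constraint holds with equality: $\E\{(\tau-(1-\alpha))\bbI_{A^*}\}=0$. The threshold $t^*_\alpha$ must exceed $1-\alpha$ for the selected units to have average coverage $1-\alpha$. In the statement's notation $t^*_\alpha\in[\alpha,1)$, which should be read with $\alpha$ small; I will only use that $\tau\ge t^*_\alpha$ on $A^*$ and $\tau<t^*_\alpha$ off $A^*$. Now take any feasible $A$. Since $\bbI_A-\bbI_{A^*}\le 0$ where $\tau\ge t^*_\alpha$ and $\ge 0$ where $\tau< t^*_\alpha$, we have pointwise
\[
(\tau(x)-t^*_\alpha)\,(\bbI_A-\bbI_{A^*})\le 0 .
\]
Taking expectations and writing $\tau-(1-\alpha) = (\tau-t^*_\alpha)+(t^*_\alpha-(1-\alpha))$ gives
\[
\E\{(\tau-(1-\alpha))\bbI_A\}-\E\{(\tau-(1-\alpha))\bbI_{A^*}\} \le (t^*_\alpha-(1-\alpha))\{\pr(A)-\pr(A^*)\}.
\]
The left-hand side is at least $0-0=0$ by feasibility of $A$ and equality for $A^*$. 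With $t^*_\alpha>1-\alpha$ this forces $\pr(A)\ge\pr(A^*)$, which points the wrong way, so the sign bookkeeping must be redone.

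The correct route is the standard Lagrangian/lfdr argument. Compare $\E\{(\tau-t^*_\alpha)\bbI_{A^*}\}\ge \E\{(\tau-t^*_\alpha)\bbI_A\}$, which holds because $A^*$ maximizes $\E\{(\tau-t)\bbI_A\}$ pointwise. Substitute $\E\{\tau\bbI_{A^*}\}=(1-\alpha)\pr(A^*)$ and $\E\{\tau\bbI_A\}\ge(1-\alpha)\pr(A)$. This yields $(1-\alpha-t^*_\alpha)\pr(A^*)\ge(1-\alpha-t^*_\alpha)\pr(A)$. Since $t^*_\alpha>1-\alpha$, the coefficient is negative, so $\pr(A^*)\ge\pr(A)$, which is the claimed power dominance.

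The main obstacle I expect is the existence of $t^*_\alpha$ attaining $\mfcr=\alpha$ exactly. When $\tau(X)$ has atoms, the map $t\mapsto\mfcr(\cS(T^\ora,t))$ is not continuous, and a randomized selection on the boundary $\{\tau=t^*_\alpha\}$ may be needed. I would handle this by assuming, as the statement implicitly does, that such a $t^*_\alpha$ exists, or by allowing randomized inclusion on ties; the pointwise inequality above remains valid for any $\bbI_{A^*}\in[0,1]$ on the boundary. A second check is to confirm that $\cC^\cI$ being random but i.i.d.\ (Assumption~\ref{ass:independentdata}) still lets $\tau$ be defined conditionally on $\cD^\tr,\cD^\ca$, so that all expectations above are conditional on those sets.
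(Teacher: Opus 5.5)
The step that fails is your claim that $t^*_\alpha$ must exceed $1-\alpha$; the inequality goes the other way. On $A^*$ you have $\tau\ge t^*_\alpha$, while $\mfcr=\alpha$ means $\E\{\tau\,\bbI_{A^*}\}=(1-\alpha)\pr(A^*)$, so the average of $\tau$ over $A^*$ equals $1-\alpha$. Hence $t^*_\alpha\le 1-\alpha$: the oracle admits units whose own coverage probability is below $1-\alpha$, and these are offset by units with high $\tau$. Both of your routes arrive at the same inequality,
\[
(1-\alpha-t^*_\alpha)\,\{\pr(A^*)-\pr(A)\}\ \ge\ 0 .
\]
In the second route you then divide by a coefficient you believe is negative without reversing the inequality. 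As written, the final step is invalid, and the right conclusion appears only because two sign errors cancel. Your first route, which you abandoned, was the correct computation; it only looked wrong because of the sign of $t^*_\alpha-(1-\alpha)$.

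The repair needs two cases. If $t^*_\alpha<1-\alpha$, the coefficient is positive and $\pr(A^*)\ge\pr(A)$ follows at once. If $t^*_\alpha=1-\alpha$, the display reads $0\ge 0$ and gives no information, so argue directly. The identity $\E\{(\tau-(1-\alpha))\bbI_{A^*}\}=0$ has a nonnegative integrand on $A^*=\{\tau\ge 1-\alpha\}$, which forces $\pr\{\tau>1-\alpha\}=0$. The feasibility condition $\E\{(\tau-(1-\alpha))\bbI_A\}\ge 0$ then has an a.s.\ nonpositive integrand, so $\tau=1-\alpha$ a.s.\ on $A$. Thus $A\subseteq A^*$ up to a null set, and $\pr(A)\le\pr(A^*)$.

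This is exactly the paper's Case I, where its threshold $1-t^*_\alpha$ on $S^*=1-T^\ora$ equals $\alpha$. Relatedly, the range $[\alpha,1)$ in the statement is really the range of $1-t^*_\alpha$, not of $t^*_\alpha$.

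With this fix your proof is a valid Neyman--Pearson argument. It differs from the paper's Case II only in the weight. You use the linear weight $\tau-t^*_\alpha$ and obtain dominance in $\pr(A)=\E|\cS|/m$, which is cPOW as defined in the main text. The paper uses the monotonicity of $(u-\alpha)/(1-u)$ and obtains dominance in the expected number of covering selections, $m\int\bbI_A(1-S^*)\,\rmd\bP_X$, which is what its proof calls cPOW. The paper's version is slightly stronger: combined with $\E\{\tau\bbI_{A^*}\}=(1-\alpha)\pr(A^*)$ and $\E\{\tau\bbI_A\}\ge(1-\alpha)\pr(A)$, it implies your conclusion, but not conversely.
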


In practice, $T^{\ora}$ must be estimated from data. For classification problems, 
\begin{equation*}
    T^{\ora}(x, \cC^\cI) = \sum_{k=1}^K \mathbb{I}(k \in \cC^\cI(x)) \cdot \pr(Y = k \mid X = x)
\end{equation*}
can be approximated by replacing $\pr$ with $\hpr$, an estimated probability function that 
can be obtained directly from off-the-shelf classifiers. This construction coincides with the trust scores derived in Section~\ref{subsec:instan-sc}. For the more complex size-$k_0$ constraint, this optimality-based strategy leads to an enhanced variant of \infospp, which we term \infosppp. Numerical studies in Section~\ref{ssec:classificationsimulated} demonstrate that \infosppp improves upon both \infosp and \infospp.

For regression settings, where estimating the conditional distribution of $(Y \mid X)$ is challenging, we instead solve a binary classification problem to approximate $T^\ora$. We present a split‑conformal implementation for clarity; a more complicated method based on positive‑unlabeled learning is given in Appendix~\ref{asec:classifieronpu}.

Suppose an independent training sample $\cD^\tr$ is available. For a given informative set constructor $\cC^\cI$, define a binary label for each $i \in \cD^\tr$ as $A_i = 1$ if $Y_i \in \cC^\cI(X_i)$ and $A_i = -1$ otherwise. We train a classifier by minimizing the empirical risk
\begin{equation}\label{eq:empiricalrisk}
    \hat{\mathcal{L}} (\cD^\tr; g, l) = 
    \frac{1}{|\cD^\tr|}
    \sum_{i\in\cD^\tr}
    \Big\{
        \lambda \, \mathbb{I}(A_i=1) \, \ell(1, g(X_i))
        +
        \mathbb{I}(A_i=-1) \, \ell(-1, g(X_i))
    \Big\},
\end{equation}
where $\ell(u,v) = - \mathbb{I}(u=1)\log(v) - \mathbb{I}(u=-1)\log(1-v)$ is the cross‑entropy loss, $\lambda>0$ balances the cost of misclassifications, and $g: \cX \to \mathbb{R}$ is a measurable function whose output is used for classification.

Let $\mathcal{L}(g,l) = \mathbb{E}[\hat{\mathcal{L}}(\mathcal{D}^{\text{tr}}; g,l)]$ and let $g^{*}$ be its minimizer over all measurable functions. Following Lemma~4.3 of \citet{marandon2024adaptive}, $g^{*}$ is a strictly increasing transformation of $T^{\text{ora}}$ under mild conditions. As threshold‑based selection is invariant under any strictly increasing transformation of the score, using a consistent estimate of $g^{*}$ as the trust score achieves asymptotically optimal power. Hence, we can further improve \cfbhp as follows. For the one‑sided alternative, set $A_i = 1$ if $Y_i > c_0$ and $A_i = -1$ otherwise. For the two‑sided alternative, set $A_i = 1$ if $\mathbb{I}\{Y_i \ge c_u,\ 2\hmu(X_i) \ge c_u + c_l\} + \mathbb{I}\{Y_i \le c_l,\ 2\hmu(X_i) < c_u + c_l\} = 1$, and $A_i = -1$ otherwise. We then minimize the empirical risk in~\eqref{eq:empiricalrisk} over a specified function class to obtain $\hat{g}$, which serves as the trust score for \cfbhp. We term this procedure \cfbhpp (as it employs the approximated oracle score).

\begin{remark}
The strategy of approximating the oracle score by training a binary classifier is motivated by \citet{marandon2024adaptive}. A key distinction, however, is that our informative selective conformal prediction framework has access to both null and non-null samples, whereas the novelty detection setting considered in \citet{marandon2024adaptive} only has null samples.
\end{remark}

\section{Numerical Experiments}\label{sec:numericexperiment}

This section evaluates the proposed methods through numerical experiments. Sections~\ref{ssec:regressionsimulated} and~\ref{ssec:classificationsimulated} examine regression and classification tasks, respectively, using simulated data. The performance on real data is illustrated on two real datasets: the CIFAR-10 example discussed in Section~\ref{sec:illustrativeexample} and Appendix~\ref{assec:detailillustrativeexample}, and a drug discovery application presented in Section~\ref{ssec:drugdiscovery}.

Under the InfoFCR framework (with general constraints), the only existing methods applicable are \infosp and its variant \infoscop \citep{bao2024selective, gazin2025selecting}. The latter first screens out units for which informative prediction sets are unlikely to be constructed. In many settings, \infoscop can improve upon \infosp when the screening step effectively narrows the focus. Nevertheless, no principled screening rule exists, nor are there theoretical guarantees for power improvement. Consequently, in simulation studies we adopt a screening strategy that tends to yield higher power (Section~\ref{ssec:regressionsimulated}), while in the real‑data analysis (Section~\ref{ssec:drugdiscovery}) we examine the performance of \infoscop under varying screening parameters.

We evaluate performance using empirical FCR, counting power (cPOW), and resolution‑adjusted power (rPOW); see the beginning of Section~\ref{sec:trust-score-cons} for definitions of cPOW and rPOW. For procedures that require an additional calibration set (\infoscop and \infospp), we split the original calibration data into two equal halves. Reproducible code is available at \texttt{github.com/wangchengLi6/SCIP}.

\subsection{Simulation in regression settings} \label{ssec:regressionsimulated}

Consider a regression task where the goal is to construct prediction intervals that only include positive values, i.e., $\cI = \{[a,b] \subseteq \mathbb{R} : 0 < a \le b < \infty\}$. In addition to \infosp, \infoscop, and \infospp, we also consider a naive baseline that builds standard conformal prediction intervals and then retains only the informative ones. The simulated data obey
\begin{equation*}
    Y_i = \mu^* (X_i) + \epsilon_i, \quad
    X \sim N(0,1), \quad 
    \epsilon_i \sim 
    N \big(0, 1/4 \big), \quad 
    \mu^* (x) = (2x^2 + 1)/6.
\end{equation*}
We use the non-conformity score $V(x,y) = |y - \hmu(x)|$, where $\hmu(x)$ is a predictive function trained on an independent dataset. To assess performance under model misspecification, we consider a family of predictive functions $\hmu(x;\eta) = \{(\eta+2)x^2 + (1-\eta^3)\}/6$, with $\eta \ge 0$ controlling the estimation bias ($\eta = 0$ corresponds to the error‑free case). The target FCR level is $\alpha = 0.1$, and the sample sizes are $n = m = 1000$. For the screening step of \infoscop, we apply \cfbh at level $\alpha' = \alpha/2 = 0.05$, testing the null $Y_{n+j} \le 0$ (since an informative set never covers a unit with $Y \le 0$). The simulation results for $\eta = 0$ and $\eta = 1$ are summarized in Figure~\ref{fig:comparenewparadigm}. The data points shown in the figure are from a single representative replication, while the reported FCR, cPOW, and rPOW are averaged over 1,000 independent repetitions.

\begin{figure}[!t]
	\includegraphics[width = \linewidth]{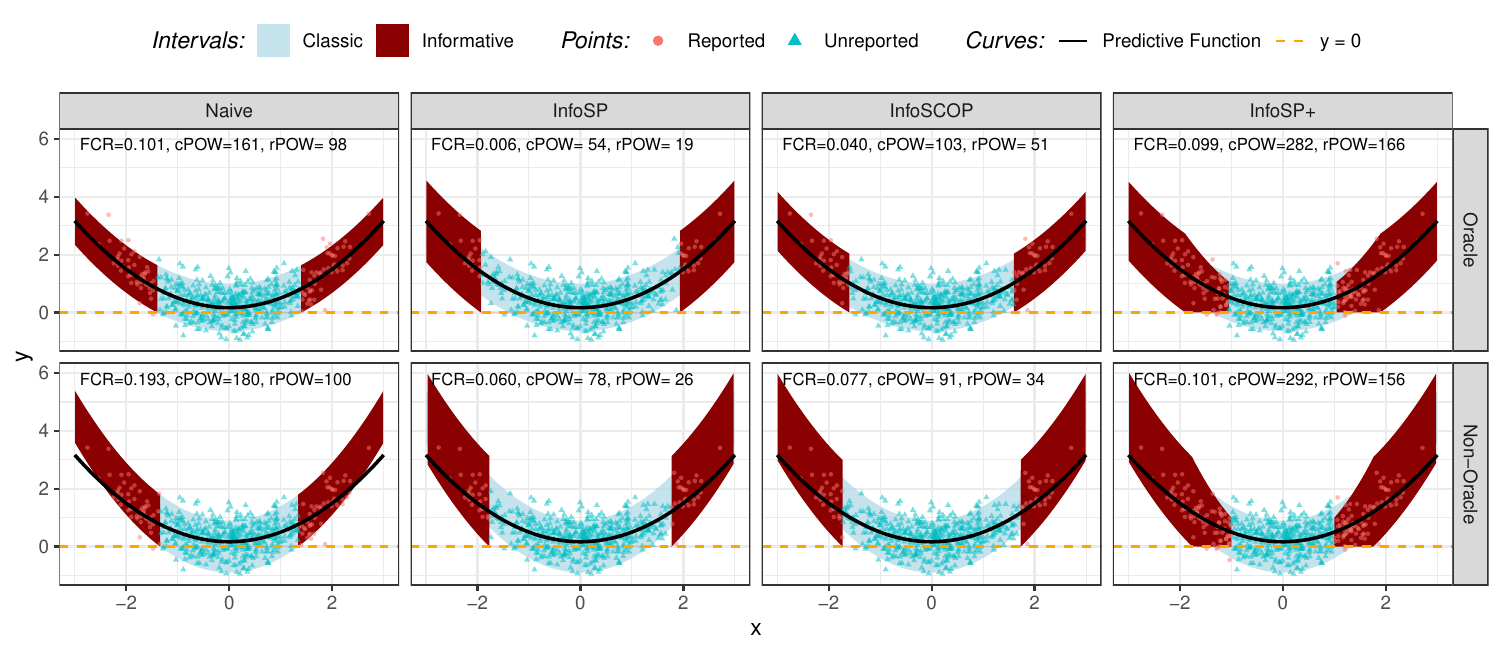}
	\centering
    \caption{
    \small
    The lightblue region represents the split conformal prediction intervals with $90\%$ coverage.
    The darkred region corresponds the selectively constructed intervals.
    The black curves correspond to the predictive functions $\hmu(x) = \mu^*(x)$ and $\hmu(x) = x^2/2$. }
    \label{fig:comparenewparadigm}
\end{figure}

We further compare the four methods under a range of $\eta$ values, which correspond to varying levels of estimation bias. The results are summarized in Figure~\ref{fig:compareparadigmacrosseta}. 

\begin{figure}[!t]
    \centering
    \includegraphics[width=\linewidth]{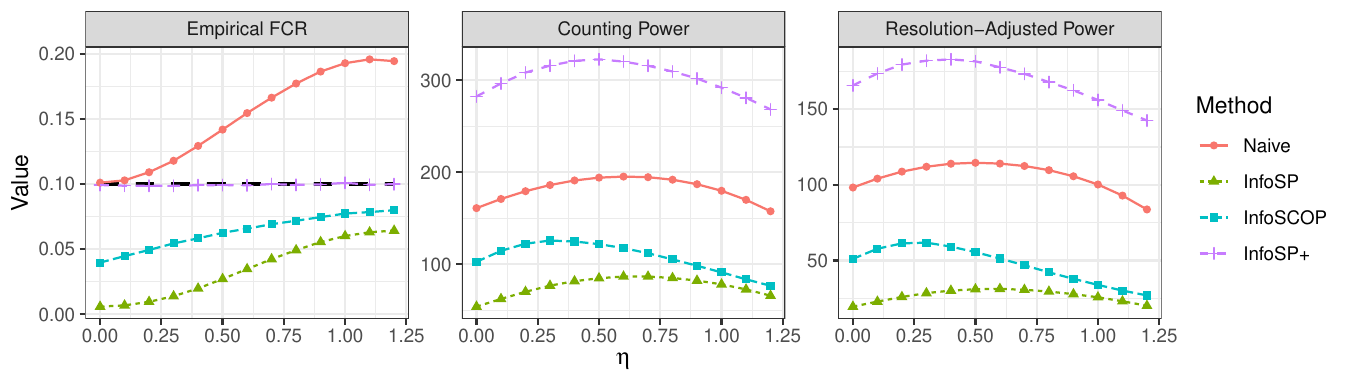}
    \caption{
    \small
    Comparison of the four methods across varies $\eta$. The reported FCR, cPOW, and rPOW are averaged over 1,000 independent repetitions. }
    \label{fig:compareparadigmacrosseta}
\end{figure}

Several important patterns can be observed. First, the naive method fails to control the FCR. In contrast, the other three methods—which inherit the model‑free advantage of conformal inference—effectively control the FCR below the nominal level regardless of the severity of model bias. Second, \infospp controls the FCR precisely at the nominal level and achieves the highest power, corroborating our theory of finite‑sample validity and anti‑conservativeness. Third, \infoscop improves upon \infosp, yielding a less conservative FCR and higher power. We emphasize, however, that this observation, based on a limited set of simulation settings, does not hold universally. The performance of \infoscop depends on the effectiveness of the screening rule, and no theoretical guarantee exists that \infoscop always dominates \infosp. This remains an interesting direction for future research.

\subsection{Simulation in classification scenario}\label{ssec:classificationsimulated}

We consider a classification problem with four categories and the informativeness constraint $\cI = \{ \cC \subseteq [4] \colon 0 < |\cC| \le 2 \}$ (see Section~\ref{subsec:instan-sc}). We compare \infospp with the naive method and \infosp. Since no effective screening criterion is available, \infoscop is excluded from the comparison. Instead, we include \infosppp, which constructs trust scores following the strategy outlined in Section~\ref{sec:trust-score-cons}. The data $\{(X_i,Y_i)\}_{i=1}^{n+m}$ are generated from the model:
\[
\pr(Y_i = k \mid X_i) = \frac{\exp\{X_i^\top \beta_k^*\}}{\sum_{k'=1}^4 \exp\{X_i^\top \beta_{k'}^*\}},
\]
where $X_i = (X_{i1}, X_{i2})^\top$ with independent standard normal components, and the coefficient vectors are
$\beta_1^* = (1,-1)^\top$, $\beta_2^* = (-1,1)^\top$, $\beta_3^* = (1,0.5)^\top$, $\beta_4^* = (0.5,1)^\top$.
We set $n = m = 1000$. The non‑conformity score is $V(x,y) = 1 - \hpr(Y = y \mid X = x)$, where the conditional probability is estimated from an independent training set of size $n$. We evaluate the naive method, \infosp, \infospp and \infosppp at varying FCR levels and report the results in Figure~\ref{fig:classificationvariesalp}.

\begin{figure}[!t]
    \centering
    \includegraphics[width=\linewidth]{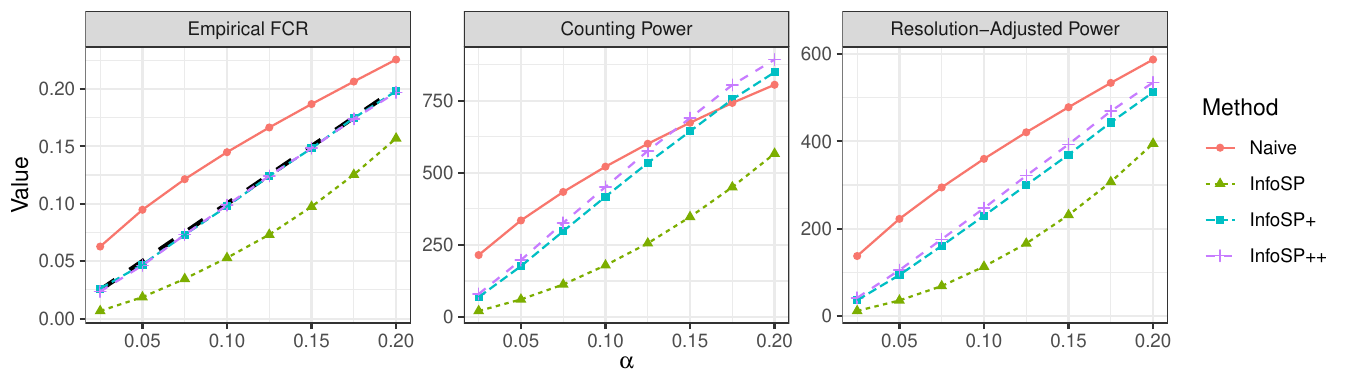}
    \caption{
    \small
    Comparison of the four methods across varies target FCR level $\alpha$. The black dashed line represents the function $y = x$. All the metrics are computed over 1,000 repetitions.}
    \label{fig:classificationvariesalp}
\end{figure}

The following patterns, similar to those observed in Section~\ref{ssec:regressionsimulated}, can be noticed. First, the naive method fails to control the FCR, whereas the other three methods remains valid, controlling the FCR below the target level. Second, \infosp is less conservative here than in the regression setting. Third, \infospp controls the FCR precisely at the nominal level and achieves higher power than \infosp. Finally, \infosppp, which uses the same informative set constructor, also controls the FCR exactly at the nominal level while further improving the power of \infospp through more effective trust scores.

\subsection{Real-world application: drug discovery}\label{ssec:drugdiscovery}

Machine learning is now widely integrated into drug discovery pipelines \citep{vamathevan2019applications}. A prominent example is virtual screening \citep{huang2007drug}, which can substantially reduce the experimental burden compared to exhaustively evaluating all candidates via high‑throughput screening (HTS). In a typical workflow, HTS is first performed on a subset of candidates to obtain labeled outcomes; a predictive model is then trained on these labeled data; finally, the model is used to screen promising candidates from the remaining unlabeled ones for follow‑up studies.

This section considers a drug–target interaction (DTI) prediction task, where the response variable $Y$ is the binding affinity of a drug–target pair and the feature object $X$ encodes the molecular structure. We use a public dataset from \citet{davis2011comprehensive}, which contains molecular structures and binding affinities for over 30,000 drug–target pairs. From this dataset, we extract a subset of $n_{\rm tot} = 10,000$ units for our analysis. Our goal is to construct prediction intervals for pairs predicted to have high binding affinity. Since downstream validation is costly, it is also essential to provide error rate guarantees.

This problem fits naturally into the InfoFCR framework. The threshold for meaningful discovery is set to the median of $Y$ in the extracted sample, $q_{0.5}=6.13$, leading to the informativeness constraint $\cI = \{[a,b] \subseteq \mathbb{R} : q_{0.5} \le a \le b < \infty\}$. We split the data into training, calibration, and test sets in a 2:2:6 ratio. The predictive model $\hat{\mu}$ is trained using the pipeline from the \texttt{DeepPurpose} repository \citep{huang2020deeppurpose}, and the non‑conformity score is $V(x,y)=|y-\hat{\mu}(x)|$. The target FCR level is $\alpha=0.2$, and all metrics are computed over 100 random data partitions.

We now describe the methods included in the comparison. It is natural to consider the naive method and \infospp (as a special case of \scip). The conformal selection method of \citet{jin2023selection} is excluded because it does not produce prediction intervals for the selected units. Furthermore, informative sets cannot be constructed for more than half of the test units; consequently, \infosp is not suitable due to its low power. Instead, we apply \infoscop by first filtering out unpromising units before constructing informative prediction intervals. A natural choice for the screening procedure is the \cfbh method, which tests the $m$ hypotheses $Y_{n+j} \le q_{0.5}$, $j\in[m]$. As no established rule exists for choosing the screening threshold, we apply \cfbh at three levels (0.2, 0.1, 0.05); the inclusion of multiple thresholds ensures a fair and comprehensive comparison. 

The results are summarized in Table~\ref{tab:drugcompareparadigm}. The naive method exhibits an inflated FCR. While all three variants of \infoscop control the FCR, their power remains low. Moreover, the performance of \infoscop is sensitive to the screening cutoff, and a principled strategy for determining this cutoff remains an open problem. In contrast, the proposed \infospp method controls the FCR at the nominal level, ensures that all reported intervals satisfy the informativeness constraint, and achieves the highest power among all methods considered.

\begin{table}[!htbp]
    % Source file path: /python/SCIP-Drug/outputs/df_drugdata_infosp_compare_paradigm_formorecases.csv
    \caption{
    Comparison of the three methods on the DTI data.}
    \centering
    \begin{tabular}{@{}lrrr@{}}
        \toprule
        Method          & FCR   & cPOW  & rPOW  \\ \midrule
        Naive           & 0.350 & 352.3 & 144.9 \\
        % \infosp   & 0 & 0   & 0   \\
        \infoscop-0.20  & 0.007 & 5.3   & 1.8   \\
        \infoscop-0.10  & 0.053 & 31.1  & 11.2  \\
        \infoscop-0.05  & 0.007 & 6.0   & 2.3   \\
        \infospp        & 0.206 & 215.0 & 69.9  \\ \bottomrule
    \end{tabular}
    \label{tab:drugcompareparadigm}
\end{table}

\singlespace
\bibliographystyle{abbrvnat}
\bibliography{reference}

@article{weinstein2017power,
  title = {A power and prediction analysis for knockoffs with lasso statistics},
  author = {Weinstein, Asaf and Barber, Rina and Candes, Emmanuel},
  journal = {arXiv preprint arXiv:1712.06465},
  year = {2017}
}

@article{angelopoulos2024theoretical,
  title = {Theoretical foundations of conformal prediction},
  author = {Angelopoulos, Anastasios N and Barber, Rina Foygel and Bates, Stephen},
  journal = {arXiv preprint arXiv:2411.11824},
  year = {2024}
}

@article{zhao2023controlling,
  title = {Controlling fsr in selective classification},
  author = {Zhao, Guanlan and Su, Zhonggen},
  journal = {arXiv preprint arXiv:2311.03811},
  year = {2023}
}

@article{wang2022elementary,
  title = {Elementary proofs of several results on false discovery rate},
  author = {Wang, Ruodu},
  journal = {arXiv preprint arXiv:2201.09350},
  year = {2022}
}

@article{gui2025acs,
  title = {ACS: An interactive framework for conformal selection},
  author = {Gui, Yu and Jin, Ying and Nair, Yash and Ren, Zhimei},
  journal = {arXiv preprint arXiv:2507.15825},
  year = {2025}
}

@article{nair2025diversifying,
  title = {Diversifying conformal selections},
  author = {Nair, Yash and Jin, Ying and Yang, James and Candes, Emmanuel},
  journal = {arXiv preprint arXiv:2506.16229},
  year = {2025}
}

@article{benjamini1995controlling,
  title = {Controlling the false discovery rate: a practical and powerful approach to multiple testing},
  author = {Benjamini, Yoav and Hochberg, Yosef},
  journal = {Journal of the Royal Statistical Society, Series B},
  year = {1995},
  volume = {57},
  number = {1},
  pages = {289--300},
  publisher = {Wiley Online Library}
}

@article{bates2023testing,
  title = {Testing for outliers with conformal p-values},
  author = {Bates, Stephen and Cand{\`e}s, Emmanuel and Lei, Lihua and Romano, Yaniv and Sesia, Matteo},
  journal = {The Annals of Statistics},
  year = {2023},
  volume = {51},
  number = {1},
  pages = {149--178},
  publisher = {Institute of Mathematical Statistics}
}

@article{marandon2024adaptive,
  title = {Adaptive novelty detection with false discovery rate guarantee},
  author = {Marandon, Ariane and Lei, Lihua and Mary, David and Roquain, Etienne},
  journal = {The Annals of Statistics},
  year = {2024},
  volume = {52},
  number = {1},
  pages = {157--183},
  publisher = {Institute of Mathematical Statistics}
}

@article{mary2022semisupervised,
  title = {Semi-supervised multiple testing},
  author = {Mary, David and Roquain, Etienne},
  journal = {Electronic Journal of Statistics},
  year = {2022},
  volume = {16},
  number = {2},
  pages = {4926--4981},
  publisher = {The Institute of Mathematical Statistics and the Bernoulli Society}
}

@article{benjamini2001control,
  title = {The Control of the False Discovery Rate in Multiple Testing under Dependency},
  author = {Benjamini, Yoav and Yekutieli, Daniel},
  journal = {The Annals of Statistics},
  year = {2001},
  volume = {29},
  number = {4},
  pages = {1165--1188}
}

@article{gazin2025selecting,
  title = {Selecting Informative Conformal Prediction Sets with False Coverage Rate Control},
  author = {Gazin, Ulysse and Heller, Ruth and Marandon, Ariane and Roquain, Etienne},
  journal = {Journal of the Royal Statistical Society, Series B},
  year = {2025},
  volume = {87},
  number = {4},
  pages = {909--929}
}

@article{jin2023modelfree,
  title = {Model-free selective inference under covariate shift via weighted conformal p-values},
  author = {Jin, Ying and Cand{\`e}s, Emmanuel J.},
  journal = {Biometrika},
  year = {2025},
  volume = {113},
  number = {1},
  pages = {asaf066},
  doi = {10.1093/biomet/asaf066}
}

@article{jin2023selection,
  title = {Selection by prediction with conformal p-values},
  author = {Jin, Ying and Cand{\`e}s, Emmanuel J},
  journal = {Journal of Machine Learning Research},
  year = {2023},
  volume = {24},
  number = {244},
  pages = {1--41}
}

@article{bao2024selective,
  title = {Selective conformal inference with false coverage-statement rate control},
  author = {Bao, Yajie and Huo, Yuyang and Ren, Haojie and Zou, Changliang},
  journal = {Biometrika},
  year = {2024},
  volume = {111},
  number = {3},
  pages = {727--742},
  publisher = {Oxford University Press}
}

@article{sun2007oracle,
  title = {Oracle and adaptive compound decision rules for false discovery rate control},
  author = {Sun, Wenguang and Cai, T Tony},
  journal = {Journal of the American Statistical Association},
  year = {2007},
  volume = {102},
  number = {479},
  pages = {901--912},
  publisher = {Taylor \& Francis}
}

@article{benjamini2005false,
  title = {False discovery rate--adjusted multiple confidence intervals for selected parameters},
  author = {Benjamini, Yoav and Yekutieli, Daniel},
  journal = {Journal of the American Statistical Association},
  year = {2005},
  volume = {100},
  number = {469},
  pages = {71--81},
  publisher = {Taylor \& Francis}
}

@article{jin2025confidence,
  title = {Confidence on the focal: Conformal prediction with selection-conditional coverage},
  author = {Jin, Ying and Ren, Zhimei},
  journal = {Journal of the Royal Statistical Society, Series B},
  year = {2025},
  volume = {87},
  number = {4},
  pages = {1239--1259},
  publisher = {Oxford University Press UK}
}

@article{lei2018distribution,
  title = {Distribution-free predictive inference for regression},
  author = {Lei, Jing and G'Sell, Max and Rinaldo, Alessandro and Tibshirani, Ryan J and Wasserman, Larry},
  journal = {Journal of the American Statistical Association},
  year = {2018},
  volume = {113},
  number = {523},
  pages = {1094--1111},
  publisher = {Taylor \& Francis}
}

@book{vovk2005algorithmic,
  title = {Algorithmic learning in a random world},
  author = {Vovk, Vladimir and Gammerman, Alexander and Shafer, Glenn},
  year = {2005},
  volume = {29},
  publisher = {Springer}
}

@article{guan2022prediction,
  title = {Prediction and outlier detection in classification problems},
  author = {Guan, Leying and Tibshirani, Robert},
  journal = {Journal of the Royal Statistical Society, Series B},
  year = {2022},
  volume = {84},
  number = {2},
  pages = {524--546},
  publisher = {Oxford University Press}
}

@book{boucheron2013concentration,
  title = {Concentration Inequalities: A Nonasymptotic Theory of Independence},
  author = {Boucheron, Stéphane and Lugosi, Gábor and Massart, Pascal},
  year = {2013},
  publisher = {Oxford University Press}
}

@article{gui2024conformal,
  title = {Conformal alignment: Knowing when to trust foundation models with guarantees},
  author = {Gui, Yu and Jin, Ying and Ren, Zhimei},
  journal = {Advances in Neural Information Processing Systems},
  year = {2024},
  volume = {37},
  pages = {73884--73919}
}

@article{wu2024optimal,
  title = {Optimal Subsampling via Predictive Inference},
  author = {Wu, Xiaoyang and Huo, Yuyang and Ren, Haojie and Zou, Changliang},
  journal = {Journal of the American Statistical Association},
  year = {2024},
  volume = {119},
  number = {548},
  pages = {2844--2856}
}

@article{shaffer2002multiplicity,
  title = {Multiplicity, directional (type III) errors, and the null hypothesis},
  author = {Shaffer, J. P.},
  journal = {Psychological Methods},
  year = {2002},
  volume = {7},
  number = {3},
  pages = {356--369}
}

@article{stephens2016false,
  title = {False discovery rates: a new deal},
  author = {Stephens, Matthew},
  journal = {Biostatistics},
  year = {2017},
  volume = {18},
  number = {2},
  pages = {275--294}
}

@article{liang2024integrative,
  title = {Integrative conformal p-values for out-of-distribution testing with labelled outliers},
  author = {Liang, Ziyi and Sesia, Matteo and Sun, Wenguang},
  journal = {Journal of the Royal Statistical Society, Series B},
  year = {2024},
  volume = {86},
  number = {3},
  pages = {671--693}
}

@article{barber2019aknockoff,
  title = {A knockoff filter for high-dimensional selective inference},
  author = {Barber, Rina Foygel and Cand{\`e}s, Emmanuel},
  journal = {The Annals of Statistics},
  year = {2019},
  volume = {47},
  number = {5},
  pages = {2504 -- 2537}
}

@article{tukey1991thephilosophy,
  title = {The Philosophy of Multiple Comparisons},
  author = {Tukey, John W.},
  journal = {Statistical Science},
  year = {1991},
  volume = {6},
  number = {1},
  pages = {100--116},
  publisher = {Institute of Mathematical Statistics}
}

@article{sun2012multiple,
  title = {Multiple Testing of Composite Null Hypotheses in Heteroscedastic Models},
  author = {Sun, Wenguang and McLain, Alexander C.},
  journal = {Journal of the American Statistical Association},
  year = {2012},
  volume = {107},
  number = {498},
  pages = {673--687}
}

@article{gang2025large,
  title = {Large-scale multiple testing of composite null hypotheses under heteroskedasticity},
  author = {Gang, Bowen and Banerjee, Trambak},
  journal = {Biometrika},
  year = {2025},
  volume = {112},
  number = {2},
  pages = {asaf007},
  publisher = {Oxford University Press}
}

@article{olsson2022estimating,
  title = {Estimating diagnostic uncertainty in artificial intelligence assisted pathology using conformal prediction},
  author = {Olsson, Henrik and Kartasalo, Kimmo and Mulliqi, Nita and Capuccini, Marco and Ruusuvuori, Pekka and Samaratunga, Hemamali and Delahunt, Brett and Lindskog, Cecilia and Janssen, Emiel AM and Blilie, Anders and others},
  journal = {Nature communications},
  year = {2022},
  volume = {13},
  number = {1},
  pages = {7761}
}

@article{shehu2016adaptive,
  title = {An adaptive personnel selection model for recruitment using domain-driven data mining},
  author = {Shehu, Muhammad Ahmad and Saeed, Faisal},
  journal = {Journal of Theoretical and Applied Information Technology},
  year = {2016},
  volume = {91},
  number = {1},
  pages = {117--130}
}

@article{laghuvarapu2023codrug,
  title = {Codrug: Conformal drug property prediction with density estimation under covariate shift},
  author = {Laghuvarapu, Siddhartha and Lin, Zhen and Sun, Jimeng},
  journal = {Advances in Neural Information Processing Systems},
  year = {2023},
  volume = {36},
  pages = {37728--37747}
}

@article{ahmed2016survey,
  title = {A survey of anomaly detection techniques in financial domain},
  author = {Ahmed, Mohiuddin and Mahmood, Abdun Naser and Islam, Md Rafiqul},
  journal = {Future Generation Computer Systems},
  year = {2016},
  volume = {55},
  pages = {278--288},
  publisher = {Elsevier}
}

@article{weinstein2013selection,
  title = {Selection adjusted confidence intervals with more power to determine the sign},
  author = {Weinstein, Asaf and Fithian, William and Benjamini, Yoav},
  journal = {Journal of the American Statistical Association},
  year = {2013},
  volume = {108},
  number = {501},
  pages = {165--176}
}

@article{efron2001empirical,
  title = {Empirical Bayes Analysis of a Microarray Experiment},
  author = {Efron, Bradley and Tibshirani, Robert and Storey, John D and Tusher, Virginia},
  journal = {Journal of the American Statistical Association},
  year = {2001},
  volume = {96},
  number = {456},
  pages = {1151--1160}
}

@article{weinstein2020selective,
  title = {Selective sign-determining multiple confidence intervals with FCR control},
  author = {Weinstein, Asaf and Yekutieli, Daniel},
  journal = {Statistica Sinica},
  year = {2020},
  volume = {30},
  number = {1},
  pages = {531--555},
  publisher = {JSTOR}
}

@inproceedings{weinstein2020online,
  title = {Online control of the false coverage rate and false sign rate},
  author = {Weinstein, Asaf and Ramdas, Aaditya},
  year = {2020},
  pages = {10193--10202},
  booktitle = {International Conference on Machine Learning},
  organization = {PMLR}
}

@article{rava2021burden,
  title = {A Burden Shared is a Burden Halved: A Fairness-Adjusted Approach to Classification},
  author = {Rava, Bradley and Sun, Wenguang and James, Gareth M. and Tong, Xin},
  journal = {Journal of the American Statistical Association},
  year = {2026},
  volume = {0},
  number = {ja},
  pages = {1--24}
}

@article{davis2011comprehensive,
  title = {Comprehensive analysis of kinase inhibitor selectivity},
  author = {Davis, Mindy I and Hunt, Jeremy P and Herrgard, Sanna and Ciceri, Pietro and Wodicka, Lisa M and Pallares, Gabriel and Hocker, Michael and Treiber, Daniel K and Zarrinkar, Patrick P},
  journal = {Nature biotechnology},
  year = {2011},
  volume = {29},
  number = {11},
  pages = {1046--1051},
  publisher = {Nature Publishing Group US New York}
}

@article{huang2020deeppurpose,
  title = {DeepPurpose: a deep learning library for drug--target interaction prediction},
  author = {Huang, Kexin and Fu, Tianfan and Glass, Lucas M and Zitnik, Marinka and Xiao, Cao and Sun, Jimeng},
  journal = {Bioinformatics},
  year = {2020},
  volume = {36},
  number = {22--23},
  pages = {5545--5547}
}

@article{heller2020optimal,
  title = {Optimal Control of False Discovery Criteria in the Two-Group Model},
  author = {Heller, Ruth and Rosset, Saharon},
  journal = {Journal of the Royal Statistical Society, Series B},
  year = {2021},
  volume = {83},
  number = {1},
  pages = {133--155}
}

@article{vamathevan2019applications,
  title = {Applications of machine learning in drug discovery and development},
  author = {Vamathevan, Jessica and Clark, Dominic and Czodrowski, Paul and Dunham, Ian and Ferran, Edgardo and Lee, George and others},
  journal = {Nature reviews Drug discovery},
  year = {2019},
  volume = {18},
  number = {6},
  pages = {463--477}
}

@book{huang2007drug,
  title = {Drug discovery research: new frontiers in the post-genomic era},
  author = {Huang, Ziwei},
  year = {2007},
  publisher = {John Wiley \& Sons}
}

@article{blanchard2009adaptive,
  title = {Adaptive False Discovery Rate Control under Independence and Dependence},
  author = {Blanchard, Gilles and Roquain, \'{E}tienne},
  journal = {Journal of Machine Learning Research},
  year = {2009},
  volume = {10},
  pages = {2837–2871}
}

@misc{levitskaya2023how,
    author       = {Viktoria Levitskaya},              
    title        = {How to boost business decisions with conformal prediction and confidence},     
    howpublished = {\url{https://redfield.ai/conformal-prediction-for-business/}}, 
    note         = {Accessed: 2023-10-01},     
    year         = {2023}                      
}

\doublespace

\newpage
\renewcommand{\thesection}{S.\arabic{section}}
\renewcommand{\theequation}{S.\arabic{equation}}
\renewcommand{\thefigure}{S.\arabic{figure}}
\renewcommand{\thetable}{S.\arabic{table}}
\renewcommand{\thealgorithm}{S.\arabic{algorithm}}
\renewcommand{\thetheorem}{S.\arabic{theorem}}
\renewcommand{\thedefinition}{S.\arabic{definition}}
\renewcommand{\theexample}{S.\arabic{example}}
\renewcommand{\thelemma}{S.\arabic{lemma}}
\renewcommand{\theassumption}{S.\arabic{assumption}}
\renewcommand{\theremark}{S.\arabic{remark}}

\setcounter{section}{0}
\setcounter{equation}{0}
\setcounter{table}{0}
\setcounter{figure}{0}

\setcounter{algorithm}{0}

\setcounter{theorem}{0}
\setcounter{definition}{0}
\setcounter{example}{0}
\setcounter{lemma}{0}
\setcounter{assumption}{0}
\setcounter{remark}{0}

\begin{center}\bf\large
	Online Supplementary Material for \\ ``Conformalized Large-Scale Selective Inference with Informative and Trustworthy Prediction Sets''
\end{center}

The supplement contains a discussion of related work (Section~\ref{asec:literaturereview}), additional methodological details (Section~\ref{asec:classifieronpu}), proofs of the equivalence of selective inference methods (Section~\ref{asec:equivproof}), proofs of the main theoretical results (Section~\ref{asec:technicalproofs}), and supplementary numerical results (Section~\ref{asec:additionalnumericalresults}).

% \begin{appendices}
\appendix

\section{Related Work}\label{asec:literaturereview}

In the context of informative selective inference, the efficacy of a method can be assessed along four key dimensions: 
(a) finite-sample validity, which guarantees that selectively reported prediction sets are statistically trustworthy; 
(b) anti-conservativeness, which ensures the FCR budget is asymptotically fully utilized; 
(c) informativeness, which guarantees that all reported prediction sets are scientifically meaningful; and 
(d) score adaptivity, which enables full exploitation of the available data. 
Table~\ref{tab:comparisonwithexistingwork} provides a comparative summary of existing methods and our proposed \scip framework along these dimensions.

\begin{table}[htb]
    \caption{
        Comparisons between \scip and various related methods based on key properties.
        \byfcr: procedure proposed by \citet{benjamini2005false};
        \texttt{Naive}: selectively reporting the informative prediction sets without adjustment;
        \texttt{SCOP}: adjusting the selected prediction sets using \texttt{SCOP} proposed by \citet{bao2024selective};
        \texttt{JOMI}: adjusting the selected prediction sets using \texttt{JOMI} proposed by \citet{jin2025confidence};
        \infosp, \infoscop: procedures for informative prediction proposed by \citet{gazin2025selecting};
        \cfbh: procedure for conformal selection proposed by \citet{jin2023selection};
        \texttt{Zhao-Su}: procedure for selective classification proposed by \citet{zhao2023controlling}.
        }
    \centering
    \small
    \renewcommand{\arraystretch}{1.2}

\begin{tabular}{|l|c|c|c|c|c|}
\hline

\multirow{2}{*}{Method} & 
\multirow{2}{*}{\makecell{\small Finite-Sample \\ Validity}} & 
\multirow{2}{*}{\makecell{\small Anti- \\ Conservativeness}} & 
\multicolumn{2}{c|}{\small Incorporating Informativeness} & 
\multirow{2}{*}{\makecell{\small Score \\ Adaptivity}} \\

\cline{4-5}

 & & & Regression & Classification & \\ 
\hline

\byfcr          & \yesgreen & \nonopink & \nonopink & \nonopink & \nonopink \\
\hline
\texttt{Naive}  & \nonopink & \nonopink & \yesgreen & \yesgreen & \nonopink \\
\hline
\texttt{SCOP}   & \yesgreen & \yesgreen & \nonopink & \nonopink & \nonopink \\
\hline
\texttt{JOMI}   & \yesgreen & \yesgreen & \nonopink & \nonopink & \nonopink \\
\hline
\infosp         & \yesgreen & \nonopink & \yesgreen & \yesgreen & \nonopink \\
\hline
\infoscop       & \yesgreen & \nonopink & \yesgreen & \yesgreen & \nonopink \\
\hline
\cfbh           & \yesgreen & \nonopink & \partyelw & \nonopink & \yesgreen \\
\hline
{\tt Zhao-Su}   & \yesgreen & \yesgreen & \nonopink & \partyelw & \yesgreen \\
\hline
{\color{red} $\bigstar$ \scip}          & \yesgreen & \yesgreen & \yesgreen & \yesgreen & \yesgreen \\
\hline
\end{tabular}
    \label{tab:comparisonwithexistingwork}
\end{table}

The remainder of this section is organized as follows. Section~\ref{assec:overview} presents a literature review of related work. Section~\ref{assec:anotherlookinfosp} provides a detailed comparison between \scip and \infosp, highlighting their differences and clarifying our contributions.

\subsection{Literature review}\label{assec:overview}

First, the seminal work of \cite{benjamini2005false} introduced the concept of FCR and highlighted a potential contradiction in the selection-adjustment paradigm: 
parameters initially deemed ``interesting'' during selection may lose their significance after post-selection adjustment, as their confidence intervals may include ``uninteresting'' values.
They addressed this issue by applying the \bhfdr procedure at selection stage and the \byfcr procedure in adjusting the selected confidence intervals.
This combination ensures both FCR control and consistency throughout the decision process.
However, \byfcr is shown to produce conservative empirical FCR levels (cf. \citealp{weinstein2013selection}) and does not explicitly account for informativeness constraints. 

Second, informativeness notion has been explored in classical selective inference. For example, multiple testing with composite nulls \citep{sun2012multiple, gang2025large} involves pre-specified indifference regions, where parameters within certain ranges are considered scientifically uninteresting, closely aligning with the concept of informativeness. Multiple testing with confident directions \citep{tukey1991thephilosophy, stephens2016false}, knockoff inference with directional FDR control \citep{barber2019aknockoff}, and sign-determining and localized selective confidence intervals \citep{weinstein2013selection, weinstein2020selective, weinstein2020online} all reflect the notion of informativeness, which explicitly requires reporting only non-null effects that are confidently positive or negative. However, these approaches are not directly applicable to modern conformal settings.

Third, our work advances the field of selective conformal prediction, which constructs prediction sets for selected test units.
The {\tt SCOP} method \citep{bao2024selective} introduced a post-selection calibration strategy to mitigate the conservativeness of \byfcr, but it limits to threshold-based selection rules and cannot accommodate general informativeness constraints.
The {\tt JOMI} method \citep{jin2025confidence} overcomes this limitation by building the post-selection calibration set via swapping technique.
Despite its elegant framework, it still faces the challenge that adjusted prediction sets may violate informativeness constraints (see the illustrative example in Section~\ref{sec:illustrativeexample}).
The \infosp method \citep{gazin2025selecting} resolves this issue within the \byfcr framework. 
Nevertheless, it suffers from the inefficiency in constructing the informative prediction sets and the conservativeness in controlling the FCR, yielding lower statistical power in practice.
In contrast, \scip offers a novel selection paradigm, directly excluding prediction sets that are either uninformative or prone to under-coverage.
It ensures both informativeness and trustworthiness while exhibiting higher power relative to \infosp.

Finally, the formulation of FCR control with informativeness constraints provides a unifying framework (cf. Section~\ref{sec:infofcrframework}) that encompasses various selective inference tasks, including outlier detection \citep{bates2023testing, marandon2024adaptive, liang2024integrative}, selective classification \citep{rava2021burden, zhao2023controlling}, and conformal selection \citep{jin2023selection, jin2023modelfree, gui2024conformal, gui2025acs}.
The \scip framework thus offers a comprehensive solution to these previously fragmented problems.
Particularly, while existing methods are often constrained by pre-specified intervals or categories,
the data-driven informative set constructor and trust-score mechanism offers flexibility to deal with general informativeness requirements and be applicable to both classification and regression tasks.
% \scip leverages data-driven informative set constructors and is applicable to both classification and regression tasks.
It recovers and frequently outperforms existing methods, as demonstrated through various examples in Section~\ref{sec:applicationofinfospp}.

\subsection{Detailed comparison with \infosp}
\label{assec:anotherlookinfosp}

We first briefly introduce the \infosp procedure and then compare it with our \scip (i.e., \infospp) procedure.

Let $\tq_{n+j} = \tq(X_{n+j}; \cD^\ca)$ denote the $\cI$-adjusted $p$-values for $j \in [m]$. 
\infosp uses these $p$-values to rank the test units for selection, with the threshold determined by the \bhfdr procedure:
\begin{equation*}%\label{eq:infosphatk1}
     \hk = \max \bigg\{
        k \in [m]:
        \sum_{j = 1}^m \bbI \big(\tq_{n+j} \le \alpha k / m \big) \ge k
    \bigg\}, 
    \quad \htau_\bh = \alpha \hk / m.
\end{equation*}
\infosp reports the set of intervals $\big\{\cC_{n+j}^{\cp, \htau_\bh}: \tq_{n+j} \le \htau_\bh\big\}$, which ensures both the informativeness constraint and FCR control simultaneously.

Both \infosp and \infospp are designed to handle general informativeness constraints, but \infospp improves upon \infosp in power. This power gain stems from two sources.

First, \scip fully utilizes the error budget (Theorem~\ref{the:fcrcontrolexact}), overcoming the conservativeness of \infosp. This advantage arises from the construction of the \emph{generalized conformal $p$-value}, which is built upon a \emph{trust score} that directly quantifies the likelihood of the non‑covering event $\{Y \notin \cC^\cI(X)\}$. By directly targeting the event that defines a false coverage, the trust score provides a more accurate and adaptive measure for error control, enabling the FCR budget to be nearly exhausted. In contrast, the $\cI$-adjusted $p$-values are derived by inverting the condition $\cC^{\mathrm{CP},q}(X) \in \cI$ and therefore lack a direct probabilistic link to the event of non‑coverage. Consequently, \infosp tends to be more conservative and leaves a substantial portion of the FCR budget unused. This conservativeness of the $\cI$-adjusted $p$-values is illustrated in Figure~\ref{fig:compareqvalpval_copy}.

\begin{figure}[!t]
    \centering
    \includegraphics[width=0.66\linewidth]{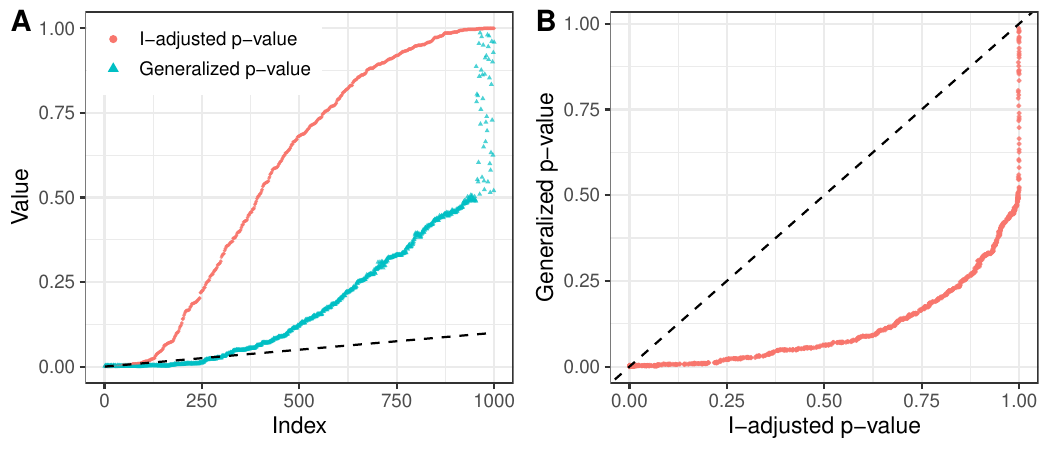}
    \caption{
    \small 
    Comparison of two $p$-values.
    The setting follows Figure~\ref{fig:comparenewparadigm} with $\eta = 1$.
    Black dashed lines in subplot-A and -B are $y = x \alpha/m$ and $y = x$, respectively. }
    \label{fig:compareqvalpval_copy}
\end{figure}

Second, \infosp assigns every reported prediction set the same marginal coverage level, $\alpha k / m$. In contrast, \infospp constructs prediction sets with test‑point‑specific marginal coverage levels by leveraging the truncated $\cI$-adjusted $p$-values. This adaptivity makes it possible to consider many candidate prediction sets that were previously excluded by \infosp. Notably, many of these newly considered candidates exhibit high reliability, allowing them to be selected without inflating the FCR. Consequently, a substantially larger number of informative prediction sets can be reported, which is another key source of the power gain.

\section{Extension: Classifier on Positive Unlabeled Data}\label{asec:classifieronpu}

To fully leverage the available data (or when an independent training set is unavailable), we discuss a method that trains the classifier on the mixed sample using a \emph{positive-unlabeled (PU) learning} approach. Specifically, units in $\{i \in \cD^\ca : Y_i \in \cC(X_i)\}$ are treated as the \emph{positive sample} and assigned the label $\bA_i = 1$, while the remaining units in the mixed sample are treated as the \emph{unlabeled sample} and assigned $\bA_i = -1$. We train the classifier via ERM with the objective function
\begin{equation*}
    \hcL_{\rm pu} (\cD^\mix; g, l)
    = 
    \frac{1}{n+m}
    \sum_{i = 1}^{n+m}
    \lambda \cdot \bbI(\bA_i = 1) \cdot \ell(1,g(X_i))
    + 
    \bbI(\bA_i = -1) \cdot \ell(-1,g(X_i)).
\end{equation*}
Let $g_{\rm pu}^*$ be the minimizer of the population objective function 
$\mathcal{L}_{\rm pu}(g,l) = \mathbb{E}[\hat{\mathcal{L}}_{\rm pu}(\mathcal{D}^{\text{mix}}; g, l)]$ over all measurable functions from $\mathcal{X}$ to $\mathbb{R}$. 
Following Lemma~4.3 of \citet{marandon2024adaptive}, $g_{\rm pu}^*$ is a strictly increasing transformation of $T^{\text{ora}}$ under mild conditions. In practice, we can minimize the empirical objective function to obtain $\hat{g}_{\rm pu}$ as the trust score. 

The permutation-invariance condition in Definition~\ref{def:rigorousper-inv} does not hold for $\hat{g}_{\rm pu}$ because the positive sample is extracted from the calibration data for training. Consequently, FCR control is not straightforward. The following proposition shows that the proposed algorithm maintains FCR control under a slightly stricter condition.

\begin{proposition}\label{prop:finitesamplefcrpulearning}
 Suppose that Assumption~\ref{ass:fullexchangeability} holds, the informative set constructor is permutation-invariant, and $\mathbb{I}\{Y_i \notin \cC(X_i)\}$, $i \in [n+m]$, are independent Bernoulli variables with probability $\pi_0^*$. Then Algorithm~\ref{alg:baseprocedure} with the PU-learned score $\hat{g}_{\rm pu}$ controls the FCR at level $\alpha$.
\end{proposition}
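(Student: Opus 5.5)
The plan is to adapt the leave-one-out argument behind Theorem~\ref{the:fcrcontrolfinitesample}, but to condition on the null indicators $\delta_i=\bbI\{Y_i\notin\cC^\cI_i\}$, $i\in[n+m]$. This conditioning is needed because $\hat g_{\rm pu}$ breaks permutation invariance. The PU score is trained on the labels $\bA_i$, and these labels equal $1$ exactly for calibration units with $\delta_i=0$. Hence $\hat g_{\rm pu}$ is a symmetric function of the unordered collection of all null units, namely the calibration nulls and all test units together, plus the non-null calibration units, which enter with known labels. Once we condition on the vector $(\delta_i)_{i\in[n+m]}$ and on the non-null calibration points, the null units $\{Z_i:\delta_i=1\}$, calibration and test alike, remain exchangeable. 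The independent Bernoulli$(\pi_0^*)$ assumption is what makes this conditional exchangeability legitimate. Together with exchangeability of $\cD^\mix$, it guarantees that conditioning on the pattern $(\delta_i)$ does not distort the joint law of the null points' features in a way that depends on whether they sit in the calibration or the test portion. Moreover, the trust score restricted to the null units is invariant under permutations among them.

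Step 1 is to write $\fcr=\sum_{j}\E\bigl[\delta_{n+j}\bbI(j\in\cS)/(1\vee|\cS|)\bigr]$. Using the self-consistency of \eqref{eq:qbh} and the standard decomposition $\bbI(j\in\cS)/|\cS|=\sum_k \bbI(p_j^\ps\le \alpha k/m,\,|\cS|=k)/k$, each term reduces to controlling $\E\bigl[\delta_{n+j}\bbI(p^\ps_j\le \alpha k/m)\,\bbI(|\cS|=k)/k\bigr]$. On $\{\delta_{n+j}=1\}$ we have $p_j^\ps=p_j^\oc$, and, conditionally on the $\delta$'s, $p_j^\oc$ is the rank of $T_{n+j}$ among the null trust scores, namely the $n_0$ calibration nulls plus test unit $j$. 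Step 2 is the conditional super-uniformity statement $\pr(p_j^\oc\le t\mid \delta,\ \text{non-null calibration data})\le t\cdot (n_0+1)/(n+1)\le t$. This follows from the exchangeability of $T_{n+j}$ with the calibration null scores, since $\hat g_{\rm pu}$ treats all of them symmetrically as ``unlabeled/null'' except for the calibration-vs-test distinction. That distinction does not enter, because calibration nulls and test units both receive $\bA=-1$.

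Step 3 establishes the PRDS property of $\bp^\ps_{j\to\oc}$ on $\{j\}$, conditionally on $\delta$ and on the remaining non-null data. I would mimic the proof of Proposition~\ref{the:prdspvalue}. Condition on the unordered multiset of null scores $\{T_i:\delta_i=1,\ i\neq n+j\}\cup\{T_{n+j}\}$ and on all non-null scores. Then each $p^\ps_\ell$, $\ell\neq j$, is a nonincreasing function of $p^\oc_j$: a larger rank of test unit $j$ means more calibration null scores are small, which lowers the other $p$-values. Combining Steps 2 and 3 with the standard PRDS lemma for BH gives $\E[\delta_{n+j}\bbI(j\in\cS)/(1\vee|\cS|)\mid\delta]\le \delta_{n+j}\alpha/m$. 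Summing over $j$ and taking expectations then yields $\fcr\le\alpha\E[\pi_0]\le\alpha$.

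The main obstacle is Step 2. We must verify rigorously that conditioning on the null/non-null pattern, which is itself random and data-dependent through the constructor, preserves exchangeability of the null units jointly with the learned $\hat g_{\rm pu}$. In particular, we must check that the positive calibration units do not break symmetry between calibration nulls and test nulls. My first attempt would be to write the joint density under independent Bernoulli indicators as a product over units of $\pi_0^*P_{X,Y\mid\delta=1}$ or $(1-\pi_0^*)P_{X,Y\mid\delta=0}$. With a fixed or training-dependent constructor, this factorization makes the null units conditionally i.i.d. I would then check invariance of $\hat g_{\rm pu}$ under swapping test unit $j$ with any calibration null.
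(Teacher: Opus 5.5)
\textbf{There is a genuine gap: the inequality in your Step~2 points the wrong way, and the rest of the argument inherits the error.}

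Fix the pattern $\delta$ with $\delta_{n+j}=1$. The numerator of $p_j^{\ps}$ counts only the $n_0$ calibration nulls, and $T_{n+j}$ is exchangeable with exactly those scores. So $(n+1)p_j^{\ps}$ is uniform on $(0,n_0+1)$, and
\[
\pr\bigl(p_j^{\ps}\le t\mid\delta\bigr)=\min\Bigl\{1,\;t\,\frac{n+1}{n_0+1}\Bigr\}\;\ge\;t .
\]
Given the pattern, null $p$-values are pushed \emph{toward} zero, not away from it. Your conditional bound $\E[\delta_{n+j}\bbI(j\in\cS)/(1\vee|\cS|)\mid\delta]\le\delta_{n+j}\alpha/m$ is therefore false. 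Take a pattern with $n_0=0$. Every test $p$-value equals $U_\ell/(n+1)$, so once $(n+1)\alpha\ge1$ BH selects all $m$ units, and each null test unit contributes $1/m$. Your final conclusion $\fcr\le\alpha\E[\pi_0]$ also cannot hold. If the ERM class is a single fixed, tie-free function, then $\hat g_{\rm pu}$ is permutation-invariant. Theorem~\ref{the:fcrcontrolexact}, under its conditions, then gives $\fcr\ge\alpha-O(me^{-n})$. For large $n$ this exceeds $\alpha\pi_0^*$ whenever $\pi_0^*<1$.

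\textbf{What is missing is the unconditioning step, which is where the independent-Bernoulli assumption actually enters.} It is not needed for exchangeability of the null units given the pattern. Permutations that fix every non-null index leave the event $\{\delta=d\}$ unchanged and merely reorder the PU-labelled sample, so Assumption~\ref{ass:fullexchangeability} already suffices there. The paper instead conditions on $H_{0,j}$ and on the number $s_L$ of covered calibration units. Lemma~\ref{lem:prdspvalue}(c) then shows that, on $\{s_L=s\}$, $p_j^{\ps}$ is uniform on $(0,(n-s+1)/(n+1))$ and $\bp^{\ps}$ is PRDS on $\{j\}$. This yields a conditional per-$j$ bound of $\frac{\alpha}{m}\cdot\frac{n+1}{n-s+1}$, which is an inflation, not a deflation. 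The inflation is paid back only on average. Using
\[
\pr(H_{0,j},s_L=s)=\binom{n}{s}(1-\pi_0^*)^s(\pi_0^*)^{n-s+1}
\quad\text{and}\quad
\frac{n+1}{n-s+1}\binom{n}{s}=\binom{n+1}{s},
\]
the sum over $s$ equals $\frac{\alpha}{m}\{1-(1-\pi_0^*)^{n+1}\}\le\frac{\alpha}{m}$. Summing over $j$ gives $\fcr\le\alpha$, not $\alpha\pi_0^*$.

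Your finer conditioning on the full pattern would work equally well once Step~2 is corrected to the factor $(n+1)/(n_0+1)$. It must then be followed by this average over $n_0\sim\mathrm{Bin}(n,\pi_0^*)$, which is independent of $\delta_{n+j}$.

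\textbf{Step~3 also has the monotonicity reversed.} With the multiset of null scores fixed, a larger $p_j^{\oc}$ places $T_{n+j}$ lower and leaves the larger values to the calibration nulls. The other $p$-values are therefore \emph{non-decreasing} in $p_j^{\oc}$, which is what PRDS requires. A non-increasing dependence, as you state it, would not give PRDS.
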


% \color{red}
\section{Equivalence Proofs}\label{asec:equivproof}

\subsection{The self-consistent reformulation of the \bhfdr procedure}\label{subsec:self-cons}

Let $p_1, \dots, p_m$ be a set of p-values. Consider the following two procedures. 
\begin{itemize}
    \item Procedure 1 (\bhfdr, \citealp{benjamini1995controlling}):
    \[
    \hk = \max \big\{ i : p_{(i)} \le \frac{i}{m} \alpha \big\}, 
    \quad
    \mathcal S_1 = \{ j \in [m] : p_j \le p_{(\hk)} \},
    \]
    where $p_{(1)} \le \dots \le p_{(m)}$ are the ordered p-values.

    \item Procedure 2 (self-consistent, \citealp{blanchard2009adaptive}):
    \[
    \widehat{\alpha} = \max \Big\{ a \in [0,1] : \frac{\alpha}{m} \sum_{j=1}^m \bbI(p_j \le a) \ge a \Big\},
    \quad
    \mathcal S_2 = \{ j \in [m] : p_j \le \widehat{\alpha} \}.
    \]
 ``Self-consistent'' means that the threshold and the number of rejections it produces are in agreement: $\widehat{\alpha}=(\alpha/m)r(\widehat{\alpha})$.       
\end{itemize}
We aim to show that $\mathcal S_1 = \mathcal S_2$. 
\begin{remark}
This result is well known and straightforward to establish, yet we could not locate a reference that rigorously documents the equivalence. We therefore include a proof here for completeness.
\end{remark}

\noindent\textbf{Proof of equivalence.} Let $p_{(1)} \le \dots \le p_{(m)}$ be the ordered p-values and define
$ r(a) = \sum_{j=1}^m \bbI(p_j \le a),$
the number of p-values that are less than or equal to $a$.

We first show that $ \widehat{\alpha} = \alpha\hk/m$.
Indeed, since $p_{(\hk)} \le \alpha \hk / m$,
we have $r (\alpha\hk /m ) \ge \hk,$
and therefore $
\alpha / m \times  r (\alpha \hk / m) \ge \alpha \hk / m$.
So $ a = \alpha \hk / m$ is feasible in the definition of $\widehat{\alpha}$, which implies $\widehat{\alpha} \ge \alpha \hk / m$.

Conversely, let $a \in [0,1]$ be any feasible value, and let $k=r(a)$. Then $p_{(k)} \le a$, and feasibility gives
$
a \le (\alpha/m)r(a)=(\alpha/m)k.
$
Hence $p_{(k)} \le \alpha k / m$.
so $k$ satisfies the \bhfdr condition. By maximality of $\hk$, we must have $k \le \hk$. Therefore,
$
a \le (\alpha/m)k \le \alpha \hk / m.
$
Since this holds for every feasible $a$, it follows that
$
\widehat{\alpha} \le \alpha \hk / m.
$
Combining the two inequalities yields $\widehat{\alpha} = \alpha \hk / m$.

It remains to compare the rejection sets. Since $p_{(\hk)} \le \widehat{\alpha}$,
we have $\mathcal S_1 \subseteq \mathcal S_2$. On the other hand, $|\mathcal S_2| = r(\widehat{\alpha}) \le \hk$ 
by the argument above applied to the feasible value $a=\widehat{\alpha}$. Also, $|\mathcal S_1| = \hk$.
Hence $|\mathcal S_2| \le |\mathcal S_1|$, and therefore $\mathcal S_1=\mathcal S_2$. \qed

\subsection{Equivalence between \scip and \cfbh in one-sided conformal selection} 
\label{assec:anotherlookcfbh}

Consider conformal selection with a one‑sided null hypothesis $H_{0,j}: Y_{n+j} \le 0$ \citep{jin2023selection}. 
Let $V^{\downarrow}(x,y)$ be a monotone score such that $V^{\downarrow}(x,y) \le V^{\downarrow}(x,y')$ whenever $y > y'$. The generalized conformal $p$-values are defined as
\begin{equation}\label{eq:conformalpvaluejc}
    p_j^{\text{jc}} = \frac{
        \sum_{i=1}^n \mathbb{I}\bigl(V^{\downarrow}_i > V^{\downarrow}(X_{n+j},0)\bigr) 
        + \bigl\{ 1 + \sum_{i=1}^n \mathbb{I}\bigl(V^{\downarrow}_i = V^{\downarrow}(X_{n+j},0)\bigr) \bigr\} U_j
    }{n+1},
\end{equation}
for $j \in [m]$, where $U_j \sim \text{Unif}(0,1)$ are independent tie‑breaking variables and $V^{\downarrow}_i = V^{\downarrow}(X_i, Y_i)$ for $i \in [n]$.
It can be verified that these $p$-values satisfy
\begin{equation}\label{eq:conditionalconservative}
    \begin{aligned}
        \Pr\bigl(p_j^{\text{jc}} \le \alpha,\; Y_{n+j} \le 0\bigr) 
        & \stackrel{(i)}{\le} 
        \Pr\bigl(p_j^{\text{CP}}(Y_{n+j};V^{\downarrow}) \le \alpha,\; Y_{n+j} \le 0\bigr) \\
        & \stackrel{(ii)}{\le} 
        \Pr\bigl(p_j^{\text{CP}}(Y_{n+j};V^{\downarrow}) \le \alpha\bigr) 
        \stackrel{(iii)}{=} \alpha + O(1/n),
    \end{aligned}
\end{equation}
where step (i) follows from monotonicity, and step (iii) is a standard result of split conformal prediction \citep{angelopoulos2024theoretical}. The \cfbh procedure, which applies \bhfdr to the generalized $p$-values, is summarized in Algorithm~\ref{alg:cfbh}.

\citet{jin2023selection} showed that equipping \cfbh with the clipped score, $V_{\rm clp}(x,y) = \hmu(x)- c_0 - 2 M \times \bbI(y > c_0)$ (with $M \ge \sup_x |\hmu(x)|$ sufficiently large and $\hmu$ being learned independently),
makes \cfbh asymptotically anti-conservative.

The conformal $p$-value in~\eqref{eq:conformalpvaluejc} with the clipped score becomes
\begin{equation*}
    \begin{aligned}
        p_j^{\text{jc}} 
        &= \frac{
            \sum_{i=1}^n \mathbb{I}\{V(X_i,Y_i) > V(X_{n+j},0)\} 
            + \bigl\{ 1 + \mathbb{I}\{V(X_i,Y_i) = V(X_{n+j},0)\} \bigr\} U_j
        }{n+1} \\[4pt]
        &\stackrel{(i)}{=} 
        \frac{
            \sum_{i=1}^n \mathbb{I}(Y_i \le 0) \, \mathbb{I}\{\hat{\mu}(X_i) > \hat{\mu}(X_{n+j})\} 
            + \bigl\{ 1 + \mathbb{I}(Y_i \le 0) \, \mathbb{I}\{\hat{\mu}(X_i) = \hat{\mu}(X_{n+j})\} \bigr\} U_j
        }{n+1},
    \end{aligned}
\end{equation*}
where step $(i)$ follows because $V(X_i,Y_i) < V(X_{n+j},0)$ whenever $Y_i > 0$, provided $M \ge \sup_x |\hat{\mu}(x)|$. 

Recall the generalized conformal $p$-value constructed from the trust score $T(x,\mathcal{C}^{\mathcal{I}}(x)) = \hat{\mu}(x)$, as defined in \eqref{eq:pseudopvalue} of the main text:
\[
p_j^{\ps} = 
    \frac{
        \sum_{i = 1}^n 
        \mathbb{I}\big( Y_i \notin \mathcal{C}_i^{\mathcal{I}},\; T_i > T_{n+j} \big)
        + 
        \big\{ 
            1 + 
            \sum_{i = 1}^n 
            \mathbb{I}\bigl(Y_i \notin \mathcal{C}_i^{\mathcal{I}},\; T_i = T_{n+j}\bigr)
        \bigr\}
        \cdot U_j
    }{1+n}.
\]
One can verify that $p_j^{\text{jc}}$ and $p_j^{\ps}$ coincide exactly, which establishes the equivalence. Notably, our construction follows a principled, generic strategy that uses a trust score to directly capture the non‑covering event, in contrast to the less straightforward, clever but non‑standard derivation in \citet{jin2023selection}.

\begin{algorithm}[!ht]
	%\textsl{}\setstretch{1.2}
	\renewcommand{\algorithmicrequire}{\textbf{Input:}}
	\renewcommand{\algorithmicensure}{\textbf{Output:}}
	\caption{\cfbh proposed by \citep{jin2023selection}.}
	\label{alg:cfbh}
	\begin{algorithmic}[1]
        \REQUIRE 
        Calibration data $\cD^\ca = \{(X_i,Y_i), i \in [n]\}$;
        Test data $\cD^\te = \{X_{n+j}, j \in [m]\}$;
        Monotone non-conformity score $V^{\downarrow} (x,y)$; 
        Target FDR level $\alpha$.
        \STATE
        Compute $V_i^{\downarrow} = V^{\downarrow}(X_i,Y_i)$ for $i \in [n]$ and $V_{n+j}^{\downarrow} = V^{\downarrow}(X_{n+j},0)$ for $j \in [m]$;
        \STATE 
        Calculate $p$-values $p_j^\jc$ as in \eqref{eq:conformalpvaluejc} for $j \in [m]$;
        \STATE 
        Calculate $\hat{\alpha}$ by applying $\alpha$-level \bhfdr procedure on $\{p_j^\jc, j \in [m]\}$ as in \eqref{eq:qbh};
        \ENSURE
        The selection set
        $\cS = \{j\colon  p_j^\jc \le \hat{\alpha}\}$.
	\end{algorithmic}  
\end{algorithm}

\subsection{Equivalence between \scip and procedures in selective classification}\label{subsec:equi-sc}

In this subsection, we first connect the selection set produced by the \bhfdr procedure to that produced by the counting knockoff procedure \citep{weinstein2017power}, and then present the equivalence between \scip and the two procedures in the context of selective classification \citep{rava2021burden,zhao2023controlling}.

\noindent\textbf{1. Connection between \bhfdr and the counting knockoff procedure}.

Fix an informative set constructor $\cC^\cI$ and a trust score $T$. Let $\cC_i^\cI = \cC^\cI(X_i)$ and $T_i = T(X_i, \cC^\cI_i)$ for $i \in [n+m]$. Consider the \emph{deterministic} version of~\eqref{eq:pseudopvalue}, obtained by setting $U_j \equiv 1$:
\begin{equation*}
    p_j^{\text{psdt}} = 
    \frac{
        \sum_{i=1}^n \mathbb{I}\bigl( Y_i \notin \cC_i^\cI,\; T_i \ge T_{n+j} \bigr) + 1
    }{n+1}.
\end{equation*}
Applying the \bhfdr procedure to $\{p_j^{\text{psdt}}\}_{j=1}^m$ yields
\begin{equation*}
    \widehat{\alpha}^{\text{dt}} = \max \bigg\{ a \in [0,1] : \frac{\alpha}{m} \sum_{j=1}^m \mathbb{I}(p_j^{\text{psdt}} \le a) \ge a \bigg\},
\end{equation*}
and the selection set $\cS^{\text{dt}} = \{ j \in [m] : p_j^{\text{psdt}} \le \widehat{\alpha}^{\text{dt}} \}$, with reported prediction sets $\{\cC_{n+j}^\cI : j \in \cS^{\text{dt}}\}$. Since the deterministic generalized conformal $p$-values are pointwise larger than the original randomized ones, this reporting rule still controls the FCR.

Now consider the counting knockoff procedure \citep{weinstein2017power}:
\[
\cS^{\text{ck}} = \{ j \in [m] : T_{n+j} \ge \widehat{\tau}^{\text{ck}} \}, \mbox{ where}
\]
\begin{eqnarray}\label{eq:scipck}
    \widehat{\tau}^{\text{ck}} & = & \min \bigg\{ \tau \in \{T_{n+j}\}_{j=1}^m : \widehat{\text{FDP}}(\tau) \le \alpha \bigg\}; \\
    \widehat{\text{FDP}}(\tau) & = & \frac{1 + \sum_{i=1}^n \mathbb{I}\{Y_i \notin \cC_i^\cI,\; T_i \ge \tau\}}{1 \vee \sum_{j=1}^m \mathbb{I}\{T_{n+j} \ge \tau\}} \cdot \frac{m}{n+1}.
\end{eqnarray}
The next lemma, adapted from Lemma 2.2 of \citet{mary2022semisupervised}, establishes the equivalence of the two selection sets.

\begin{lemma}
    Fix the informative set constructor $\cC^\cI$ and the trust score $T$. Then the selection sets output by the \bhfdr procedure with deterministic $p$-values and by the counting knockoff procedure coincide: $\cS^{\text{dt}} = \cS^{\text{ck}}$.
\end{lemma}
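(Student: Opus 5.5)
The proof reduces both procedures to a comparison on the common grid of candidate thresholds given by the test trust scores. Introduce the counting functions
\[
F(\tau)=\sum_{i=1}^n \bbI\{Y_i\notin\cC_i^\cI,\,T_i\ge\tau\},\qquad R(\tau)=\sum_{j=1}^m\bbI\{T_{n+j}\ge\tau\},
\]
so that $p_j^{\text{psdt}}=(1+F(T_{n+j}))/(n+1)$. Since $F$ is non-increasing in $\tau$, the deterministic $p$-values are non-increasing in the trust score. This gives a monotonicity fact: if $T_{n+k}\ge T_{n+j}$, then $p_k^{\text{psdt}}\le p_j^{\text{psdt}}$. Both $\cS^{\text{dt}}$ and $\cS^{\text{ck}}$ are therefore "upper sets" in the trust-score ordering, modulo ties. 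Ties need separate handling, because two units with different $T$ but equal $F$ share the same $p$-value.

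\textbf{Showing $\cS^{\text{ck}}\subseteq\cS^{\text{dt}}$.} Let $\tau=\widehat{\tau}^{\text{ck}}$, assuming the minimum set is nonempty (otherwise $\cS^{\text{ck}}=\emptyset$, which is trivially contained). Set $a=(1+F(\tau))/(n+1)$. For every $j$ with $T_{n+j}\ge\tau$, monotonicity gives $p_j^{\text{psdt}}\le a$, so $\sum_j\bbI(p_j^{\text{psdt}}\le a)\ge R(\tau)$. The condition $\widehat{\fdp}(\tau)\le\alpha$ rearranges to $a\le \alpha R(\tau)/m$, using $R(\tau)\ge1$ because $\tau$ is itself a test score. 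Hence $a$ is feasible in the self-consistent definition. It follows that $\widehat{\alpha}^{\text{dt}}\ge a$, and every $j$ with $T_{n+j}\ge\tau$ is selected by $\cS^{\text{dt}}$.

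\textbf{Showing $\cS^{\text{dt}}\subseteq\cS^{\text{ck}}$.} Assume $\cS^{\text{dt}}\neq\emptyset$. Let $\tau'=\min\{T_{n+j}:j\in\cS^{\text{dt}}\}$, attained at some $j_0$. The key step is to show that $\tau'$ satisfies the counting-knockoff constraint, i.e.
\[
(1+F(\tau'))\,m\le\alpha(n+1)R(\tau').
\]
Two facts combine to give this. First, $(1+F(\tau'))/(n+1)=p_{j_0}^{\text{psdt}}\le\widehat{\alpha}^{\text{dt}}$. Second, self-consistency (as in the equivalence proof of Section~\ref{subsec:self-cons}) gives $\widehat{\alpha}^{\text{dt}}=(\alpha/m)\,|\cS^{\text{dt}}|$. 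It remains to check that $|\cS^{\text{dt}}|\le R(\tau')$, which holds because every selected unit has $T_{n+j}\ge\tau'$ by the definition of $\tau'$. Chaining these, $(1+F(\tau'))/(n+1)\le(\alpha/m)R(\tau')$, which is the required inequality. By minimality, $\widehat{\tau}^{\text{ck}}\le\tau'$, so every $j\in\cS^{\text{dt}}$ satisfies $T_{n+j}\ge\tau'\ge\widehat{\tau}^{\text{ck}}$ and lies in $\cS^{\text{ck}}$.

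\textbf{Main obstacle.} The delicate point is the tie and plateau structure. A unit with $T_{n+j}<\tau'$ may have the same $p$-value as $j_0$; if that happened, it would be selected by $\cS^{\text{dt}}$, contradicting the definition of $\tau'$ as the minimum selected score. The argument above sidesteps this issue because $\tau'$ is defined directly from $\cS^{\text{dt}}$ rather than from a $p$-value threshold. The direction that genuinely needs care is the first inclusion, specifically the claim that $p_j^{\text{psdt}}\le a$ for all $T_{n+j}\ge\tau$. This uses that $F$ is evaluated with "$\ge$" in both definitions, and I will verify that these conventions match exactly.
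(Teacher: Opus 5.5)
Your proof is correct. It is also more self-contained than what the paper provides. The paper gives no argument of its own for this lemma: it states it as an adaptation of Lemma~2.2 of \citet{mary2022semisupervised}, and later reuses that citation, in the reformulation of the modified \infospp, to identify $\htau^+_{\rm ck}$ with $\htau^+$.

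You replace the citation with a direct two-inclusion argument that needs only two facts: $p_j^{\text{psdt}}$ is non-increasing in $T_{n+j}$, and $\widehat{\alpha}^{\text{dt}}$ is a feasible point of the self-consistent program. Because the argument never sorts the $p$-values, ties among test trust scores and plateaus of $F$ need no separate bookkeeping.

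Two small remarks. First, the second inclusion uses only $\widehat{\alpha}^{\text{dt}}\le(\alpha/m)|\cS^{\text{dt}}|$, which is just feasibility of the maximizer, not the exact self-consistency identity. Second, the obstacle you flag at the end is not one. Both $p_j^{\text{psdt}}$ and $\widehat{\text{FDP}}$ count calibration units with $Y_i\notin\cC_i^\cI$ and $T_i$ at least the threshold, so the conventions agree by inspection. Also $a\le\alpha R(\tau)/m<1$, so $a\in[0,1]$ holds automatically.

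Your route also covers a case the bare citation does not address explicitly: the shared-$U$ $p$-values used for the modified \infospp. There the argument goes through essentially unchanged, with $1+F(\tau)$ replaced by its randomized counterpart, because a common $U$ keeps the $p$-values non-increasing in $T$. With independent $U_j$ this monotonicity fails, and no threshold-type equivalence can be expected in general.
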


\noindent\textbf{2. Equivalence between  \scip and \texttt{FASI}.}

We first describe a simplified version of \texttt{FASI} (without sensitive attributes), a procedure for selective classification \citep{rava2021burden}. 
\texttt{FASI} uses a score function $S^{y}(x)$ that estimates the probability of feature $x$ belonging to class $y$. The goal is to select test units belonging to a specific class $y_0 \in [K]$ via the following construction:
\begin{equation}\label{eq:fasiqval}
    \widehat{Q}_j^{y_0}
    = 
    \frac{
        1 + \sum_{i = 1}^n \mathbb{I}\{Y_i \ne y_0,\; S^{y_0}(X_i) \ge S^{y_0}(X_{n+j}) \}
    }{
        1 \vee 
        \sum_{j' = 1}^m \mathbb{I}\{S^{y_0}(X_{n+j'}) \ge S^{y_0}(X_{n+j}) \}
    }
    \,
    \frac{m}{n+1}, 
    \qquad 
    j \in [m].
\end{equation}
A test unit $j$ is selected iff $S^{y_0}(X_{n+j}) \ge \widehat{\tau}^{\text{FASI}}$, where 
\begin{equation}\label{eq:fasitau}
    \widehat{\tau}^{\text{FASI}} = \min \big\{ S^{y_0}(X_{n+j}) : \widehat{Q}_j^{y_0} \le \alpha,\; j \in [m] \big\}.
\end{equation}
Denote $\cS^{\text{FASI}} = \{ j \in [m] : S^{y_0}(X_{n+j}) \ge \widehat{\tau}^{\text{FASI}} \}$.
Within the \scip framework, set $\cC^\cI = \{y_0\}$ and $T(x, \cC^\cI(x)) = S^{y_0}(x)$. Then $\widehat{Q}_j^{y_0} = \widehat{\text{FDP}}(S^{y_0}(X_{n+j}))$, and the operation defined in \eqref{eq:fasiqval}–\eqref{eq:fasitau} is equivalent to that in \eqref{eq:scipck}. Consequently, $\cS^{\text{FASI}} = \cS^{\text{dt}}$, with selected units assigned to class $y_0$.

\noindent\textbf{3. Equivalence between \scip and the method of \citet{zhao2023controlling}.}

The procedure proposed by \citet{zhao2023controlling} extends \texttt{FASI} to selection across all $K$ classes. 
Suppose an estimated probability function $\hat{\Pr}$ is available. 
The procedure first computes a preliminary decision $\hat{Y}_i = \arg\max_{y} \hat{\Pr}(Y = y \mid X = X_i)$ and a score $S_i = S(X_i) = \max_{y} \hat{\Pr}(Y = y \mid X = X_i)$ for each $i \in [n+m]$, where $0$ denotes abstention. 
Then, for $j \in [m]$, define
\begin{equation}\label{eq:zsqval}
    \widehat{Q}_j^{\text{ZS}}
    =  
    \frac{
        1 + \sum_{i = 1}^n \mathbb{I}\{Y_i \ne \hat{Y}_i,\; S(X_i) \ge S(X_{n+j}) \}
    }{
        1 \vee 
        \sum_{j' = 1}^m \mathbb{I}\{S(X_{n+j'}) \ge S(X_{n+j}) \}
    }
    \,
    \frac{m}{n+1}.
\end{equation}
The $j$-th test unit is assigned to class $\hat{Y}_{n+j}$ iff $S(X_{n+j}) \ge \widehat{\tau}^{\text{ZS}}$, where 
\begin{equation}\label{eq:zstau}
    \widehat{\tau}^{\text{ZS}} = \min \big\{ S(X_{n+j}) : \widehat{Q}_j^{\text{ZS}} \le \alpha,\; j \in [m] \big\}.
\end{equation}
The reported sets are $\{\hat{Y}_{n+j} : j \in \mathcal{S}^{\text{ZS}}\}$ with $\mathcal{S}^{\text{ZS}} = \{j \in [m] : S(X_{n+j}) \ge \widehat{\tau}^{\text{ZS}}\}$.

Within the \scip framework, set $\mathcal{C}^{\mathcal{I}}(x) = \{\arg\max_{y} \hat{\Pr}(Y = y \mid X = x)\}$ and $T(x, \mathcal{C}^{\mathcal{I}}(x)) = \max_{y} \hat{\Pr}(Y = y \mid X = x)$. Then $\widehat{Q}_j^{\text{ZS}} = \widehat{\text{FDP}}(S(X_{n+j}))$, and the operation in \eqref{eq:zsqval}–\eqref{eq:zstau} is equivalent to that in \eqref{eq:scipck}. Consequently, $\mathcal{S}^{\text{ZS}} = \mathcal{S}^{\text{dt}}$, with the $j$-th unit assigned to class $\hat{Y}_{n+j}$.

\section{Technical Proofs}\label{asec:technicalproofs}

\subsection{Preliminaries}

For a set or sequence of real values $\{a_1,\dots,a_n\}$, we denote by $\{a_1,\dots,a_n\}_{\varsigma}$ the unordered multiset of these values (allowing duplicates), disregarding the original ordering. We assume that the training set is fixed.

Let $\{V_1,\cdots,V_n,\cdots,V_{n+m}\}$ is a sequence of score values.
For $j \in [m]$, the conformal $p$-values are given by
\begin{equation}\label{eq:conformalpvaluegeneral}
    \tp_j  = 
    \frac{
        \sum_{i = 1}^n \bbI (V_i > V_{n+j})
        + 
        U_j \{ 1 + \sum_{i = 1}^n \bbI (V_i = V_{n+j}) \}
    }{
        n+1
    },
\end{equation}
where $U_j \sim \mbox{Unif }(0,1)$ are independent uniform variables to break ties. The following proposition generalizes Theorem A.1 in \cite{marandon2024adaptive} by allowing ties. 

\begin{proposition}\label{prop:keypropertyconformalpvalue}
    Fix some $j \in [m]$.
    Suppose that $\{V_1,\cdots,V_n,V_{n+j}\}$ is exchangeable conditionally on $\{V_{n+j'}: j' \in [m], j' \ne j\}$. Define an algebra
    \begin{equation*}
        W_j = \bigg\{ 
            \{V_1, \cdots, V_n, V_{n+j}\}_{\varsigma},
            \{V_{n+j'}\colon j' \in [m], j' \ne j\},
            \{U_{j'}\colon j' \in [m], j' \ne j\}
        \bigg\}.
    \end{equation*}
    We further denote $v_{(1)}^j > \cdots > v_{(L^j)}^j$ to be the values without duplicate corresponding to $\{V_1,\cdots,V_n,V_{n+j}\}_{\varsigma}$, $s_l^j = |\{i \in [n]\cup \{n+j\}: V_i = v_{(l)}^j\}|, l \in [L^j]$, 
    and $s_0^j = 0$. In scenarios without ties, we have $L^j = n+1$ and $s_l^j = 1$ for $l \in [L^j]$. 
    Define $l_j$ as the index such that $V_{n+j} = v_{(l_j)}^j$.
    Then, it follows that
    \begin{itemize}
        \item [(1)] $\tp_j$ is measurable with respect to $\{U_j, l_j , W_j\}$;
        \item [(2)] $\tp_j$ is independent of $W_j$ and $\tp_j \sim \mbox{Unif }(0,1)$;
        \item [(3)] The pair $(U_j, l_j)$ is measurable with respect to $\{\tp_j, W_j\}$;
        \item [(4)] For $j' \ne j$, $\tp_{j'}$ is measurable with respect to $\{W_j,\tp_j\}$; 
        In addition, given $W_j$, $\tp_{j'}$ is non-decreasing with respect to $\tp_j$.
        \item [(5)] For any non-decreasing set $D \subseteq [0,1]^m$, the function $\pr \{(\tp_1, \cdots,\tp_m) \in D \mid \tp_j = t \}$ is non-decreasing with respect to $t$.
    \end{itemize}
\end{proposition}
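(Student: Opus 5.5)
The whole proposition rests on one representation. Conditional on $W_j$, the value $\tp_j$ is determined by the rank $l_j$ of $V_{n+j}$ within the multiset, together with $U_j$.

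Write $S_l^j=\sum_{k=0}^{l} s_k^j$. By definition of the ordered distinct values $v^j_{(1)}>\cdots>v^j_{(L^j)}$:
\begin{itemize}
\item $\sum_{i=1}^n\bbI(V_i>V_{n+j})=S^j_{l_j-1}$;
\item $1+\sum_{i=1}^n\bbI(V_i=V_{n+j})=s^j_{l_j}$, since $V_{n+j}$ itself is one of the $s^j_{l_j}$ copies of $v^j_{(l_j)}$.
\end{itemize}
Hence
\begin{equation*}
\tp_j=\frac{S^j_{l_j-1}+U_j\, s^j_{l_j}}{n+1}.
\end{equation*}
The quantities $S^j_l$ and $s^j_l$ are functions of the multiset in $W_j$, so (1) follows immediately.

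For (2), I would use the assumed conditional exchangeability of $(V_1,\dots,V_n,V_{n+j})$ given the other test scores. The tie-breakers $U_{j'}$ with $j'\ne j$ are independent of everything relevant, so they do not change this conditioning. Given the multiset, each of the $n+1$ positions is equally likely to be the one holding $V_{n+j}$. Therefore
\begin{equation*}
\pr(l_j=l\mid W_j)=\frac{s^j_l}{n+1}.
\end{equation*}
The variable $U_j$ is independent of $(l_j,W_j)$. So given $W_j$, $\tp_j$ is uniform on $[S^j_{l-1},S^j_l]/(n+1)$ with probability $s^j_l/(n+1)$. These intervals tile $[0,1]$, and the density is constant equal to $1$ on each. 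Hence $\tp_j\mid W_j\sim{\rm Unif}(0,1)$, which gives both uniformity and independence from $W_j$.

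For (3), almost surely $\tp_j$ lies in the interior of exactly one interval $[S^j_{l-1},S^j_l]/(n+1)$. That interval identifies $l_j$, and then $U_j=\{(n+1)\tp_j-S^j_{l_j-1}\}/s^j_{l_j}$.

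For (4), fix $j'\ne j$ and put $v=V_{n+j'}$, which is in $W_j$. The calibration scores form the multiset minus one copy of $V_{n+j}=v^j_{(l_j)}$. With $A=\#\{\text{multiset}>v\}$ and $B=1+\#\{\text{multiset}=v\}$, both $W_j$-measurable,
\begin{equation*}
(n+1)\tp_{j'}=A-\bbI(V_{n+j}>v)+U_{j'}\{B-\bbI(V_{n+j}=v)\}.
\end{equation*}
This is measurable in $(W_j,\tp_j)$ by (3). As $\tp_j$ increases, $l_j$ is nondecreasing, so $V_{n+j}$ is nonincreasing. Moving $V_{n+j}$ from above $v$, to equal to $v$, to below $v$ gives the three successive values
\begin{equation*}
A-1+U_{j'}B,\qquad A+U_{j'}(B-1),\qquad A+U_{j'}B.
\end{equation*}
These are nondecreasing because $U_{j'}\le1$, which proves the monotonicity claim.

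For (5), (2) gives that the conditional law of $W_j$ given $\tp_j=t$ equals its marginal law. By (4) and the identity coordinate, the vector $(\tp_1,\dots,\tp_m)$ equals $F(W_j,t)$ for a map $F$ that is coordinatewise nondecreasing in $t$. Since $D$ is a nondecreasing set, $\bbI\{F(w,t)\in D\}$ is nondecreasing in $t$ for every $w$. Integrating against the fixed law of $W_j$ yields (5).

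The main obstacle is the rigorous form of (2). One must show $\pr(l_j=l\mid W_j)=s^j_l/(n+1)$ from exchangeability of the unordered multiset when ties are present, and check that the side information $\{U_{j'}\}$ and the other test scores does not break this. I would handle it by averaging over all $(n+1)!$ permutations of $(V_1,\dots,V_n,V_{n+j})$: each permutation leaves $W_j$ unchanged and preserves the conditional law, so the count of positions attaining $v^j_{(l)}$ gives the stated probability.
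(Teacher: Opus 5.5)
Your proposal is correct and follows essentially the same route as the paper. Both arguments rest on the rank representation $\tp_j=(S^j_{l_j-1}+U_j s^j_{l_j})/(n+1)$ and on the conditional rank law $\pr(l_j=l\mid W_j)=s^j_l/(n+1)$. They then recover $(l_j,U_j)$ from $\tp_j$, use monotonicity of $\tp_{j'}$ in $l_j$, and integrate over the independent $W_j$ for (5). Your tiling argument in (2) and your three-case check in (4) are equivalent repackagings of the paper's explicit computation of $\pr(\tp_j\le t\mid W_j)$ via $l_t^*$ and its indicator identity.
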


\emph{Proof of the proposition.} For the simplicity of notation, we omit the superscript $j$ on $s_l^j$, $L^j$, and $v_{(l)}^j$ in the subsequent proof.
    
\textbf{Proof of (1)}:
    The conformal $p$-value $\tp_j$ can be expressed as
    \begin{equation*}
        \tp_j = 
        \frac{
            \sum_{l = 0}^{l_j - 1} s_l 
            + 
            U_j s_{l_j}
        }{
            n+1
        }.
    \end{equation*}
    Since $\{s_l, l \in [L]\}$ is measurable with respect to $W_j$, 
    the conformal $p$-value $\tp_j$ is measurable with respect to $\{U_j, l_j , W_j\}$.

    \textbf{Proof of (2)}:
    As the elements in $\{V_1,\cdots,V_n,V_{n+j}\}$ are exchangeable conditional on $\{V_{n+j'}, j' \in [m], j' \ne j\}$ and the uniform variables $\{U_{j'},j' \in [m], j' \ne j\}$ are independent of the scores, it follows from the arguments in Proposition 2.2 of \cite{angelopoulos2024theoretical} that
    \begin{equation}\label{eq:uniformprobability}
        \pr ( l_j = l \mid W_j) = 
        \frac{
            s_l
        }{
            n+1 
        }, \quad 
        \forall \, l \in [L].
    \end{equation}
    
For $t \in (0,1) $, we define $l_t^* = \max\{l \in [L] \cup \{0\} \colon \sum_{l' = 0}^{l} s_{l'} < (n+1) \, t\}$. By the definition of $l_t^*$, it follows that $\sum_{l = 0}^{l_t^*} s_l < (n+1) \, t $ and 
    $\sum_{l = 0}^{l_t^*+1} s_l \ge (n+1) \, t $.
    Then, for any $t \in [0,1]$ we have
    \begin{equation}\label{eq:uniformempiricalpvalue}
        \begin{aligned}
            \pr \big\{ \tp_j \le t \bigmid W_j \big\}
            = &\,
            \pr \big\{ l_j \le l_t^* \bigmid W_j \big\}
            + 
            \pr \big\{ l_j = l_t^* + 1, s_{l_t^* + 1} U_j \le (n+1) \, t - \sum_{l = 0}^{l_t^*} s_l \bigmid W_j \big\}
            \\ = &\,
            \frac{
                \sum_{l = 0}^{l_t^*} s_l
            }{
                n+1
            }
            + 
            \frac{
                (n+1) \, t - \sum_{l = 0}^{l_t^*} s_l
            }{
                s_{l_t^* + 1}
            }
            \times \frac{
                s_{l_t^* + 1}
            }{
                n+1
            }
            = t,
        \end{aligned}
    \end{equation}
    which implies that $\tp_j \mid W_j \sim \mbox{Unif }(0,1)$. Hence $\tp_j$ is marginally uniform and independent of $W_j$.

    \textbf{Proof of (3)}:
    Since $U_j \in (0,1)$, we can recover $(l_j,U_j)$ from $(W_j,\tp_j)$ by 
    \begin{equation*}
        \begin{aligned}
            l_j = & \, 
            \max \{l \in [L] \colon \sum_{l' = 0}^{l-1} s_{l'} < (n+1) \tp_j \},
            \\ 
            U_j = & \, 
            \frac{
                (n+1) \tp_j - \sum_{l = 0}^{l_j-1} s_l
            }{
                s_{l_j}
            }.
        \end{aligned}
    \end{equation*}
    In addition, the first equality implies that $l_j$ is non-decreasing in $\tp_j$ given $W_j$.

    \textbf{Proof of (4)}:
    For any $j' \ne j$, the conformal $p$-value $\tp_{j'}$ can be expressed as
    \begin{equation*}
        \begin{aligned}
            (n+1) \tp_{j'} 
            = 
            \sum_{v \in \{V_1,\cdots,V_n,V_{n+j}\}_{\varsigma}}
            \Big\{ 
                \bbI (v > V_{n+j'})
                + U_{j'} \cdot \bbI (v=V_{n+j'})
            \Big\}
            + U_{j'} & 
            \\ - 
            \bbI (v_{(l_j)} > V_{n+j'})
            -
            U_{j'} \cdot \bbI (v_{(l_j)} = V_{n+j'})
            & .
        \end{aligned}
    \end{equation*}
    Thus, it is measurable with respect to $\{l_j,W_j\}$.
    Provided that $l_j$ is measurable with respect to $\{W_j , \tp_j\}$, 
    we have $\tp_{j'}$ is measurable with respect to $\{W_j , \tp_j\}$.

    Furthermore, since 
    \begin{equation*}
        \bbI (v_{(l_j)} > V_{n+j'})
        +
        U_{j'} \cdot \bbI (v_{(l_j)} = V_{n+j'})
        = 
        (1-U_{j'}) \bbI (v_{(l_j)} > V_{n+j'})
        + 
        U_{j'} \bbI (v_{(l_j)} \ge V_{n+j'})
    \end{equation*}
    is non-increasing in $l_j$, then $\tp_{j'}$ is non-decreasing in $l_j$ given $W_j$.
Since $l_j$ is non-decreasing in $\tp_j$ given $W_j$, we conclude that $\tp_{j'}$ is non-decreasing in $\tp_j$ given $W_j$.

    \textbf{Proof of (5)}:
    We have established that $\tp_j \perp W_j$ in the second property,
    which implies that
    \begin{equation*}
        \pr \{(\tp_1, \cdots,\tp_m) \in D \mid \tp_j = t \}
        = 
        \E_{W_j} \big[ 
            \pr \{(\tp_1, \cdots,\tp_m) \in D \mid \tp_j = t, W_j \}
        \big].
    \end{equation*}
    Provided that for any $j' \in [m]$, $\tp_{j'}$ is measurable with respect to $(W_j, \tp_j)$ and is non-decreasing with respect to $\tp_j$ given $W_j$,
    it follows that 
    $\pr \{(\tp_1, \cdots,\tp_m) \in D \mid \tp_j = t, W_j \}$ is non-decreasing in $t$. 
 The proof is complete by integration. \qed
 
We provide a lemma which characterizes the property of the oracle conformal $p$-values with exchangeable data and permutation-invariant score function. The first property was established in Lemma 3.2 of \cite{marandon2024adaptive}. The second property is a direct corollary of (2) in Proposition~\ref{prop:keypropertyconformalpvalue}.
\begin{lemma}\label{lem:conditionalexchangeability}
    Let $V_i = V(X_i,Y_i)$ for $i \in [n+m]$.
    If Assumption~\ref{ass:fullexchangeability} holds and the score $V$ is permutation-invariant, it follows that 
    (1) $(V_1,\cdots,V_{n+m})$ is exchangeable conditionally on $\cD^\tr$;
    (2) and the oracle conformal $p$-value $p_j^\cp (Y_{n+j}; V) \sim \mbox{Unif }(0,1)$.
\end{lemma}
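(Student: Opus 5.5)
For part (1), the exchangeability of $(V_1,\dots,V_{n+m})$ conditional on $\cD^\tr$, I would use Definition~\ref{def:rigorousper-inv} directly. Write $V_i = V(X_i,Y_i; Z_1,\dots,Z_{n+m},\mathbb{D})$. Fix a permutation $\varsigma$ of $[n+m]$. Part (a) of permutation-invariance says the function $V(\cdot\,;\cdot)$ is unchanged when its indexing sample is permuted, so
\[
(V_{\varsigma(1)},\dots,V_{\varsigma(n+m)}) = \Phi(Z_{\varsigma(1)},\dots,Z_{\varsigma(n+m)}),
\]
where $\Phi$ is a single fixed measurable map once $\mathbb{D}$ is given. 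Part (b), together with Assumption~\ref{ass:fullexchangeability}, says $(Z_{\varsigma(i)})_i \overset{d}{=} (Z_i)_i$ conditional on $\mathbb{D}\supseteq\cD^\tr$. Applying $\Phi$ to both sides gives $(V_{\varsigma(i)})_i \overset{d}{=} (V_i)_i$ conditionally. The only care needed is to make explicit that $V$ is evaluated pointwise at $Z_i$, with the indexing sample entering only as a symmetric argument. This mirrors Lemma 3.2 of \citet{marandon2024adaptive}.

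For part (2), the oracle $p$-value $p_j^\cp(Y_{n+j};V)$ has exactly the form $\tp_j$ in \eqref{eq:conformalpvaluegeneral} with $V_{n+j}=V(X_{n+j},Y_{n+j})$. I would invoke Proposition~\ref{prop:keypropertyconformalpvalue}(2), which requires $\{V_1,\dots,V_n,V_{n+j}\}$ to be exchangeable conditionally on $\{V_{n+j'}:j'\neq j\}$. This follows from part (1). Full exchangeability of the $n+m$ vector means that any permutation fixing the indices $\{n+j'\}_{j'\ne j}$ leaves the joint law invariant. Conditioning on those coordinates therefore preserves exchangeability of the remaining $n+1$ coordinates, which can be checked via regular conditional distributions or by testing against bounded functions $f(V_{[n]\cup\{n+j\}})\,g(V_{\text{rest}})$. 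The tie-breaking variables $U_{j'}$ are independent of the scores, so the proposition's hypotheses hold. Proposition~\ref{prop:keypropertyconformalpvalue}(2) then gives $\tp_j\mid W_j\sim\mathrm{Unif}(0,1)$, and integrating over $W_j$ gives marginal uniformity.

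The main obstacle is the first step: stating cleanly why a data-dependent score is still "the same function" applied to each point. Once that is formalized, everything else is bookkeeping.
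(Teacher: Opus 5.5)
Your proposal is correct and follows essentially the paper's route. The paper cites Lemma~3.2 of \citet{marandon2024adaptive} for part~(1), which you reprove directly from Definition~\ref{def:rigorousper-inv}, and it obtains part~(2) as a direct corollary of Proposition~\ref{prop:keypropertyconformalpvalue}(2), where your explicit check that $\{V_1,\dots,V_n,V_{n+j}\}$ is exchangeable conditional on $\{V_{n+j'}: j'\neq j\}$ supplies the hypothesis the paper leaves implicit.
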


\subsection{Proof of Proposition~\ref{the:pseudopvalueproperty}}

Let $\tT(x,y) \coloneq \bbI \{y \notin \cC^\cI(x)\} \cdot T(x,\cC^\cI(x))$.
    Given that the trust score is always positive, the generalized conformal $p$-value, $p_j^\ps$, equals $ p_j^\cp (Y_{n+j}; \tT) $ conditionally on $H_{0,j}$ is true.
    Thus, we have
    \begin{equation}\label{eq:pseudopvaluerelaxationa1}
        \pr (p_j^\ps \le \alpha, H_{0,j}) 
        = 
        \pr \{ p_j^\cp (Y_{n+j}; \tT) \le \alpha, H_{0,j} \}
        \le 
        \pr \{ p_j^\cp (Y_{n+j}; \tT) \le \alpha\}.
    \end{equation}
    % Recall that $\tT(x,y) = \bbI \{y \notin \cC(x)\} \times T(x,\cC(x))$.
    Provided that both $\cC^\cI$ and $T$ are permutation-invariant,
    the composite function $\tT$ is permutation-invariant as well.
    Thus, according to Lemma~\ref{lem:conditionalexchangeability},
    we have $p_j^\cp (Y_{n+j}; \tT) \sim \mbox{Unif }(0,1)$.
    Combining it with \eqref{eq:pseudopvaluerelaxationa1} yields that
    \begin{equation*}
        \pr (p_j^\ps \le \alpha, H_{0,j}) 
        \le 
        \pr \{ p_j^\cp (Y_{n+j}; \tT) \le \alpha\}
        = \alpha,
    \end{equation*}
    which completes the proof.

\subsection{Proof of Proposition~\ref{the:prdspvalue}}\label{ssec:prooftheprdspvalue}

We first introduce the PRDS property \citep{benjamini2001control}:

\begin{definition}
    A sequence of random variables $\{x_j,j \in [m]\}$ is said to be PRDS on a subset $\cH \subseteq [m]$ if,
    for any $j' \in \cH$ and non-decreasing measurable set $D \subseteq [0,1]^m$, 
    the function $u: [0,1] \to \pr \{ (x_j, j\in [m]) \in D \mid x_{j'} = u \}$ is non-decreasing.
    Here, we say a set non-decreasing if for any $x \in D$ and $x' \in [0,1]^m$, 
    $x' > x$ (in the coordinate sense) implies that $x' \in D$.
\end{definition}

Here, we consider three variants of the PRDS property for the sequence of generalized conformal p-values.
Notably, Proposition~\ref{the:prdspvalue} corresponds to statement~(b).

\begin{lemma}\label{lem:prdspvalue}
    Let $\bp^\ps = (p_j^\ps,j \in[m])$ be the sequence of generalized conformal $p$-values and $\bp^\ps_{j \to \oc}$ be the sequence obtained by replacing the $j$-th element of $\bp^\ps$ with the oracle one $p_j^\oc = p_j^\cp (Y_{n+j}; \tT)$. 
    Here, $\tT$ is the score defined in the proof of Proposition~\ref{the:pseudopvalueproperty}. 
Let $n_0$ denote the number of ``null units'' in the calibration data.   
    Under Assumptions~\ref{ass:fullexchangeability} and assuming that both the informative set constructor and the trust score function are permutation-invariant to $\{Z_i\}_{i = 1}^{n+m}$, 
    it follows that
    \begin{enumerate}
        \item [(a)]  The sequence $\{p_j^\cp (Y_{n+j}; \tT), j \in [m]\}$ is PRDS on set $\{1,\cdots, m\}$.
        \item [(b)] The sequence $\bp_{j \to \oc}^\ps$ is PRDS on $\{j\}$.
        \item [(c)] Conditional on $H_{0,j}$ is true and $n_0$,
        the sequence $\bp^\ps$ is PRDS on $\{j\}$.
    \end{enumerate}
\end{lemma}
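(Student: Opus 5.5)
The plan is to reduce all three statements to Proposition~\ref{prop:keypropertyconformalpvalue}, and in particular to its property~(5), by writing every $p$-value involved as a conformal $p$-value of the form \eqref{eq:conformalpvaluegeneral} for a suitable sequence of scores. The key device is the composite score $\tT(x,y)=\bbI\{y\notin\cC^\cI(x)\}\, T(x,\cC^\cI(x))$ from the proof of Proposition~\ref{the:pseudopvalueproperty}. Set $\tT_i=\tT(X_i,Y_i)$. Since $T>0$, we have $\tT_i=0$ if $Y_i\in\cC_i^\cI$ and $\tT_i=T_i$ otherwise. It follows that, for every $j'\in[m]$,
\[
\bbI(Y_i\notin\cC_i^\cI,\,T_i>T_{n+j'})=\bbI(\tT_i>T_{n+j'}),\qquad \bbI(Y_i\notin\cC_i^\cI,\,T_i=T_{n+j'})=\bbI(\tT_i=T_{n+j'}).
\]
Hence $p_{j'}^\ps$ is exactly \eqref{eq:conformalpvaluegeneral} with calibration scores $\tT_1,\dots,\tT_n$ and test score $T_{n+j'}$. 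Likewise, $p_{j'}^\oc$ is \eqref{eq:conformalpvaluegeneral} with test score $\tT_{n+j'}$.

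For (a), take $V_i=\tT_i$ for all $i\in[n+m]$. Because $\cC^\cI$ and $T$ are permutation-invariant, so is $\tT$. Lemma~\ref{lem:conditionalexchangeability}(1) then gives exchangeability of $(\tT_1,\dots,\tT_{n+m})$. Exchangeability of the full vector implies that $\{\tT_1,\dots,\tT_n,\tT_{n+j}\}$ is exchangeable conditionally on $\{\tT_{n+j'}:j'\ne j\}$. Applying Proposition~\ref{prop:keypropertyconformalpvalue}(5) for each $j\in[m]$ yields PRDS on $[m]$.

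For (b), fix $j$ and use the mixed sequence $V_i=\tT_i$ for $i\in[n]\cup\{n+j\}$ and $V_{n+j'}=T_{n+j'}$ for $j'\ne j$. With this choice, the conformal $p$-values from \eqref{eq:conformalpvaluegeneral} are exactly the entries of $\bp^\ps_{j\to\oc}$. The proof of Proposition~\ref{prop:keypropertyconformalpvalue} never uses that the off-$j$ test scores come from the same score function as the others; it only uses the explicit algebraic form in (4) together with the exchangeability hypothesis. So what remains is to verify that hypothesis: $\{\tT_1,\dots,\tT_n,\tT_{n+j}\}$ must be exchangeable conditionally on $\{T_{n+j'}:j'\ne j\}$. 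I will obtain this from the joint exchangeability of $(Z_1,\dots,Z_{n+m})$ given $\cD^\tr$. Permutation invariance means that permuting $Z_i$ within $[n]\cup\{n+j\}$ leaves the maps $\cC^\cI,T$ unchanged. That permutation therefore permutes the $\tT$'s of those units and leaves each $T_{n+j'}$, $j'\ne j$, fixed, which gives the conditional exchangeability. Property~(5) then delivers PRDS on $\{j\}$.

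Part (c) is where I expect the main obstacle: conditioning on the events $H_{0,j}$ and $\{n_0=k\}$ can break the exchangeability structure. On $H_{0,j}$ we have $p_j^\ps=p_j^\oc$. Moreover, $p_j^\ps$ depends only on the $n_0$ null calibration units and on unit $n+j$, all of whose $\tT$-values equal their trust scores. I would condition on the identity of the null calibration units as well, i.e., on the null-indicator vector. Because nullity $\bbI\{Y_i\notin\cC^\cI(X_i)\}$ is a permutation-invariant function of each $Z_i$, the $n_0+1$ null units among $[n]\cup\{n+j\}$ remain exchangeable given these indicators and given $\{T_{n+j'}:j'\ne j\}$. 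I will then rewrite $(n+1)p_{j'}^\ps$ as a sum over the multiset of those $n_0+1$ null scores, minus the contribution of unit $n+j$, plus $U_{j'}$, exactly as in the proof of Proposition~\ref{prop:keypropertyconformalpvalue}(4). Steps (1)--(5) go through with $n+1$ replaced by $n_0+1$ in the rank distribution \eqref{eq:uniformprobability}. The only change is that $p_j^\ps$ becomes $\frac{n_0+1}{n+1}$ times a uniform variable rather than a uniform variable itself. This affine rescaling preserves the monotone coupling of $p_{j'}^\ps$ in $p_j^\ps$. Integrating over the non-null configuration with $n_0$ fixed then gives PRDS on $\{j\}$ conditional on $H_{0,j}$ and $n_0$. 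The delicate points to check are, first, that the conditional exchangeability survives conditioning on the nullity pattern, and second, that the measurability and monotonicity claims in (3)--(4) remain valid when ties occur only among null scores.
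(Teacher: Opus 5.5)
Your proposal is correct. For (a) and (b) it is the paper's proof. You identify $p^\ps_{j'}$ and $p^\oc_j$ as conformal $p$-values built on the mixed sequence $(\tT_1,\dots,\tT_n,T_{n+1},\dots,\tT_{n+j},\dots,T_{n+m})$. You obtain the needed conditional exchangeability from permutation invariance. You then invoke property~(5) of Proposition~\ref{prop:keypropertyconformalpvalue}.

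For (c) your route differs from the paper's.
\begin{itemize}
\item \emph{The paper's route.} It keeps the algebra $\tW_j$ from (b), which contains the multiset of all $n+1$ values $\tT_i$, including the zeros of covered units. It handles $H_{0,j}=\{\tT_{n+j}>0\}=\{l_j<L\}$ by Bayes' rule: restricting the rank law $s_l/(n+1)$ to $l<L$ gives $s_l/(n-s_L+1)$. Hence $p_j^\ps$ is uniform on $(0,(n-s_L+1)/(n+1))$ given $\tW_j$, $H_{0,j}$ and $s_L$. Monotonicity and integration then proceed as before.
\item \emph{Your route.} You condition on the full nullity pattern and re-establish exchangeability among the $n_0+1$ null units.
\end{itemize}

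The point you flag as delicate does go through. Let $E_A$ be the event that the null calibration units are exactly $A$ and $H_{0,j}$ holds. This event is invariant under permutations of $A\cup\{n+j\}$. By permutation invariance of $\cC^\cI$ and $T$, such a permutation merely permutes the corresponding $T_i$ and fixes every $T_{n+j'}$, $j'\neq j$. Covered calibration units contribute nothing to any $p^\ps_{j'}$, because $T_{n+j'}>0$. So, given $E_A$, (c) is literally Proposition~\ref{prop:keypropertyconformalpvalue} applied to $n_0$ calibration points, with all coordinates multiplied by $(n_0+1)/(n+1)$. This factor equals the paper's $(n-s_L+1)/(n+1)$, where $s_L$ counts covered calibration units.

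You should state explicitly why the final mixing over $A$ with $|A|=n_0$ preserves monotonicity. The law of $p_j^\ps$ given $E_A$ does not depend on $A$, so the mixing weights $\pr(E_A\mid p_j^\ps=t,H_{0,j},n_0)$ do not depend on $t$.

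On what each route buys:
\begin{itemize}
\item The paper's argument is more economical. It needs no exchangeability claim beyond (b), only a Bayes restriction of the rank law.
\item Yours needs only exchangeability of the null units given the nullity pattern. That is the invariance a PU-learned score such as $\hat g_{\rm pu}$ in Proposition~\ref{prop:finitesamplefcrpulearning}, where (c) is actually used, still enjoys. The paper's Bayes step starts from $\pr(l_j=l\mid\tW_j)=s_l/(n+1)$, which also requires covered calibration units to be exchangeable with unit $n+j$.
\end{itemize}
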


Part (a) is an extension of Theorem~3.3 in \citet{marandon2024adaptive} that allows for ties.
Part (b) extends Lemma~5 of \citet{jin2023selection}. It characterizes the correlation structure within the sequence of generalized $p$-values under $H_{0,j}$, which facilitates the standard leave‑one‑out technique for establishing finite‑sample FCR control.
Part (c) is useful in the proof of Proposition~\ref{prop:finitesamplefcrpulearning}.

    \textbf{Proof of (a)}:
    The oracle conformal $p$-value $p_j^\cp (Y_{n+j}; \tT)$ coincides with the conformal $p$-value defined in~\eqref{eq:conformalpvaluegeneral} by setting $V_i = \tT(X_i,Y_i)$.
    As both $\cC^\cI$ and $T$ are permutation-invariant,
    the composite function $\tT(x,y) = \bbI (y \notin \cC^\cI (x)) \times T(x, \cC^\cI (x))$ is permutation-invariant as well.
    According to Lemma~\ref{lem:conditionalexchangeability}, it follows that $\{\tT(X_i,Y_i), i \in [n+m]\}$ is exchangeable.
    Thus, directly applying the fifth property of Proposition~\ref{prop:keypropertyconformalpvalue} yields the result.

    \textbf{Proof of (b)}:
    % By setting $V_i = \tT_i, i \in [n] \cup \{n+j\}$ and $V_{n+j'} = T_{n+j'}, j' \in [m]\setminus\{j\}$, 
    The $p$-value sequence $\bp_{j \to {\rm \oc}}^\ps$ equals the sequence of conformal $p$-values built on score values $\{\tT_1,\cdots, \tT_n, T_{n+1}, \cdots, T_{n+j - 1}, \tT_{n+j}, T_{n+j+1}, \cdots, T_{n+m}\}$. 
    % coincides with the conformal $p$-values in \eqref{eq:conformalpvaluegeneral}.
    As both $\tT$ and $T$ are permutation-invariant,  $\{ (T_i,\tT_i), i \in [n+m] \}$ is an exchangeable sequence.
    Consequently, we have
    \begin{equation*}
        \Big\{ \tT_i, i \in [n] \cup \{n+j\} \Big\} 
        \mbox{ is exchangeable conditional on }
        \Big\{ T_{n+j'}, j' \in [m], j' \ne j \Big\}.
    \end{equation*}
 Applying (5) of Proposition~\ref{prop:keypropertyconformalpvalue}, we conclude that  
    $$
    \pr \{ (p_1^\ps, \cdots,p_{j-1}^\ps,p_j^\cp (Y_{n+j}; \tT), p_{j+1}^\cp,\cdots,p_m^\ps) \in D \mid p_j^\cp (Y_{n+j}; \tT) = t \}
    $$
     is non-decreasing in $t$.

    \textbf{Proof of (c)}:
   Define the algebra
    \begin{equation}\label{eq:algebratildewj}
        \tW_j = 
        \Big\{ 
            \{      
                \tT_1,\cdots,\tT_n,\tT_{n+j}
            \}_{\varsigma},
            \{
                T_{n+j'}, j' \in [m], j' \ne j
            \},
            \{
                U_{n+j'}, j' \in [m], j' \ne j
            \}
        \Big\}.
    \end{equation}
Denote $v_{(1)}^j > \cdots > v_{(L^j)}^j$ to be the values without duplicate corresponding to $\{\tT_1,\cdots,\tT_n,\tT_{n+j}\}_{\varsigma}$, $s_l^j = |\{i \in [n]\cup \{n+j\}: V_i = v_{(l)}^j\}|, l \in [L^j]$, 
    and $s_0^j = 0$
    By the definition of $\tT$, $\tT_i > 0$ if $Y_i \notin \cC_i$ and $\tT_i = 0$ otherwise.
    Thus, it follows that $v_{(L)} = 0$ (when $\tT_i > 0$ for all $i\in [n] \cup \{n+j\}$, we retain $v_{(L)} = 0$ but setting $s_L = 0$).
    In addition, it follows that $\pr (l_j = l \mid \tW_j) = s_l / (n+1)$ as in \eqref{eq:uniformprobability}.

    According to the definition of $\tT$, the event $H_{0,j} = \{Y_{n+j} \notin \cC_{n+j}\}$ is equivalent to $\{\tT_{n+j} > 0\}$.
    Thus, it follows that 
    \begin{equation*}
        \begin{aligned}
            \pr \{ l_j = l \mid 
            & \,
            \tW_j, H_{0,j} \} 
            = 
            \pr \{ l_j = l \mid \tW_j, \tT_{n+j} > 0 \} 
            \\ = & \,
            \pr \{ l_j = l, \tT_{n+j} > 0 \mid \tW_j \} / 
            \pr \{ \tT_{n+j} > 0 \mid \tW_j \} 
            = 
            \begin{cases}
                0, & l = L
                \\ 
                \frac{
                    s_l
                }{
                    n- s_L + 1
                }, &  0 < l < L 
            \end{cases}.
        \end{aligned}
    \end{equation*}
    This result also implies that $p_j^\cp (Y_{n+j}; \tT) < (n-s_L + 1)/ (n+1)$ conditional on~$H_{0,j}$.
    
For $t \in (0,(n-s_L + 1)/ (n+1)) $, define $l_t^* = \max\{l \in [L-1] \cup \{0\} \colon \sum_{l' = 0}^{l} s_{l'} < (n+1) \, t\}$. Then 
\begin{equation*}%\label{eq:uniformempiricalpvalue}
        \begin{aligned}
            \pr \big\{ p_j^\ps \le t \bigmid \tW_j, H_{0,j} \big\}
            = &\,
            \pr \big\{ l_j \le l_t^* \bigmid \tW_j, H_{0,j} \big\}
            \\ &  \quad + 
            \pr \big\{ l_j = l_t^* + 1, s_{l_t^* + 1} U_j \le (n+1) \, t - \sum_{l = 0}^{l_t^*} s_l \bigmid \tW_j, H_{0,j} \big\}
            \\ = &\,
            \frac{
                \sum_{l = 0}^{l_t^*} s_l
            }{
                n-s_L+1
            }
            + 
            \frac{
                (n+1) \, t - \sum_{l = 0}^{l_t^*} s_l
            }{
                s_{l_t^* + 1}
            }
            \times \frac{
                s_{l_t^* + 1}
            }{
                n-s_L+1
            }
            = 
            \frac{n+1}{n-s_L+1} t.
        \end{aligned}
    \end{equation*}
    Note that $s_L=n_0$, it follows that
    \begin{equation}\label{eq:shrinkagesuperuniform}
        \forall \, t \in \Big(0,\frac{n- n_0 + 1}{n+1} \Big),
        \quad \quad 
        \pr (p_j^\ps \le t \mid \tW_j, H_{0,j}, n_0 \big\}
        = 
        \frac{n+1}{n-n_0+1} t,
    \end{equation}
Hence $p_j^\ps \perp \tW_j$ and $p_j^\ps \sim U (0,(n-n_0 + 1)/ (n+1))$ conditional on $H_{0,j}$ and $n_0$.

    Then, following the arguments in the proof of Proposition~\ref{prop:keypropertyconformalpvalue},
    we can see that $p_{j'}^\ps, j' \in [m]$ is measurable with respect to $\{\tW_j, p_j^\ps\}$ given $\{H_{0,j}, n_0\}$ and is non-decreasing to $p_j^\ps$.
    Thus, we have
    \begin{equation*}
        \begin{aligned}
            & \, \pr \Big\{ 
                \big( 
                    p_1^\ps,\cdots,p_m^\ps 
                \big) \in D 
                \mid p_j^\ps = t , H_{0,j}, n_0
            \Big\}
            \\ & \, = 
            \E_{\tW_j \mid (H_{0,j},n_0)}
            \Big[
                \pr \Big\{ 
                    \big( 
                        p_1^\ps,\cdots,p_m^\ps 
                    \big) \in D 
                    \mid p_j^\ps = t , H_{0,j}, n_0, \tW_j
                \Big\}
            \Big].
        \end{aligned}
    \end{equation*}
    By integration, it follows that 
    $\pr \{ 
        ( 
            p_1^\ps,\cdots,p_m^\ps 
        ) \in D 
        \mid p_j^\ps = t , H_{0,j}, n_0
    \}$ is non-decreasing to $t$,
    which completes the proof.

\subsection{Proof of Theorem~\ref{the:fcrcontrolfinitesample}}
    Let $\hk = |\cS|$ be the number of selected units.
    Then, FCR can be expressed as
    \begin{equation}\label{eq:fcrreexpress}
        \fcr = \E \bigg\{ 
            \frac{
                \sum_{j = 1}^m \bbI (p_j^\ps \le \alpha \hk/ m , H_{0,j})
            }{
                \hk
            }
        \bigg\}=
        \sum_{j = 1}^m 
        \E \bigg\{ 
            \frac{
                \bbI (p_j^\ps \le \alpha \hk/ m , H_{0,j})
            }{
                \hk
            }
        \bigg\}.
    \end{equation}

    Let $\kappa(p_1,\cdots,p_m)$ be the cardinality of selection set obtained by applying \bhfdr procedure on $p$-value sequence $(p_1,\cdots,p_m)$.
    Then, the selection number $\hk$ can be expressed as $\hk = \kappa (\bp^\ps)$ and 
    \begin{equation}\label{eq:theorem4decomp1}
        \begin{aligned}
            \E \bigg\{ 
                \frac{
                    \bbI (p_j^\ps \le \alpha \hk/ m , H_{0,j})
                }{
                    \hk
                }
            \bigg\}
            = & \,
            \sum_{k = 1}^{m}
            \frac{1}{k} \times
            \pr \big\{ \hk = k, p_j^\ps \le \alpha k / m, H_{0,j} \big\}
            \\ \overset{(i)}{=} & \,
            \sum_{k = 1}^{m}
            \frac{1}{k} \times
            \pr \big\{ \kappa (\bp^\ps_{j \to \oc}) = k, p_j^\cp (Y_{n+j}; \tT) \le \alpha k / m, H_{0,j} \big\}
            \\ \le & \,
            \sum_{k = 1}^{m}
            \frac{1}{k} \times
            \pr \big\{ \kappa (\bp^\ps_{j \to \oc}) = k, p_j^\cp (Y_{n+j}; \tT) \le \alpha k / m \big\}
            \\ \overset{(ii)}{=} & \, 
            \sum_{k = 1}^{m}
            \frac{\alpha}{m} \times
            \pr \big\{ \kappa (\bp^\ps_{j \to \oc}) = k \bigmid p_j^\cp (Y_{n+j}; \tT) \le \alpha k / m \big\},
        \end{aligned}
    \end{equation}
    where the step (i) follows that $p_j^\ps = p_j^\cp (Y_{n+j};\tT)$ conditionally on $H_{0,j}$, 
    and the step (ii) is due to the uniform property of the oracle conformal $p$-value (Lemma~\ref{lem:conditionalexchangeability}).

    Proposition~\ref{the:prdspvalue} shows that $\bp_{j \to \oc}^\ps$ is PRDS on $\{j\}$.  Using standard arguments in \cite{benjamini2001control} or \cite{wang2022elementary}, we can show that
    \begin{equation}\label{eq:prdsresult}
        \sum_{k = 1}^{m}
        \pr \big\{ \kappa (\bp^\ps_{j \to \oc}) = k \bigmid p_j^\cp (Y_{n+j}; \tT) \le \alpha k / m \big\}
        \le 1.
    \end{equation}

    Combining the results in \eqref{eq:fcrreexpress}, \eqref{eq:theorem4decomp1}, and \eqref{eq:prdsresult} yields that
    \begin{equation*}
        \fcr = 
        \sum_{i = 1}^m 
        \E \bigg\{ 
            \frac{
                \bbI (p_j^\ps \le \alpha \hk/ m , H_{0,j})
            }{
                \hk
            }
        \bigg\}
        \le \alpha,
    \end{equation*}
    which completes the proof.

\subsection{Proof of Theorem~\ref{the:fcrcontrolexact}}

We first present a useful lemma that characterizes the distribution of oracle conformal $p$-values under the alternatives. The proof of this lemma is deferred to Section~\ref{asssec:proofofanticonservativepseudopvalue}.

\begin{lemma}\label{lem:anticonservativepseudopvalue}
    Let $\bar{\pi}_0^* = \pi_0^* \times n / (n + 1)$.
    Under Assumptions~\ref{ass:fullexchangeability} to~\ref{ass:independentnullindicator},
    it follows that,
    for any $\alpha \le \bar{\pi}_0^*$,
    $\pr \{ p_j^\cp(Y_{n+j}; \tT) \le \alpha, H_{1,j}\}
    \le
    \alpha \times \exp \{  
        - 2 n (\bar{\pi}_0^* - \alpha)^2
    \}$.
\end{lemma}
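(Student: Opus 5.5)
Under the alternative the oracle $p$-value $p_j^\cp(Y_{n+j};\tT)$ has an explicit closed form. It depends only on the number of null calibration units and on $U_j$. So the plan is to write the probability as an expectation over that count and then apply Hoeffding's inequality to a binomial tail.

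\textbf{Step 1: closed form under $H_{1,j}$.} Recall $\tT(x,y)=\bbI\{y\notin\cC^\cI(x)\}\,T(x,\cC^\cI(x))$, where the trust score takes values in $\bbR^+$. I will use this positivity, i.e.\ $T>0$, so that $\tT_i>0$ if and only if $Y_i\notin\cC^\cI_i$, and $\tT_i=0$ otherwise. Let $n_0=\sum_{i=1}^n\bbI(Y_i\notin\cC_i^\cI)$. On $H_{1,j}$ we have $\tT_{n+j}=0$. Hence exactly the $n_0$ null calibration scores exceed $\tT_{n+j}$, and the remaining $n-n_0$ calibration scores tie with it at $0$. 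Substituting into \eqref{eq:conformalpvaluegeneral} gives, on $H_{1,j}$,
\begin{equation*}
p_j^\cp(Y_{n+j};\tT)=\frac{n_0+U_j\,(n-n_0+1)}{n+1}.
\end{equation*}

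\textbf{Step 2: conditioning on $n_0$.} By Assumption~\ref{ass:independentnullindicator}(a), the null indicators of the calibration units and of test unit $j$ are i.i.d.\ Bernoulli$(\pi_0^*)$. Therefore $n_0\sim\mathrm{Bin}(n,\pi_0^*)$ and $n_0$ is independent of $H_{1,j}$. The variable $U_j$ is independent of everything else. On $H_{1,j}$, the event $\{p_j^\cp\le\alpha\}$ requires $n_0<(n+1)\alpha$, and given $n_0$ it has probability $((n+1)\alpha-n_0)/(n+1-n_0)$. A one-line check gives
\begin{equation*}
\frac{(n+1)\alpha-n_0}{n+1-n_0}\le\alpha \iff -n_0\le-\alpha n_0,
\end{equation*}
which holds because $\alpha\le 1$. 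Hence
\begin{equation*}
\pr\{p_j^\cp(Y_{n+j};\tT)\le\alpha,\,H_{1,j}\}\le(1-\pi_0^*)\,\alpha\,\pr\{n_0<(n+1)\alpha\}\le\alpha\,\pr\{n_0<(n+1)\alpha\}.
\end{equation*}

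\textbf{Step 3: Hoeffding bound.} Write
\begin{equation*}
\{n_0<(n+1)\alpha\}=\{n_0/n-\pi_0^*<-\varepsilon\},\qquad \varepsilon=\pi_0^*-\tfrac{n+1}{n}\alpha=\tfrac{n+1}{n}(\bar\pi_0^*-\alpha).
\end{equation*}
Since $\alpha\le\bar\pi_0^*$, we have $\varepsilon\ge\bar\pi_0^*-\alpha\ge0$. Hoeffding's inequality then gives
\begin{equation*}
\pr\{n_0<(n+1)\alpha\}\le e^{-2n\varepsilon^2}\le e^{-2n(\bar\pi_0^*-\alpha)^2},
\end{equation*}
which is the claimed bound.

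The computations are routine. The one point needing care is Step 2: justifying that $n_0$ is independent of $H_{1,j}$ and that $U_j$ is independent of both. This comes from the i.i.d.\ Bernoulli structure in Assumption~\ref{ass:independentnullindicator}(a), together with $\cC^\cI$ being fixed (it depends only on the training data). The boundary case where all calibration units are null ($n_0=n$) fits the same formula and needs no separate treatment.
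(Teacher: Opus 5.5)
Your proof is correct and follows essentially the same route as the paper. You use the closed form $p_j^\cp(Y_{n+j};\tT)=\{n_0+U_j(n-n_0+1)\}/(n+1)$ on $H_{1,j}$, reduce to $\alpha$ times the binomial lower tail of $n_0$ at $(n+1)\alpha$, and finish with Hoeffding's inequality. The differences are bookkeeping only: you condition on $n_0$ and integrate out $U_j$ exactly, where the paper restricts to $U_j\le\alpha$ and uses $(\alpha-U_j)/(1-U_j)\le\alpha$; and you correctly keep the factor $\pr(H_{1,j})=1-\pi_0^*$, using the independence in Assumption~\ref{ass:independentnullindicator}(a), which the paper's first displayed equality silently drops.
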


   For a fixed $j \in \cH_0$, define the algebra $\tW_j$ as in \eqref{eq:algebratildewj}.
    Conditional on $H_{0,j}$, the $p$-value sequence $\bp^\ps = \bp^\ps_{j \to \oc}$ is measurable with respect to $\{\tW_j,l_j,U_j\}$.
    Thus, the number of selection $\hk = \kappa(\bp^\ps)$ is measurable with respect to $\{\tW_j,l_j,U_j\}$ , conditional on $H_{0,j}$. Let $\bp^\ps = \bp (\tW_j,l_j,U_j)$ and $\hk = \kappa (\tW_j,l_j,U_j)$.
 Denote $\bar{\bp}^\ps = \bp (\tW_j,1,0) = (\barp_1^\ps,\cdots,\barp_m^\ps)$.
    When there is no tie, it can be verified that
    \begin{equation*}
        \forall \, i \in [m], 
        \,\,
        \begin{cases}
            \barp_i^\ps \le p_i^\ps, & \mbox{if}\,\, p_i^\ps \le p_j^\ps, 
            \\ 
            \barp_i^\ps = p_i^\ps, & \mbox{if}\,\, p_i^\ps > p_j^\ps.
        \end{cases}
    \end{equation*}
    Thus, applying Lemma D.6 in \cite{marandon2024adaptive} yields that $\kappa(\tW_j,1,0) = \kappa(\tW_j,l_j,U_j) = \hk$ conditional on $\{H_{0,j}, p_j^\ps \le \alpha \hk / m\}$.

 The expectation in~\eqref{eq:fcrreexpress} can be expressed as
    \begin{equation*}
        \E \bigg\{ 
            \frac{
                \bbI (p_j^\ps \le \alpha \hk/ m , H_{0,j})
            }{
                \hk
            }
        \bigg\}
        = 
        \E_{(\tW_j,U_j)} \bigg[ 
            \E \bigg\{ 
                \frac{
                    \bbI (p_j^\ps \le \alpha \hk/ m , H_{0,j})
                }{
                    \hk
                }
                \bigmid \tW_j,U_j
            \bigg\}
        \bigg].
    \end{equation*}
 As $\hk = \kappa(\tW_j,1,0)$ conditional on $\{H_{0,j}, p_j^\ps \le \alpha \hk / m\}$, 
    it follows that
    \begin{equation*}
        \E \bigg\{ 
            \frac{
                \bbI (p_j^\ps \le \alpha \hk/ m , H_{0,j})
            }{
                \hk
            }
            \bigmid \tW_j,U_j
        \bigg\}
        = 
        \E \bigg\{ 
            \frac{
                \bbI (p_j^\ps \le \alpha \kappa(\tW_j,1,0)/ m , H_{0,j})
            }{
                \kappa(\tW_j,1,0)
            }
            \bigmid \tW_j,U_j
        \bigg\}.
    \end{equation*}
    Since $U_j$ is independent of $\tW_j$, we have
    \begin{equation}\label{eq:theorem4decomp2}
        \begin{aligned}
            & \, \E_{\tW_j,U_j} \bigg[ 
                \E \bigg\{ 
                    \frac{
                        \bbI (p_j^\ps \le \alpha \kappa(\tW_j,1,0)/ m , H_{0,j})
                    }{
                        \kappa(\tW_j,1,0)
                    }
                    \bigmid \tW_j,U_j
                \bigg\}
            \bigg]
            \\ = & \,
            \E_{\tW_j} \bigg[ 
                \E \bigg\{ 
                    \frac{
                        \bbI (p_j^\cp (Y_{n+j};\tT) \le \alpha \kappa(\tW_j,1,0)/ m , H_{0,j})
                    }{
                        \kappa(\tW_j,1,0)
                    }
                    \bigmid \tW_j
                \bigg\}
            \bigg]
            \\ = & \,
            \E_{\tW_j} \bigg[ 
                \E \bigg\{ 
                    \frac{
                        \bbI (p_j^\cp (Y_{n+j};\tT) \le \alpha \kappa(\tW_j,1,0)/ m)
                    }{
                        \kappa(\tW_j,1,0)
                    }
                    \bigmid \tW_j
                \bigg\}
            \bigg]
            \\ & \quad \quad - 
            \E_{\tW_j} \bigg[ 
                \E \bigg\{ 
                    \frac{
                        \bbI (p_j^\cp (Y_{n+j};\tT) \le \alpha \kappa(\tW_j,1,0)/ m , H_{1,j})
                    }{
                        \kappa(\tW_j,1,0)
                    }
                    \bigmid \tW_j
                \bigg\}
            \bigg].
        \end{aligned}
    \end{equation}

    Proposition~\ref{prop:keypropertyconformalpvalue} shows that $p_j^\cp(Y_{n+j};\tT)$ is independent of $\tW_j$,
    implying that
    \begin{equation}\label{eq:theorem4equiv3}
        \begin{aligned}
            & \, \E_{\tW_j} \bigg[ 
                \E \bigg\{ 
                    \frac{
                        \bbI (p_j^\cp (Y_{n+j};\tT) \le \alpha \kappa(\tW_j,1,0)/ m)
                    }{
                        \kappa(\tW_j,1,0)
                    }
                    \bigmid \tW_j
                \bigg\}
            \bigg]
            \\ = & \,
            \E_{\tW_j} \bigg[ 
                \frac{
                    \pr \big\{ 
                        p_j^\cp (Y_{n+j};\tT) \le \alpha \kappa(\tW_j,1,0)/ m
                        \bigmid 
                        \tW_j
                    \big\}
                }{\kappa(\tW_j,1,0)}
            \bigg]
            = 
            \frac{\alpha}{m}.
        \end{aligned}
    \end{equation}

    Let $\bar{\pi}_0^* = \pi_0^* \times n/(n+1)$. Consider the second term in~\eqref{eq:theorem4decomp2}. We have 
    \begin{equation}\label{eq:upperboundalterexpect}
        \begin{aligned}
            & \, \E_{\tW_j} \bigg[ 
                \E \bigg\{ 
                    \frac{
                        \bbI (p_j^\cp (Y_{n+j};\tT) \le \alpha \kappa(\tW_j,1,0)/ m , H_{1,j})
                    }{
                        \kappa(\tW_j,1,0)
                    }
                    \bigmid \tW_j
                \bigg\}
            \bigg]
            \\ \le & \,
            \E_{\tW_j} \bigg[ 
                \E \bigg\{ 
                        \bbI (p_j^\cp (Y_{n+j};\tT) \le \alpha , H_{1,j})
                    \bigmid \tW_j
                \bigg\}
            \bigg]
            \\ = & \,
            \pr (p_j^\cp (Y_{n+j};\tT) \le \alpha , H_{1,j})
            \le \alpha \times \exp\{-2n (\bar{\pi}_0^* - \alpha)^2\},
        \end{aligned}
    \end{equation}
    where the last inequality follows from Lemma~\ref{lem:anticonservativepseudopvalue}.

The proof is complete by combining the results in~\eqref{eq:theorem4decomp2}, \eqref{eq:theorem4equiv3}, and \eqref{eq:upperboundalterexpect}:
    \begin{equation*}
        \fcr \ge \alpha - m\alpha \times \exp\{-2n (\bar{\pi}_0^* - \alpha)^2\}. \qed
    \end{equation*}

\subsection{Proof of Proposition~\ref{prop:asymptoticdominance}}

\subsubsection{The validity of the modified \infospp}\label{ssec:modifiedinfospp}
We refer to the proposed algorithm with $\{U_j,j \in [m]\}$ 
substituted by shared uniform variable $U \sim \mbox{Unif }(0,1)$
as the \textit{homogeneous} algorithm.
The proposition below presents that
the homogeneous algorithm is valid for FCR control.

\begin{proposition}
     Under Assumptions~\ref{ass:fullexchangeability}, and assuming that both the informative set constructor and the trust score function are permutation-invariant to $\{Z_i\}_{i = 1}^{n+m}$,
     it follows that the homogeneous algorithm controls FCR at level $\alpha$.
\end{proposition}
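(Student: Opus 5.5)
The proof follows the template of Theorem~\ref{the:fcrcontrolfinitesample}: a leave-one-out decomposition of the FCR, combined with a PRDS argument. Write $p_j^{\pshm}$ for the homogeneous generalized conformal $p$-values, in which every unit uses the same tie-breaker $U$, and write $\hk=\kappa(\bp^{\pshm})$. As in \eqref{eq:fcrreexpress},
\[
\fcr=\sum_{j=1}^m\sum_{k=1}^m \frac1k\,\pr\big\{\hk=k,\ p_j^{\pshm}\le \alpha k/m,\ H_{0,j}\big\}.
\]
Under $H_{0,j}$ we have $\tT_{n+j}=T_{n+j}$. Hence $p_j^{\pshm}$ coincides with the oracle $p$-value $p_j^{\cphm}$, built from the scores $\tT$ with tie-breaker $U$, and we may replace the $j$-th coordinate by this oracle value. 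We then drop the event $H_{0,j}$ and reduce the claim to two facts. The first is that $p_j^{\cphm}\sim\mathrm{Unif}(0,1)$. The second is that $\bp^{\pshm}_{j\to\oc}$ is PRDS on $\{j\}$, which yields the analogue of \eqref{eq:prdsresult}.

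For the first fact, we fix $j$ and argue as in Proposition~\ref{prop:keypropertyconformalpvalue}(2). We condition on the algebra $W_j^{\homo}$ generated by the multiset $\{\tT_1,\dots,\tT_n,\tT_{n+j}\}_{\varsigma}$ and the scores $\{T_{n+j'}:j'\neq j\}$, but not on $U$. Exchangeability of $\{(T_i,\tT_i)\}$, which Lemma~\ref{lem:conditionalexchangeability} supplies for permutation-invariant $\cC^\cI$ and $T$, gives $\pr(l_j=l\mid W_j^{\homo})=s_l/(n+1)$. Since $U$ is independent of both the scores and $l_j$, the computation \eqref{eq:uniformempiricalpvalue} goes through verbatim. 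Therefore $p_j^{\cphm}$ is uniform and independent of $W_j^{\homo}$.

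The main obstacle is the PRDS step. In Proposition~\ref{prop:keypropertyconformalpvalue}(4) the other $p$-values depended on the private tie-breakers $U_{j'}$, which could be placed inside $W_j$. Here $U$ is shared, so it cannot be put in the conditioning algebra without destroying the uniformity of $p_j$. We instead use part (3): the pair $(l_j,U)$ is a measurable function of $(p_j^{\cphm},W_j^{\homo})$, and $l_j$ is non-decreasing in $p_j$. Every other coordinate has the form
\[
(n+1)\,p_{j'}=\#\{\text{calibration scores}>T_{n+j'}\}+U\cdot\big(1+\#\{\text{ties with }T_{n+j'}\}\big),
\]
so it is a function of $(l_j,U,W_j^{\homo})$. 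We must show this function is coordinatewise non-decreasing in $p_j$ once we substitute $l_j(p_j)$ and $U(p_j)$. Within a block where $l_j$ is fixed, $U$ increases linearly in $p_j$, so $p_{j'}$ is non-decreasing there. Across a block boundary, $l_j$ increases by one and $U$ resets from $1$ to $0$. We then check that the jump in the counting term compensates the reset. Indeed, moving $\tT_{n+j}$ to a smaller-score block removes one calibration-type score, at the level $v_{(l_j)}$, from the comparison set of unit $j'$. That removal can only increase the count of scores exceeding $T_{n+j'}$, or convert a tie into a strict exceedance. In either case the change is at least the $U$-weighted amount lost, so $p_{j'}$ does not decrease at the boundary.

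A cleaner alternative, if this bookkeeping becomes messy, is to note that the BH rejection set is monotone in the $p$-values. One would then condition on $U$ and on a tie-broken ordering, use the fact that the event $\{\kappa=k,\ p_j\le\alpha k/m\}$ has the required monotone structure, and finish via the Wang-type elementary bound \eqref{eq:prdsresult}. Once PRDS is in hand, the chain \eqref{eq:theorem4decomp1}--\eqref{eq:prdsresult} gives each summand at most $\alpha/m$, and summing over $j$ gives $\fcr\le\alpha$.
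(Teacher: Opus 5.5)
Your leave-one-out decomposition is fine, and so is the uniformity of $p_j^{\cphm}(Y_{n+j};\tT)$ given $\tW_j^{\homo}$. This is exactly the paper's route (Lemma~\ref{lem:prdspropertyhomo}). The gap is the block-boundary step: the compensation you assert goes the wrong way.

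Fix $j'\neq j$, and let $t=T_{n+j'}$ and $M=\{\tT_1,\dots,\tT_n,\tT_{n+j}\}_{\varsigma}$. When $l_j$ passes from $l$ to $l+1$, the calibration multiset seen by $j'$ gets $v_{(l)}$ back and loses $v_{(l+1)}$, while $U$ resets from $1$ to $0$. Comparing $(n+1)p_{j'}$ at $(l,U\uparrow 1)$ with its value at $(l+1,U=0)$ gives a net change of
\[
\bbI(v_{(l)}\ge t)-\bbI(v_{(l+1)}>t)-1-\#\{v\in M: v=t\}\;\le\;0,
\]
with equality only when $v_{(l+1)}<t<v_{(l)}$. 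The counting term gains at most one, and only when $t$ lies strictly between the two levels; the reset always costs at least one. So $p_{j'}$ is a sawtooth in $p_j$ that drops at essentially every boundary. For instance, if $T_{n+j'}$ exceeds every element of $M$, then $p_{j'}=U/(n+1)$ identically.

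This non-monotonicity survives integration over $\tW_j^{\homo}$, so $\bp^{\pshm}_{j\to\oc}$ is in general \emph{not} PRDS on $\{j\}$. Take $n=1$, $m=2$, all units null, continuous scores. For the increasing set $\{p_{j'}\ge 0.4\}$, the conditional probability given $p_j=t$ is $1$ on $[0.4,0.5)$ but $2/3$ on $[0.5,0.9)$.

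The conclusion itself also fails in the generality stated. Take $n=2$, $m=4$, $\alpha=4/9$, and six exchangeable units. Exactly one unit, placed uniformly at random, has $Y\in\cC^\cI(X)$ and a trust score above all others; the remaining trust scores are i.i.d.\ continuous. Direct enumeration gives conditional FCR $2/3$ when that unit lands in calibration (probability $1/3$). It gives $31/90$ when it lands in the test set (probability $2/3$). Hence $\fcr=61/135>60/135=\alpha$. With independent $U_j$ the same model gives $507/1215<\alpha$.

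The mechanism is the boundary effect above. With a shared $U$, every test null with exactly one calibration null above it sits at $(1+U)/3$. These nulls all enter together at $k=m$ whenever $U\le 1/3$.

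The paper's proof of Lemma~\ref{lem:prdspropertyhomo} makes the same leap by appealing to part (4) of Proposition~\ref{prop:keypropertyconformalpvalue}. That monotonicity relied on the private $U_{j'}$ being held fixed inside $W_j$, which is no longer possible with a shared $U$. Your fallback of conditioning on $U$ does not rescue the argument. Given $U=u$, the null $p$-value is no longer super-uniform, so the Wang-type bound no longer yields $\alpha/m$ per null.
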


For $j \in [m]$, define 
    \begin{eqnarray*}
        p_j^\pshm &= & 
        \frac{
            \sum_{i = 1}^n 
            \bbI (Y_i \notin \cC_i, T_i > T_{n+j})
            + 
            \{
                1 + 
                \sum_{i = 1}^n 
                \bbI (Y_i \notin \cC_i, T_i = T_{n+j})
            \} 
            \cdot
            U
        }{  
            1 + n 
        }; \\
        p_j^\cphm (y; \tT)
       & = & 
        \frac{
            \sum_{i = 1}^n
            \bbI \{\tT_i > \tT(X_{n+j},y)\}
            + 
            \big[
                1 + \sum_{i = 1}^n
                \bbI \{\tT_i = \tT(X_{n+j},y)\}
            \big]
            \cdot U
        }{
            1 + n
        }
 \end{eqnarray*}
    be the generalized $p$-values and conformal $p$-values in homogeneous setting, respectively.
    Let $\bp^\pshm = (p_1^\pshm, \cdots,p_m^\pshm)$.
    Denote $\bp^\pshm_{j \to \oc}$ as the homogeneous generalized $p$-value sequence 
    with the $j$-th element replaced by conformal $p$-value $p_j^\cphm (Y_{n+j}; \tT)$.
    The following lemma establishes the super-uniformity of the homogeneous conformal $p$-values 
    and the PRDS property of $\bp^\pshm_{j \to \oc}$.

    \begin{lemma}\label{lem:prdspropertyhomo}
        Under Assumptions~\ref{ass:fullexchangeability} and assuming that both the informative set constructor and the trust score function are permutation-invariant,
        it follows that
        $p_j^\cphm (Y_{n+j}; \tT) \sim \mbox{Unif }(0,1)$ and 
        the sequence $\bp_{j \to \oc}^\pshm$ is PRDS on $\{j\}$.
    \end{lemma}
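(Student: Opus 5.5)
The plan has two parts: prove uniformity of $p_j^\cphm(Y_{n+j};\tT)$ directly, and then rerun the proof of Proposition~\ref{prop:keypropertyconformalpvalue} with the shared $U$ in place of the unit-specific tie-breakers. Throughout, $\tT(x,y)=\bbI\{y\notin\cC^\cI(x)\}\,T(x,\cC^\cI(x))$ is the composite score from the proof of Proposition~\ref{the:pseudopvalueproperty}.

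\textbf{Uniformity.} The score $\tT$ is permutation-invariant, so Lemma~\ref{lem:conditionalexchangeability}(1) makes $(\tT_1,\dots,\tT_{n+m})$ exchangeable. For fixed $j$, define the $\sigma$-algebra
\[
W_j^{\homo}=\big\{\{\tT_1,\dots,\tT_n,\tT_{n+j}\}_\varsigma,\ \{T_{n+j'},\tT_{n+j'}: j'\ne j\}\big\}.
\]
The key difference from Proposition~\ref{prop:keypropertyconformalpvalue} is that $W_j^{\homo}$ must \emph{not} contain $U$, since $U$ now also enters $p_j^\cphm$. Exchangeability of $\{(T_i,\tT_i)\}$ gives $\pr(l_j=l\mid W_j^{\homo})=s_l/(n+1)$, exactly as in \eqref{eq:uniformprobability}. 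Because $U\sim\mathrm{Unif}(0,1)$ is independent of $(l_j,W_j^{\homo})$, the computation in \eqref{eq:uniformempiricalpvalue} goes through verbatim. Hence $p_j^\cphm\mid W_j^{\homo}\sim\mathrm{Unif}(0,1)$, and in particular $p_j^\cphm$ is independent of $W_j^{\homo}$.

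\textbf{PRDS on $\{j\}$.} I would mirror properties (3)--(5) of Proposition~\ref{prop:keypropertyconformalpvalue}.

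First, given $W_j^{\homo}$, the pair $(l_j,U)$ is recovered from $p_j^\cphm$ by the same inversion formulas as in (3), and $l_j$ is non-decreasing in $p_j^\cphm$.

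The main obstacle is the next step. Each other coordinate
\[
(n+1)\,p_{j'}^\pshm=\sum_{i\le n}\bbI(\tT_i>T_{n+j'})+\Big\{1+\sum_{i\le n}\bbI(\tT_i=T_{n+j'})\Big\}U
\]
now depends on $p_j^\cphm$ through \emph{two} channels: through $l_j$, as in (4), and additionally through $U$. So I have to check that it is jointly coordinatewise non-decreasing in $(l_j,U)$ along the path traced out as $p_j^\cphm$ increases.

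Write the multiset sum as in the proof of (4) and subtract the contribution of $v_{(l_j)}$. The remaining expression is $A_{j'}+B_{j'}U$, where $A_{j'}$ is non-decreasing in $l_j$ and $B_{j'}\ge 0$ is the tie count. The count $B_{j'}$ may drop by one when $v_{(l_j)}=T_{n+j'}$, and this is exactly the delicate case.

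I would handle it by an explicit case split on the position of $T_{n+j'}$ relative to $v_{(l_j)}$:
\begin{itemize}
\item[(a)] If $T_{n+j'}$ is strictly above or strictly below $v_{(l_j)}$, the coordinate is an affine function of $U$ with nonnegative slope, and its intercept is monotone in $l_j$.
\item[(b)] If $T_{n+j'}=v_{(l_j)}$, then $\tT_{n+j}$ is tied with $T_{n+j'}$. On the path, increasing $p_j^\cphm$ within a block increases $U$ with $l_j$ fixed. At a block boundary, $U$ jumps from $1$ back to $0$ while $l_j$ increases by one.
\end{itemize}
At such a boundary I would verify that $p_{j'}^\pshm$ does not decrease. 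Just before the boundary the value is at most the block's upper end, and after the jump $l_j$ moves past $T_{n+j'}$, which adds at least one strict exceedance. This should give the required monotonicity.

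If the boundary inequality fails in the tied case (b), the fallback is to note that such ties have probability zero unless scores are discrete, and to handle discrete ties through the deterministic $U\equiv 1$ comparison.

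\textbf{Conclusion.} Once coordinatewise monotonicity holds given $W_j^{\homo}$, integrating over $W_j^{\homo}$, using its independence from $p_j^\cphm$, yields (5), that is, PRDS on $\{j\}$.
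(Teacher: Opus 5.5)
\paragraph{The uniformity part is correct; the PRDS part has a genuine gap.}
Your uniformity argument matches the paper's. With $U$ kept out of $W_j^{\homo}$, $l_j$ has conditional law $s_l/(n+1)$ and $U$ is independent of it, so $p_j^{\cphm}(Y_{n+j};\tT)$ is uniform and independent of $W_j^{\homo}$.

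The PRDS half fails at the step you flag, and the failure is not confined to the tied case (b). Let $a$ and $b$ be the numbers of elements of $\{\tT_1,\dots,\tT_n,\tT_{n+j}\}_\varsigma$ strictly above, and equal to, $T_{n+j'}$. Then $(n+1)p_{j'}^{\pshm}$ equals $a-1+(b+1)U$, $a+bU$, or $a+(b+1)U$, according as $v_{(l_j)}$ is above, equal to, or below $T_{n+j'}$. As $p_j^{\cphm}$ increases, every increment of $l_j$ resets the shared $U$ from $1$ to $0$.
\begin{itemize}
\item In your case (a), when $v_{(l)}$ and $v_{(l+1)}$ lie on the same side of $T_{n+j'}$, the intercept does not move. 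So $p_{j'}^{\pshm}$ drops by $(b+1)/(n+1)$ at that boundary.
\item On entering the tied block it drops by $b\ge 1$: the one strict exceedance gained is outweighed by the lost $U$-term.
\item On leaving the tied block it also drops by $b$, and no strict exceedance is gained at all.
\item Only a tie-free crossing $v_{(l)}>T_{n+j'}>v_{(l+1)}$ is continuous.
\end{itemize}
So, given $W_j^{\homo}$, $p_{j'}^{\pshm}$ is a sawtooth in $p_j^{\cphm}$, not a monotone function. This never arose in Proposition~\ref{prop:keypropertyconformalpvalue}(4), because there $p_{j'}$ involves $U_{j'}$ rather than $U_j$.

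\paragraph{The PRDS claim itself is false.}
Averaging over $W_j^{\homo}$ does not repair the sawtooth, and your tie fallback is irrelevant. Take $n=1$, $m=2$, $j=1$, and make all three units null (e.g.\ $\cC^\cI\equiv(c_0,\infty)$ with $Y\le c_0$ a.s., so $\tT_i=T_i$), with i.i.d.\ continuous positive trust scores. Then $p_1^{\cphm}=\{\bbI(T_1>T_2)+U\}/2$ and $p_2^{\pshm}=\{\bbI(T_1>T_3)+U\}/2$.
\begin{itemize}
\item For $t<1/2$, the event $\{p_1^{\cphm}=t\}$ forces $T_1<T_2$ and $U=2t$. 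So $p_2^{\pshm}$ equals $t+1/2$ or $t$, with probabilities $1/3$ and $2/3$.
\item For $t>1/2$, it forces $T_1>T_2$ and $U=2t-1$. So $p_2^{\pshm}$ equals $t$ or $t-1/2$, with probabilities $2/3$ and $1/3$.
\end{itemize}
Hence $\pr(p_2^{\pshm}>0.3\mid p_1^{\cphm}=t)$ equals $1$ on $(0.3,0.5)$ but $2/3$ on $(0.5,0.8)$. This violates PRDS on $\{1\}$ for the non-decreasing set $\{p_2>0.3\}$, with no ties present.

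The weaker version conditioning on $\{p_1\le t\}$ also fails: $\pr(p_2^{\pshm}>0.1\mid p_1^{\cphm}\le t)$ falls from $13/15$ at $t=0.5$ to $5/6$ at $t=0.6$.

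The paper's own proof asserts the same monotonicity (``following the arguments in the proofs of fourth result'') and has the same gap. Only the uniformity half of the lemma holds. FCR control for the shared-$U$ algorithm would need a different argument.
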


    We defer its proof to Section~\ref{ssec:proofofauxiliarylemmas}.
    It follows from Lemma~\ref{lem:prdspropertyhomo} that
    \begin{equation*}
        \begin{aligned}
            \fcr = &\, 
            \sum_{j = 1}^m \sum_{k = 1}^m
            \frac{1}{k} \times \pr \big\{
                \kappa(\bp^\pshm) = k, p_j^\pshm \le \alpha k / m, H_{0,j}
            \big\}
            \\ = &\,
            \sum_{j = 1}^m \sum_{k = 1}^m
            \frac{1}{k} \times \pr \big\{
                \kappa(\bp^\pshm_{j \to \oc}) = k, p_j^\cphm (Y_{n+j}; \tT) \le \alpha k / m, H_{0,j}
            \big\}.
        \end{aligned}
    \end{equation*}
    It further follows that
    \begin{equation*}
        \begin{aligned}
            \fcr = &\, 
            \sum_{j = 1}^m \sum_{k = 1}^m
            \frac{1}{k} \times \pr \big\{
                \kappa(\bp^\pshm_{j \to \oc}) = k, p_j^\cphm (Y_{n+j}; \tT) \le \alpha k / m
            \big\} 
            \\ \le &\, 
            \frac{\alpha}{m}
            \sum_{j = 1}^m \sum_{k = 1}^m
            \pr \big\{
                \kappa(\bp^\pshm_{j \to \oc}) = k \bigmid p_j^\cphm (Y_{n+j}; \tT) \le \alpha k / m
            \big\} 
            \le \alpha.
        \end{aligned} \qed
    \end{equation*}

\subsubsection{Reformulation of the modified \infospp}\label{ssec:reformulationinfospp}

In this subsection, we present an equivalent formulation of the homogeneous version of Algorithm~\ref{alg:infospp}, 
which is useful for the analysis.
Let $p_{(1)}^\pshm \le \cdots \le p_{(m)}^\pshm$ be the ordered $p$-values 
and $\{l_j\}_{j = 1}^m$ be one of the possible permutation of $[M]$ such that $p_{l_j}^\pshm = p_{(j)}^\pshm$ for all $j \in [m]$.

Since the trust score is $T_i = 1-\tq_i^+$, it follows that $\tq_{n+l_j}^+ = \tq_{n+(j)}^+$,
where $\tq_{n+(j)}^+$ is obtained by ordering $\{\tq_{n+j}^+, j \in [m]\}$ as
$\tq_{n+(1)}^+ \le \cdots \le \tq_{n+(m)}^+$.
By the definition, \bhfdr procedure selects $\{j \colon p_j^\pshm \le p_{(\hk)}^\pshm\}$
where
$\hk = \max \{k \in [m] \colon p_{(k)}^\pshm \le \alpha k / m\}$.
Let $\htau^+ = \tq_{n+(\hk)}^+$.
Then, the prediction sets reported by modified \infospp is $\{\cC_{n+j}^{\cp, \tq_{n+j}^+}: \tq_{n+j}^+ \le \htau^+ \}$.

Now, we introduce an equivalent form to \bhfdr procedure, 
which is motivated by counting knockoff \citep{weinstein2017power}:
\begin{equation*}
    \htau_{\rm ck}^+ = \max \big\{ 
        \tau \in \{\tq_{n+j}^+,j \in [m]\}_{\varsigma}
        : 
        \hfdp (\tau) \le \alpha
    \big\},
\end{equation*}
where
\begin{equation}\label{eq:estimatefdp}
    \hfdp (\tau) = 
    \frac{
        \sum_{i = 1}^n 
        \bbI (Y_i \notin \cC_i) \cdot 
        \bbI (\tq_i^+ < \tau)
        + 
        \{
            1 + 
            \sum_{i = 1}^n 
            \bbI (Y_i \notin \cC_i) \cdot 
            \bbI (\tq_i^+ = \tau)
        \} 
        \cdot
        U
    }{
        \sum_{j = 1}^m 
        \bbI (\tq_{n+j}^+ \le \tau)
    }
    \times 
    \frac{m}{n+1}.
\end{equation}
It follows from Lemma 2.2 in \cite{mary2022semisupervised} that $\htau_{\rm ck}^+ = \htau^+$.\qed

\subsubsection{Proof of Proposition~\ref{prop:asymptoticdominance}}

We begin with a lemma that characterizes the asymptotic behavior of the threshold $\htau^0_\bh$.
Its proof is deferred to Section~\ref{asssec:proveconvergedthreshold}. 
\begin{lemma}\label{lem:convergedthreshold}
    Suppose that Assumptions~\ref{ass:monotoneinfoconstraints} and~\ref{ass:conditionsforpoweranalysis} hold. 
    Then, under the first condition of Assumption~\ref{ass:smoothnesscondition}, it follows that $\lim\inf \htau_\bh^0 > 0$ a.s.;
    Otherwise, under the second condition of Assumption~\ref{ass:smoothnesscondition}, we have $\htau_\bh^0 \to 0$ a.s..
\end{lemma}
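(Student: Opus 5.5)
The plan is to work with the $\cI$-adjusted $p$-values $\tq^0_i = \tq(X_i;\cD^\ca_0)$, $i\in[n+m]$, which are i.i.d.\ given $\cD^\tr$ and $\cD^\ca_0$ by Assumption~\ref{ass:conditionsforpoweranalysis}. The threshold $\htau^0_\bh$ is the self-consistent \bhfdr threshold, so it equals $\max\{a\in[0,1]: a \le \alpha \hat G(a)\}$ with $\hat G(a) = (n+m)^{-1}\sum_{i}\bbI(\tq^0_i\le a)$. The asymptotics of $\htau^0_\bh$ are therefore governed by how the empirical cdf $\hat G$ sits relative to the line $a\mapsto a/\alpha$. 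We will replace $\hat G$ by a population limit $G$ and read off the behaviour of the largest crossing point.

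\textbf{Step 1: identify the population limit $G$.} Using Assumption~\ref{ass:monotoneinfoconstraints}(ii), the event $\{\tq(x;\cD^\ca_0)\le q\}$ is equivalent to $\{\tnu(x)\le \hat\nu_q\}$, where $\hat\nu_q$ is the empirical upper-$q$ quantile of the calibration scores $V(X_i,Y_i)$, $i\in\cD^\ca_0$. Indeed, $\cC^{\cp,q}(x)=\{y: V(x,y)\le \hat\nu_q\}$ up to tie conventions, and informativeness is monotone in the radius. By Glivenko--Cantelli on $\cD^\ca_0$, $\hat\nu_q$ converges to the population quantile $\nu_q$ of $V(X,Y)$. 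A second Glivenko--Cantelli argument on the $n+m$ test/calibration points then gives
\[
\sup_q |\hat G(q) - G(q)|\to 0 \quad \text{a.s.},
\]
where $G(q)=\pr\{\tnu(X)\le \nu_q\}$. Reparametrizing by $\nu$ with $q=\pr\{V(X,Y)>\nu\}$, the BH inequality $q\le \alpha G(q)$ becomes
\[
\pr\{V>\nu\}\le \alpha\,\pr\{\tnu(X)\le\nu\}.
\]

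\textbf{Step 2: case (a).} This is the step I expect to be the main obstacle. Assumption~\ref{ass:smoothnesscondition} is stated in terms of $\pr\{\tnu(X)>\nu^*\}$ and $\pr\{V>\nu^*\}/\alpha$, so first I would check which tail appears and translate it into the crossing condition above, possibly after a complementing or sign correction. Under (a) there is a fixed $q^*=\pr\{V>\nu^*\}>0$ at which the population inequality holds strictly. Uniform convergence then gives $q^*\le \alpha\hat G(q^*)$ eventually a.s., so $\htau^0_\bh\ge q^*>0$ eventually, and hence $\liminf\htau^0_\bh\ge q^*>0$. The delicate points are the strictness and the boundary/tie behaviour of the empirical quantile at $q^*$. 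I would handle these by shrinking $q^*$ slightly, using right-continuity from Assumption~\ref{ass:monotoneinfoconstraints}(ii).

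\textbf{Step 3: case (b).} Here the uniform margin $\epsilon$ says the population inequality fails by at least $\epsilon$ at every level $q>0$ in the support. For any fixed $\delta>0$, uniform convergence shows that $a>\alpha\hat G(a)$ for all $a\in[\delta,1]$ eventually a.s., and hence $\htau^0_\bh<\delta$. Since $\delta$ is arbitrary, $\htau^0_\bh\to0$ a.s. The only care needed is that the sup-norm error be uniform over $a\in[\delta,1]$, including at atoms of $\tnu$, which the monotone, bounded Glivenko--Cantelli class handles.
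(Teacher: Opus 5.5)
Your plan has the same architecture as the paper's proof: express the BH threshold on the score scale, apply Glivenko--Cantelli, and check whether a crossing point survives in the limit. However, Step~1 gets the key event backwards, and this is exactly why you could not match Assumption~\ref{ass:smoothnesscondition} in Step~2. By Assumption~\ref{ass:monotoneinfoconstraints}(i), shrinking a set preserves informativeness. So $\{y:V(x,y)\le\nu\}\in\cI$ holds for radii below a critical value and fails from that value on, and $\tnu(x)$ must be read as this critical radius (read literally as the smallest informative radius, it would be $0$ under (i)). Since $\cC^{\cp,q}(x)=\{y:V(x,y)\le\hat\nu_q\}$ with $\hat\nu_q$ non-increasing in $q$, the level-$q$ set is informative iff $\hat\nu_q<\tnu(x)$. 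Hence $\{\tq(x;\cD^\ca_0)\le q\}=\{\tnu(x)>\hat\nu_q\}$, the complement of the event you wrote. Equivalently, $\tq(x;\cD^\ca_0)=\hat F_0(\tnu(x))$ with $\hat F_0(v)=(|\cD^\ca_0|+1)^{-1}[1+\sum_{i\in\cD^\ca_0}\bbI\{V(X_i,Y_i)\ge v\}]$, which is non-increasing in $\tnu(x)$; this is the identity the paper starts from. With your direction the crossing condition reads $\pr\{V>\nu\}\le\alpha[1-\pr\{\tnu(X)>\nu\}]$. Condition~(a) only bounds that right-hand side from above, so no complementing or sign correction turns (a) into a crossing, or (b) into a uniform failure. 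With the correct direction the population BH condition is $\pr\{V>\nu\}\le\alpha\,\pr\{\tnu(X)>\nu\}$, which is literally condition~(a), and (b) is its failure by a uniform margin; your Steps~2 and~3 then have the right shape.

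Separately, I would not rely on $\sup_q|\hat G(q)-G(q)|\to0$. Since $\tq^0_i=\hat F_0(\tnu(X_i))$, $\hat G$ composes the empirical law of the $\tnu(X_i)$ with a random calibration transform. Uniform convergence in $q$ fails as soon as $\tnu(X)$ has an atom (or charges a region where $V$ has no mass): $G$ then has a jump whose location $\hat G$ only reproduces up to $O(|\cD^\ca_0|^{-1/2})$ fluctuations. The Glivenko--Cantelli property of monotone classes does not help, because the problem is the random composition, not the class. The paper sidesteps this by indexing thresholds by $\nu$. Set $\tau(\nu)=(|\cD^\ca_0|+1)^{-1}[1+\sum_{i\in\cD^\ca_0}\bbI\{V(X_i,Y_i)>\nu\}]$. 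Then $\{\tnu(X_i)>\nu\}\subseteq\{\tq^0_i\le\tau(\nu)\}$, with equality when $\nu$ is a calibration score. So the BH inequality compares $(n+m)^{-1}\sum_i\bbI\{\tnu(X_i)>\nu\}$ with $\tau(\nu)/\alpha$ at the same $\nu$, and both sides converge uniformly in $\nu$. In case~(a) this makes $\tau(\nu^*)$ feasible for the self-consistent BH threshold for all large samples, giving $\liminf\htau^0_\bh\ge\pr\{V>\nu^*\}>0$ without any perturbation of $q^*$. In case~(b) the margin survives uniformly in $\nu$, so eventually no threshold bounded away from $0$ is feasible and $\htau^0_\bh\to0$.
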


    Recall that the output of modified \infosp and \infospp can be written as
    $\cR^\isp_{\rm \scriptscriptstyle mod} = 
    \{\cC^{\cp, \htau_{\bh}^0}_{n+j} \colon \tq_{n+j}^0 \le \htau_{\bh}^0\}$ and 
    $\cR_{\rm \scriptscriptstyle mod}^{\rm \scriptscriptstyle ISP+}
    =
    \{\cC^{\cp, \tq_{n+j}^+}_{n+j} \colon \tq_{n+j}^+ \le \htau^+\}$.
    Conditional on the event $\{\htau^+ \ge \htau_\bh^0\} $, we have $\tq_i^+ = \max (\tq_i^0, \htau_\bh^0) \ge \htau_\bh^0$, implying
    $\{j\colon \tq_{n+j}^0 \le \htau_{\bh}^0\} \subseteq \{j\colon \tq_{n+j}^+ \le \htau^+\}$.
    Moreover, for any unit reported by \infosp, $ \cC^{\cp, \htau_{\bh}^0}_{n+j} = \cC^{\cp, \tq_{n+j}^+}_{n+j}$.
    Thus, $\cR_{\rm  mod}^\isp \subseteq \cR_{\rm mod}^{\isp+}$ with probability tending to~1 as long as 
    \begin{equation*}
        \lim_{|\cD_0^\ca|,n,m\to\infty} \pr ( \htau^+ \ge \htau_\bh^0 )=1.
    \end{equation*}
    We present a proof for this result as follows. 

    According to Lemma~\ref{lem:convergedthreshold}, it follows that $\htau_\bh^0 \to 0$ a.s. under the second condition of Assumption~\ref{ass:smoothnesscondition}.
    Then, the result holds naturally.
    It remains to focus on the scenario where the first condition of Assumption~\ref{ass:smoothnesscondition} holds:

    By the definition of $\htau^+$ ($\htau_{\rm ck}^+$), 
    it suffices to establish that $\lim_{n,m,|\cD^\ca_0|\to \infty} \pr (\hfdp(\htau_\bh^0) \le \alpha) = 1$,
    where $\hfdp(\cdot)$ is defined in \eqref{eq:estimatefdp}.
    Since $\tq_i^+ = \max (\tq_i^0, \htau_\bh^0) \ge \htau_\bh^0$,
    $\hfdp(\htau_\bh^0)$ can be expressed as 
    \begin{equation*}
        \begin{aligned}
            \hfdp(\htau_\bh^0)
            = & \,
            \frac{
                (n+1)^{-1} \big[
                    1 + 
                    \sum_{i = 1}^n \bbI (\tq_i^+ = \htau_\bh^0) 
                    \bbI \{Y_i \notin \cC^{\cp, \htau_\bh^0 } (X_i; \cD^\ca_0)\} 
                \big]
            }{
                m^{-1} \sum_{j = 1}^m 
                \bbI (\tq_{n+j}^+ = \htau_\bh^0)
            }
            \times U
            \\ \le & \,
            \frac{
                (n+1)^{-1} \big[
                    1 + 
                    \sum_{i = 1}^n 
                    \bbI \{Y_i \notin \cC^{\cp, \htau_\bh^0 } (X_i; \cD^\ca_0)\} 
                \big]
            }{
                m^{-1} \sum_{j = 1}^m 
                \bbI (\tq_{n+j}^+ = \htau_\bh^0)
            }
            \times U.
        \end{aligned}
    \end{equation*}
    It further follows that 
    \begin{equation*}%\label{eq:hfdpbound2}
        \hfdp(\htau_\bh^0) 
        \le 
        \frac{
            (n+1)^{-1} \big[
                1 + 
                \sum_{i = 1}^n 
                \bbI \{Y_i \notin \cC^{\cp, \htau_\bh^0 } (X_i; \cD^\ca_0)\} 
            \big]
        }{
            m^{-1} \sum_{j = 1}^m 
            \bbI (\tq_{n+j}^0 \le \htau_\bh^0)
        }
        \times U
        := 
        A_n \times U.
    \end{equation*}

    The dominator of~$A_n$  is equal to
    \begin{equation}\label{eq:upperbounddominator}
        \begin{aligned}
            \frac{\sum_{i = 1}^{n+m} 
            \bbI (\tq_i^0 \le \htau_\bh^0)}{n+m} 
            - 
            \frac{\sum_{i = 1}^{n+m} 
            \bbI (\tq_i^0 \le \htau_\bh^0)}{n+m} 
            +
            \frac{\sum_{j = 1}^m 
            \bbI (\tq_{n+j}^0 \le \htau_\bh^0)}{m}
            % \\ \ge & \, 
            \ge 
            o_{a.s.}(1) + {\htau_\bh^0}/{\alpha},
        \end{aligned}
    \end{equation}
    where the inequality follows from the definition of $\htau_\bh^0$ and the uniform convergence property of the empirical distribution function:
    \begin{equation*}
        \sup_{t \in \bbR} 
        \bigg| 
            \frac{\sum_{j = 1}^m 
            \bbI (\tq_{n+j}^0 \le t)}{m} 
            - 
            \frac{\sum_{i = 1}^{n+m} 
            \bbI (\tq_i^0 \le t)}{n+m} 
        \bigg|
        \overset{a.s.}{\longrightarrow} 0.
    \end{equation*}

    Then, we focus on the numerator of~$A_n$. 
    First, by the definition of conformal prediction set given in~\eqref{eq:splitconformalpredictionset}, the event $\{Y_i \notin \cC^{\cp, \htau_\bh^0 } (X_i; \cD^\ca_0)\} $ is equivalent to $\{V(X_i, Y_i) > \widehat{Q} \}$, where $\widehat{Q}$ is defined as 
    \begin{equation*}
        \widehat{Q} = 
        \min 
        \bigg\{ 
            \nu \in \{V(X_i, Y_i), i \in \cD^\ca_0\}: 
            1 + 
            \sum_{i \in \cD^\ca_0}
            \bbI \{ V(X_i, Y_i) > \nu \} 
            \le 
            \htau_\bh^0 (|\cD^\ca_0| + 1)
        \bigg\}.
    \end{equation*}
    Consequently, the numerator of~$A_n$ equals 
    \begin{equation}\label{eq:decomposee2}
        \begin{aligned}
            \frac{
                \big[
                    1 + 
                    \sum_{i = 1}^n 
                    \bbI \{ V(X_i, Y_i ) > \widehat{Q}\} 
                \big]
            }{n+1}
            = & \, 
            \frac{
                \big[
                    1 + 
                    \sum_{i \in \cD^\ca_0} 
                    \bbI \{ V(X_i, Y_i ) > \widehat{Q}\} 
                \big]
            }{|\cD^\ca_0| + 1}
            +
            \\ & \hspace{-4cm}
            \frac{
                \big[
                    1 + 
                    \sum_{i = 1}^n 
                    \bbI \{ V(X_i, Y_i ) > \widehat{Q}\} 
                \big]
            }{n+1}
            - 
            \frac{
                \big[
                    1 + 
                    \sum_{i \in \cD^\ca_0} 
                    \bbI \{ V(X_i, Y_i ) > \widehat{Q}\} 
                \big]
            }{|\cD^\ca_0| + 1}.
        \end{aligned}
    \end{equation}
    
    According to the definition of $\widehat{Q}$, the first term of the right hand side in~\eqref{eq:decomposee2} is no greater than $\htau_\bh^0$.
    Meanwhile, using the uniform convergence property of the empirical distribution function, the second line of~\eqref{eq:decomposee2} is of order $o_{a.s.} (1)$.
    Thus, the numerator of~$A_n$ follows that 
    \begin{equation*}
        \frac{
            \big[
                1 + 
                \sum_{i = 1}^n 
                \bbI \{ V(X_i, Y_i ) > \widehat{Q}\} 
            \big]
        }{n+1}
        \le 
        \htau_\bh^0 + o_{a.s.}(1). 
    \end{equation*}
    Then, combining it with~\eqref{eq:upperbounddominator} yields that 
    \begin{equation}\label{eq:liminfxin}
        A_n
        \le 
        \frac{
            \htau_\bh^0 + o_{a.s.} (1)
        }{
            \htau_\bh^0 / \alpha + o_{a.s.} (1)
        }
        = 
        \alpha + o_{a.s.}(1),
    \end{equation}
    as Lemma~\ref{lem:convergedthreshold} suggests that $\lim\inf \htau_\bh^0 > 0$ under Assumption~\ref{ass:smoothnesscondition}(a).  Finally, 
    \begin{equation*}
        \begin{aligned}
            1 \ge 
            \lim_{n\to \infty} \pr \big\{ \hfdp(\htau_\bh^0) \le \alpha \big\} 
            \ge &\,  
            \lim_{n\to \infty} \pr \big( 
                U \le \alpha / A_n
            \big)
            \overset{(i)}{=} 
            \lim_{n\to \infty}
            \E \big[ (\alpha / A_n) \wedge 1 \big]
            \\ \overset{(ii)}{\ge} & \,
            \E \Big[
                \underset{n \to \infty}{\lim \inf} \, 
                \{(\alpha / A_n) \wedge 1\}
            \Big]
            = 
            \E \Big[
                1 \wedge \underset{n \to \infty}{\lim \inf} \, 
                (\alpha / A_n)
            \Big]
            \overset{(iii)}{=} 1,
        \end{aligned}
    \end{equation*}
 where step (i) follows because $U$ is an independent uniform random variable, step (ii) uses Fatou's lemma, and step (iii) follows from the bound in \eqref{eq:liminfxin}. \qed

\subsection{Proof of Theorem~\ref{the:maxpower}}

    For any trust score $T: \cX \to \bbR$, we denote $S(x) = 1-T(x)$ and 
    the selection set should be of the form $\{j: S(X_{n+j}) \le \tau\}$ for some $\tau \in \bbR$.
    Similarly, we have $S^*(x) = 1-T^\ora (x,\cC^\cI (x)) = \pr (Y \notin \cC^\cI (X) \mid X = x)$,
    $S' (x) = 1-T'(x)$, $\tau_\alpha^* = 1-t^*(\alpha)$, and $\tau' = 1-t'$.

 As the $\mfcr$ and $\cpow$  are measurable with respect to the score function $S$ and the threshold $\tau$, we have    \begin{equation*}
        \begin{aligned}
            % \label{al:mfcrdef}
            \mfcr (S,\tau) = & \,
            \frac{
                \E \big[
                    \bbI \big\{ S(X) \le \tau, Y \notin \cC^\cI (X) \big\}
                \big]
            }{
                \E \big[
                    \bbI \big\{ S(X) \le \tau\}
                \big]
            }
            = 
            \frac{
                \int \bbI \big\{ S(x) \le \tau\} S^*(x) \rmd \bP_X(x)
            }{
                \int \bbI \big\{ S(x) \le \tau\}  \rmd \bP_X(x)
            },
            \\ 
            % \label{al:cpowdef}
            \cpow (S,\tau) = & \,
            m \times 
            \E \big[
                \bbI \big\{ S(X) \le \tau, Y \in \cC^\cI (X) \big\}
            \big]
            = 
            \int \bbI \big\{ S(x) \le \tau\} \{1-S^*(x)\} \rmd \bP_X(x),
        \end{aligned}
    \end{equation*}
    where $\bP_X$ is the probability measurement for random variable $X$.
    Note that $\mfcr (S^*,\tau_\alpha^*) = \alpha$ and $\mfcr (S',\tau') \le \alpha$, 
   we have    \begin{align}
        \label{al:expressoptimalmfcr}
        &\, \int \bbI \big\{ S^*(x) \le \tau_\alpha^* \big\} 
        \big\{ S^*(x) - \alpha \big\} \rmd \bP_X(x) 
        = 0,
        \\ 
        \label{al:expressaltermfcr}
        &\, \int \bbI \big\{ S'(x) \le \tau' \big\} 
        \big\{ S^*(x) - \alpha \big\} \rmd \bP_X(x) 
        \le 0.
    \end{align}

    The first equality \eqref{al:expressoptimalmfcr} implies that $\tau_\alpha^* \ge \alpha$ almost surely under $\bP_X$.
    
    \textbf{Case I: $\tau_\alpha^* = \alpha$.}
    If $\tau_\alpha^* = \alpha$, it follows that $\bP_X(S^*(x) \ge \alpha) = 1$.
    Thus, the inequality \eqref{al:expressaltermfcr} implies that $\bP_X(S'(X) \le \tau', S^*(X) > \alpha) = 0$.
    Then, by the definition of $\cpow$, we derive that
    \begin{equation}\label{eq:morepowercase1}
        \begin{aligned}
            \cpow(S^*, \tau_\alpha^*) - \cpow(S', \tau') 
            = &\,
            \int \Big[
                \bbI \big\{ S^*(x) \le \tau_\alpha^* \big\} 
                -
                \bbI \big\{ S'(x) \le \tau' \big\} 
            \Big]
            \big\{1-S^*(x)\big\}
            \rmd \bP_X(x)
            \\ \ge &\, 
            - \int_{S'(x) \le \tau',S^*(x) > \tau_\alpha^*}
            \big\{1-S^*(x)\big\}
            \rmd \bP_X(x)
            =0.
        \end{aligned}
    \end{equation}

    \textbf{Case II: $\tau_\alpha^* > \alpha$.}
    Subtracting \eqref{al:expressaltermfcr} from \eqref{al:expressoptimalmfcr} yields that
    \begin{equation*}
        \begin{aligned}
            0 \le &\, \int \Big[
                \bbI \big\{ S^*(x) \le \tau_\alpha^* \big\} 
                -
                \bbI \big\{ S'(x) \le \tau' \big\} 
            \Big]
            \big\{S^*(x) - \alpha \big\}
            \rmd \bP_X(x)
            \\ = &\,
            \int_{S^*(x) \le \tau_\alpha^*, S'(x) > \tau'}
            \big\{S^*(x) - \alpha \big\}
            \rmd \bP_X(x)
            -
            \int_{S^*(x) > \tau_\alpha^*, S'(x) \le \tau'}
            \big\{S^*(x) - \alpha \big\}
            \rmd \bP_X(x).
        \end{aligned}
    \end{equation*}
    Provided that $(u-\alpha)/(1-u)$ is increasing with respect to $u$,
    it follows that
    \begin{equation*}
        \frac{S^*(x)-a}{1-S^*(x)}
        \begin{cases}
            \le \frac{\tau_\alpha^* - \alpha}{1 - \tau_\alpha^*}, & S^*(x) \le \tau^*_\alpha,
            \\ 
            > \frac{\tau_\alpha^* - \alpha}{1 - \tau_\alpha^*}, & S^*(x) > \tau^*_\alpha,
        \end{cases}
    \end{equation*}
    which further implies that (having accounted for those points where $S^*(x) = 1$):
    \begin{equation*}
        \begin{aligned}
            0 \le &\,
            \int_{S^*(x) \le \tau_\alpha^*, S'(x) > \tau'}
            \big\{S^*(x) - \alpha \big\}
            \rmd \bP_X(x)
            -
            \int_{S^*(x) > \tau_\alpha^*, S'(x) \le \tau'}
            \big\{S^*(x) - \alpha \big\}
            \rmd \bP_X(x) 
            \\ \le & \,
            \frac{\tau_\alpha^* - \alpha}{1 - \tau_\alpha^*}
            \bigg[ 
                \int_{S^*(x) \le \tau_\alpha^*, S'(x) > \tau'}
                \big\{1-S^*(x) \big\}
                \rmd \bP_X(x)
                -
                \int_{S^*(x) > \tau_\alpha^*, S'(x) \le \tau'}
                \big\{1-S^*(x) \big\}
                \rmd \bP_X(x) 
            \bigg]
            \\ = &\, 
            \cpow(S^*, \tau_\alpha^*) - \cpow(S', \tau').
        \end{aligned}
    \end{equation*}
   The desired result follows by combining the above with \eqref{eq:morepowercase1}. \qed

\subsection{Proof of Proposition~\ref{prop:finitesamplefcrpulearning}}

Let $\hk = |\cS|$ be the number of selected units.
    Then, FCR can be decomposed as 
    $\fcr = \sum_{j = 1}^m \E \{\bbI (p_j^\ps \le \alpha \hk/ m , H_{0,j}) / \hk\}$,
    where the expectation follows that
    \begin{equation}\label{eq:proposition6decomp1}
        \begin{aligned}
            \E \bigg\{ 
                \frac{
                    \bbI (p_j^\ps \le \alpha \hk/ m , H_{0,j})
                }{
                    \hk
                }
            \bigg\}
            = & \,
            \sum_{k = 1}^{m}
            \frac{1}{k} \times
            \pr \big\{ \hk = k, p_j^\ps \le \alpha k / m, H_{0,j} \big\}
            \\ = & \,
            \sum_{s = 0}^n \sum_{k = 1}^m
            \pr \big\{ \hk = k, p_j^\ps \le \alpha k / m, H_{0,j}, s_L = s \big\}
            \\ = & \,
            \sum_{s = 0}^n \sum_{k = 1}^m
            \pr \big\{ \hk = k \mid p_j^\ps \le \alpha k / m, H_{0,j}, s_L = s \big\} 
            \times 
            \\ & \, \quad \quad \quad \quad
            \pr \big\{ p_j^\ps \le \alpha k / m \mid  H_{0,j}, s_L = s \big\}
            \times 
            \pr \big\{ H_{0,j}, s_L = s \big\},
        \end{aligned}
    \end{equation}
    where the notation $s_L$ follows the definition in the proof of the third result of Lemma~\ref{lem:prdspvalue} (Section~\ref{ssec:prooftheprdspvalue}).
    Results in \eqref{eq:shrinkagesuperuniform} establish that 
    \begin{equation*}
        \pr \big\{ p_j^\ps \le \alpha k / m \mid  H_{0,j}, s_L = s \big\}
        \le 
        \frac{\alpha k }{m}
        \cdot 
        \frac{n+1}{n-s+1}.
    \end{equation*}
    In addition, provided that the random variables $\bbI\{Y_i \notin \cC(X_i)\}, i \in [n+m]$ are  
    independent Bernoulli distribution $B(1,\pi_0^*)$,
    it follows that 
    \begin{equation*}
        \pr \big\{ H_{0,j}, s_L = s \big\}
        = 
        \pi_0^* 
        \times 
        \binom{n}{s} \times
        (1-\pi_0^*)^s (\pi_0^*)^{n-s}
        = 
        \binom{n}{s} \times
        (1-\pi_0^*)^s (\pi_0^*)^{n-s+1}.
    \end{equation*}
    Moreover, the third PRDS result in Lemma~\ref{lem:prdspvalue} implies that
    \begin{equation*}
        \sum_{k = 1}^m 
        \pr \big\{ \hk = k \mid p_j^\ps \le \alpha k / m, H_{0,j}, s_L = s \big\} 
        \le 1.
    \end{equation*}
    Combining these three inequalities with \eqref{eq:proposition6decomp1} yields that
    \begin{equation*}
        \E \bigg\{ 
            \frac{
                \bbI (p_j^\ps \le \alpha \hk/ m , H_{0,j})
            }{
                \hk
            }
        \bigg\}
        \le 
        \frac{\alpha}{m}
        \sum_{s = 0}^n 
        \frac{n+1}{n-s+1} \times 
        \binom{n}{s} \times
        (1-\pi_0^*)^s (\pi_0^*)^{n-s+1}
        \le 
        \frac{\alpha}{m}.
    \end{equation*}
 Hence $\fcr = \sum_{j = 1}^m \E \{\bbI (p_j^\ps \le \alpha \hk/ m , H_{0,j}) / \hk\} \le \alpha$, completing the proof. \qed

\subsection{Proof of Auxiliary Lemmas}\label{ssec:proofofauxiliarylemmas}

\subsubsection{Proof of Lemma~\ref{lem:anticonservativepseudopvalue}}\label{asssec:proofofanticonservativepseudopvalue}

    Conditional on $H_{1,j} = \{ Y_{n+j} \in \cC(X_{n+j}) \}$, the oracle conformal p-value can be expressed as
    \begin{equation*}
        p_j^\cp (Y_{n+j}; \tT) \bigmid H_{1,j}
        = 
        \frac{
            \sum_{i = 1}^n \bbI (Y_i \notin \cC_i)
            + 
            U_j \{1 + \sum_{i = 1}^n \bbI (Y_i \in \cC_i) \}
        }{
            n+1
        }.
    \end{equation*}
    Consequently, 
    \begin{equation*}
        \begin{aligned}
            \pr \{H_{1,j}, p_j^\cp(Y_{n+j}; \tT) \le \alpha\}
            = 
            \pr \Big\{ 
                (1-U_j) \sum_{i = 1}^n \bbI (Y_i \notin \cC_i)
                \le 
                (\alpha - U_j) (n+1)
            \Big\}.
        \end{aligned}
    \end{equation*}
    
    As the count $\sum_{i = 1}^n \bbI (Y_i \notin \cC_i)$ is no less than $0$, 
    it suffices to analysis conditionally on $U_j \le \alpha$:
    \begin{equation*}%\label{eq:theorem2decomp1}
        \begin{aligned}
            &\, 
            \pr \{H_{1,j}, p_j^\cp(Y_{n+j}; \tT) \le \alpha\}
            \\ = &\,
            \pr (U_j \le \alpha)
            \times 
            \pr \Big\{ 
                (1-U_j) \sum_{i = 1}^n \bbI (Y_i \notin \cC_i)
                \le 
                (\alpha - U_j) (n+1)
                \bigmid 
                U_j \le \alpha
            \Big\}.
        \end{aligned}
    \end{equation*}
    Given that $U_j$ is an independent uniform variable, it follows that
    \begin{equation*}%\label{eq:theorem2decomp1}
        \begin{aligned}
            % &\, 
            \pr \{H_{1,j}, p_j^\cp(Y_{n+j}; \tT) \le \alpha\}
            = &\,
            \alpha
            \times 
            \pr \Big\{ 
                \sum_{i = 1}^n \bbI (Y_i \notin \cC_i)
                \le 
                \frac{(\alpha - U_j) (n+1)}{(1-U_j)} 
                \bigmid
                U_j \le \alpha
            \Big\}
            \\ \le &\,
            \alpha \times 
            \pr \Big\{ 
                \sum_{i = 1}^n \bbI (Y_i \notin \cC_i)
                \le 
                \alpha (n+1)
            \Big\}.
        \end{aligned}
    \end{equation*}

    Since $n \pi_0^* \ge \alpha (n+1)$,
    we have 
    \begin{equation*}
        \begin{aligned}
            \pr \Big\{ 
                \sum_{i = 1}^n \bbI (Y_i \notin \cC_i)
                \le 
                \alpha (n+1)
            \Big\}
            = &\,
            \pr \Big\{ 
                \sum_{i = 1}^n \bbI (Y_i \notin \cC_i)
                - 
                n \pi_0^*
                \le 
                \alpha (n+1)
                - 
                n \pi_0^*
            \Big\}
            \\ = &\,
            \pr \Big\{ 
                n \pi_0^*
                -
                \sum_{i = 1}^n \bbI (Y_i \notin \cC_i)
                \ge 
                n \pi_0^*
                -
                \alpha (n+1)
            \Big\}
            \\ \le & \, 
            \exp \{ -2n (\bar{\pi}_0^* - \alpha)^2 \},
        \end{aligned}
    \end{equation*}
    where the last step is due to applying the Hoeffding inequality \citep[Theorem 2.8]{boucheron2013concentration} to the \iid Bernoulli variables $\bbI\{Y_i \notin \cC(X_i)\}, i \in [n]$.

\subsubsection{Proof of Lemma~\ref{lem:prdspropertyhomo}}

    Similar to \eqref{eq:algebratildewj}, we define the algebra
    \begin{equation*}
        \tW_j^\homo = 
        \Big\{ 
            \{      
                \tT_1,\cdots,\tT_n,\tT_{n+j}
            \}_{\varsigma},
            \{
                T_{n+j'}, j' \in [m], j' \ne j
            \}
        \Big\}.
    \end{equation*}
    Due to the exchangeability, $\tT_{n+j}$ uniformly distributes over the unorded set
    $\{ \tT_1,\cdots,\tT_n,\tT_{n+j} \}_{\varsigma}$
    conditionally on $\tW_j^\homo$.
    Thus, we have $p_j^\cphm (Y_{n+j}; \tT) \sim \mbox{Unif }(0,1) $ and $p_j^\cphm (Y_{n+j}; \tT) \perp \tW_j^\homo$.

    Provided that the sequence $\bp^\pshm_{j \to \oc}$ is measurable with respect to $\{p_j^\cp(Y_{n+j};\tT), 
    W_j, U\}$ and $U$ is measurable with respect to $\{W_j,p_j^\cphm (Y_{n+j}; \tT)\}$,
    it follows that $\bp^\pshm_{j \to \oc}$ is measurable with respect to $\{p_j^\cp(Y_{n+j};\tT), 
    W_j\}$.
    Then, following the arguments in the proofs of fourth result in Proposition~\ref{prop:keypropertyconformalpvalue},
    we yields that
    $\pr \big\{ \bp^\pshm_{j \to \oc} \in D \mid p_j^\cphm (Y_{n+j}; \tT) = t, \tW_j \big\} $
    is non-decreasing with respect to $t$ for any increasing set $D$.
    Consequently, the proof of fifth result in Proposition~\ref{prop:keypropertyconformalpvalue}
    implies that
    $\pr \big\{ \bp^\pshm_{j \to \oc} \in D \mid p_j^\cphm (Y_{n+j}; \tT) = t \big\} $ is 
    non-decreasing with respect to $t$.

\subsubsection{Proof of Lemma~\ref{lem:convergedthreshold}}
\label{asssec:proveconvergedthreshold}

To begin with, we briefly introduce an alternative expression for $\htau_\bh^0$:
Given the definition of $\nu(x)$, we have $\tq(x; \cD^\ca_0) = (|\cD^\ca_0| + 1)^{-1} \big[ 1+ \sum_{i \in \cD^\ca_0} \bbI \{V(X_i, Y_i) \ge \nu(x)\} \big]$.
Then, for any $\tau \in [0,1]$, there exists $\nu_\tau$ such that $\{\tq(x; \cD^\ca_0) \le \tau \} = \{ \nu(x) > \nu_\tau \}$ and $\tau = (|\cD^\ca_0| + 1)^{-1} \big[ 1+ \sum_{i \in \cD^\ca_0} \bbI \{V(X_i, Y_i) > \nu_\tau \} \big]$.
With the above notation, we define 
\begin{equation*}
    \begin{aligned}
        \hnu_\bh^0 = 
        \min \bigg\{ &
            \nu \in \{V(X_i, Y_i), i \in \cD^\ca_0 \cup \cD^\ca \cup \cD^\te\}
            :
            \\ & \quad
            \frac{\sum_{i = 1}^{n+m} \bbI\{ \tnu(X_i) > \nu \} }{n+m}
            \ge 
            \frac{\sum_{i \in \cD^\ca_0 }  \bbI\{V(X_i, Y_i) > \nu\}  + 1}{|\cD^\ca_0| + 1} / \alpha
        \bigg\}.
    \end{aligned}
\end{equation*}
It can be verified that 
\begin{equation}\label{eq:equivalenceformtaubh}
    \htau_\bh^0 
    = 
    \frac{\sum_{i \in \cD^\ca_0 }  \bbI\{V(X_i, Y_i) > \hnu_\bh^0 \}  + 1}{|\cD^\ca_0| + 1}.
\end{equation}

Now we turn back to the proof of Lemma~\ref{lem:convergedthreshold}.
    First, we introduce the uniformly convergence property of empirical distributed function, which is given by 
    \begin{equation}\label{eq:uniformcovergence}
        \begin{aligned}
            \sup_{t \in \bbR} 
            \bigg|
                \frac{1}{n+m} \sum_{i = 1}^{n+m}
                \bbI \big\{ \nu(X_i) > t \big\}
                - 
                \pr \big\{ \nu(X_i) > t \big\}
            \bigg| 
            \overset{a.s.}{\longrightarrow}
            0,
            \\ 
            \sup_{t \in \bbR} 
            \bigg|
                \frac{1}{|\cD^\ca_0| + 1} 
                \bigg[
                    \sum_{i \in \cD^\ca_0 }
                    \bbI \big\{ V(X_i, Y_i) > t \big\}+ 1
                \bigg]
                - 
                \pr \big\{ V(X_i, Y_i) > t \big\}
            \bigg| 
            \overset{a.s.}{\longrightarrow}
            0.
        \end{aligned}
    \end{equation}
    
    \noindent (a) \textbf{When the first condition of Assumption~\ref{ass:smoothnesscondition} holds}, the uniform convergence in~\eqref{eq:uniformcovergence} implies that, there exists $N > 0$ such that $
    \forall \, n,m,|\cD^\ca_0| \ge N,$, it follows that 
    \begin{equation*} 
        \frac{
            \sum_{i = 1}^{n+m}
            \bbI \big\{ \nu(X_i) > v^* \big\}
        }{n+m} 
        \ge 
        \frac{
            \sum_{i \in \cD^\ca_0 }
            \bbI \big\{ V(X_i, Y_i) > v^* \big\}+ 1
        }{|\cD^\ca_0| + 1} 
        / \alpha
    \end{equation*}
    almost surely.
    Thus, we have $\lim\sup \hnu_\bh^0 \le \nu^*$, which implies that 
    \begin{equation*}
        \begin{aligned}
            \underset{n,m,|\cD^\ca_0| \to \infty}{\lim \inf} 
            \htau_\bh^0 
            = & \, 
            \underset{n,m,|\cD^\ca_0| \to \infty}{\lim \inf}  
            \frac{\sum_{i \in \cD^\ca_0 }  \bbI\{V(X_i, Y_i) > \hnu_\bh^0 \}  + 1}{|\cD^\ca_0| + 1}
            \\ \ge & \,
            \underset{n,m,|\cD^\ca_0| \to \infty}{\lim \inf}  
            \frac{\sum_{i \in \cD^\ca_0 }  \bbI\{V(X_i, Y_i) > \nu^* \}  + 1}{|\cD^\ca_0| + 1}
            \\ = & \, 
            \pr \{V(X_i, Y_i) > v^*\}
            > 0
        \end{aligned}
    \end{equation*}
    holds almost surely. 
    Here, the first equality is due to the equivalence shown in~\eqref{eq:equivalenceformtaubh}.

    \noindent (b) \textbf{When the second condition of Assumption~\ref{ass:smoothnesscondition} holds}, the uniform convergence in~\eqref{eq:uniformcovergence} implies that, there exists $N > 0$ such that when $\, n,m,|\cD^\ca_0| \ge N$, it follows that 
    \begin{equation}\label{eq:forconvergetozero} 
        \frac{
            \sum_{i = 1}^{n+m}
            \bbI \big\{ \nu(X_i) > v^* \big\}
        }{n+m} 
        \le 
        \frac{
            \sum_{i \in \cD^\ca_0 }
            \bbI \big\{ V(X_i, Y_i) > v^* \big\}+ 1
        }{|\cD^\ca_0| + 1} 
        / \alpha
        - 
        \frac{\epsilon}{2}
    \end{equation}
    for any $\nu \in  \big\{\nu: \pr \{V(X,Y) \ge \nu\} > 0 \big\}$ almost surely.
    
    Let $\bar{\nu} = \sup \big\{\nu: \pr \{V(X,Y) \ge \nu\} > 0 \big\}$.
    Then, inequality~\eqref{eq:forconvergetozero} implies that 
    $\hnu_\bh^0 > \bar{\nu}$ when $\, n,m,|\cD^\ca_0| \ge N$ almost surely.
    Consequently, we have 
    \begin{equation*}
        \begin{aligned}
            \underset{n,m,|\cD^\ca_0| \to \infty}{\lim \sup} 
            \htau_\bh^0 
            = & \, 
            \underset{n,m,|\cD^\ca_0| \to \infty}{\lim \sup} 
            \frac{\sum_{i \in \cD^\ca_0 }  \bbI\{V(X_i, Y_i) > \hnu_\bh^0 \}  + 1}{|\cD^\ca_0| + 1}
            \\ \le & \, 
            \underset{n,m,|\cD^\ca_0| \to \infty}{\lim \sup} 
            \frac{\sum_{i \in \cD^\ca_0 }  \bbI\{V(X_i, Y_i) > \bar{\nu} \}  + 1}{|\cD^\ca_0| + 1} = 0
        \end{aligned}
    \end{equation*}
    almost surely, completing the proof. \qed

\section{Additional Details about Numerical Results}\label{asec:additionalnumericalresults}

\subsection{Implementation Details of the Illustrative Example}
\label{assec:detailillustrativeexample}

In this section, we provide implementation details and additional results for the illustrative example in Section~\ref{sec:illustrativeexample}.

Specifically, we consider an image classification task on the CIFAR-10 dataset with the goal of distinguishing among three classes: \emph{Vehicle} (`1'), \emph{Aircraft} (`2'), and \emph{Bird} (`3').
Restricting to these three classes yields $n_{\text{tot}}=15{,}000$ images, of which we randomly select $20\%$ for training a lightweight convolutional neural network. 
The softmax scores are denoted $\hmu_k(x) = \hpr(Y = k \mid X = x)$ for $k \in [K]$.
We randomly sample $n = 6000$ images for calibration and $m = 30$ images for testing.
Throughout this example, the target error rate is $\alpha = 0.1$.

For implementing the standard split conformal prediction method (cf. \citealp{angelopoulos2024theoretical}), let $V(x,y) = 1 - \hmu_y (x)$ be the non-conformity score and $V_i = V(X_i, Y_i)$ for $ i \in [n]$.
Then, the conformal prediction set for the $j$-th test unit is given by 
\begin{equation*}
    \cC_{n+j}^{\cp}
    := 
    \cC^{\cp, \alpha} (X_{n+j}; \{(X_i, Y_i)\}_{i = 1}^{n}, V)
    =\bigg\{
        y \in [K]: 
        \frac{
            1 + \sum_{i = 1}^n 
            \bbI \big\{ 
                V_i \ge V(X_{n+j},y) 
            \big\}
        }{n+1} \ge \alpha
    \bigg\}.
\end{equation*}

The \texttt{JOMI} procedure \citep{jin2023selection} can be used to correct selection bias using the \emph{reference set}:
Let $\cS = \{j: |\cC_{n+j}^\cp| \le 2\}$ be index set of units selected by naive strategy.
For any selected unit $j \in \cS$ and hypothesized value $y \in [3]$, the reference set collects the calibration units $i \in [n]$ that are selected after swapping with the hypothesized test point $(X_{n+j},y)$. Formally, we define
\begin{equation*}
    \cD^{\rm \scriptscriptstyle Ref} (j ,y)=
    \Big\{i \in n:
        \big| \cC^{\cp, \alpha}(X_i; \{(X_{i'},Y_{i'})\}_{i' \ne i} \cup \{(X_{n+j}, y)\}, V) \big| \le 2 
    \Big\}.
\end{equation*}
Then, the adjusted prediction set for $j$-th test unit is given by
\begin{equation*}
    \cC^{\jomi}_{n+j}
    = 
    \bigg\{ 
        y \in [K]:
        \frac{
            \sum_{i \in \cD^{\rm \scriptscriptstyle Ref}(j , y)}
            \bbI \big\{ V_i \ge V(X_{n+j},y )\big\}
        }{
            |\cD^{\rm \scriptscriptstyle Ref}(j , y)| + 1
        }
        \ge \alpha
    \bigg\}.
\end{equation*}

The \infosp procedure \citep{gazin2025selecting} first computes $\cI$-adjusted $p$-value for each test unit, denoted by 
\begin{equation*}
    \begin{aligned}
        \tq_{n+j} 
        = & \, 
        \tq (X_{n+j}; \{(X_i,Y_i) \}_{i = 1}^{n}) 
        \\ = & \, 
        \inf \big\{
            q\in(0,1] : |\cC^{\cp, q} (X_{n+j}; \{(X_i, Y_i)\}_{i = 1}^{n}, V)| \le 2
        \big\},
    \end{aligned}
    \qquad 
    j \in [m].
\end{equation*}
We then apply $\alpha$-level \bhfdr procedure to the sequence of $\cI$-adjusted $p$-values to derive a threshold $\htau_{\bh}$.
The prediction sets reported by \infosp are given by $\big\{ \cC^{\cp, \htau_\bh} (X_{n+j}; \{(X_i, Y_i)\}_{i = 1}^{n}, V) : j \in [m], \tq_{n+j} \le \htau_{\bh}\big\}$.

Finally, we repeatedly resample the calibration and test sets and report the averages over 100 repetitions in Table~\ref{tab:cifar10compareparadigm}.
The results are consistent with those shown in the introduction: the naive selection meets the informativeness constraint but yields an inflated error rate; \infosp attains both informativeness and trustworthiness, yet exhibits lower statistical power than \infospp.

\begin{table}[!htbp]
    \centering
    \caption{
        Comparison of naive selection, \infosp, and \infospp.
        Average of two metrics over 100 iterations are reported.
        \textbf{cPOW} (counting power) is the number of reported prediction sets.
    }
    \setlength{\tabcolsep}{7pt}
    \renewcommand{\arraystretch}{1.15}
    \small
    
    \begin{tabular}{lcc}
        \toprule
        Method & FCR (\%) & cPOW  \\
        \midrule
    
        \texttt{Naive Selection}
        & 11.77 & 23.16 \\
    
        \infosp
        & 9.23 & 16.42 \\
    
        \infospp
        & 9.54 & 22.86 \\
    
        \bottomrule
    \end{tabular}

    \label{tab:cifar10compareparadigm}
\end{table}

\subsection{Algorithm descriptions}

We summarize \scip and \infospp in Algorithms~\ref{alg:baseprocedure} and~\ref{alg:infospp}, respectively.

\begin{algorithm}[!ht]
	%\textsl{}\setstretch{1.2}
	\renewcommand{\algorithmicrequire}{\textbf{Input:}}
	\renewcommand{\algorithmicensure}{\textbf{Output:}}
	\caption{\scip: selective conformal inference with informative predictions}
	\label{alg:baseprocedure}
	\begin{algorithmic}[1]
        \REQUIRE 
        Calibration data $\cD^\ca = \{(X_i,Y_i), i \in [n]\}$;
        Test data $\cD^\te = \{X_{n+j}, j \in [m]\}$;
        Informative set constructor $\cC^\cI (\cdot)$;
        Trust score $T(\cdot,\cC^\cI (\cdot))$;
        Target FCR level~$\alpha$.
        \STATE
        Construct informative sets $\cC_i^\cI = \cC^\cI (X_i)$ for $i \in [n+m]$;
        \STATE 
        Calculate $p$-values $p_j^\ps$ as in \eqref{eq:pseudopvalue} for $j \in [m]$;
        \STATE 
        Calculate threshold $\hat{\alpha}$ by applying \bhfdr procedure on $\{p_j^\ps, j \in [m]\}$ as in \eqref{eq:qbh};
        \ENSURE
        The reported sets
        $\cR = \{\cC^\cI_{n+j}\colon j \in [m], p_j^\ps \le \hat{\alpha}\}$.
	\end{algorithmic}  
\end{algorithm}

\begin{algorithm}[!ht]
	%\textsl{}\setstretch{1.2}
	\renewcommand{\algorithmicrequire}{\textbf{Input:}}
	\renewcommand{\algorithmicensure}{\textbf{Output:}}
	\caption{\infospp: An instance of \scip based on \infosp.}
	\label{alg:infospp}
	\begin{algorithmic}[1]
        \REQUIRE 
        Non-conformity score $V(\cdot, \cdot)$
        % Conformal prediction set function $\cC^{\cp,q}(\cdot;\cdot)$;
        Calibration data $\cD^\ca$;
        Test data $\cD^\te$;
        Additional calibration data $\cD^\ca_0$;
        Target FCR level $\alpha$.
        \STATE 
        Calculate the $\cI$-adjusted p-values $\tq_i^0 = \tq(X_i; \cD^\ca_0)$ for $i \in [n+m]$;
        \STATE 
        Calculate the threshold $\htau_\bh^0$ by applying $\alpha$-level \bhfdr procedure to $\{\tq_i^0, i \in [n+m]\}$;
        \STATE 
        Calculate the truncated $\cI$-adjusted p-values $\tq_i^+ = \max \{\htau_\bh^0, \tq_i^0\}$ for $i \in [n+m]$;
        \STATE 
        Construct the informative sets $\cC_i^\cI = \cC^{\cp,\tq_i^+} (X_i; \cD^\ca_0, V)$ for $i \in [n+m]$;
        \STATE 
        Calculate the trust scores $T_i = 1-\tq_i^+$ for $i \in [n+m]$;
        \STATE 
        Calculate conformal p-values $p_j^\ps$ as in \eqref{eq:pseudopvalue}, for $j \in [m]$;
        \STATE 
        Calculate $\hat{\alpha}$ by applying $\alpha$-level \bhfdr procedure to $\{p_j^\ps, j \in [m]\}$ as in \eqref{eq:qbh};
        \ENSURE
        The reported informative conformal prediction sets
        $\{\cC_{n+j}^\cI \colon p_j^\ps \le \hat{\alpha}\}$.
	\end{algorithmic}  
\end{algorithm}

% \end{appendices}

\end{document}